\newtheorem{thm}{Theorem}
\newtheorem{lem}[thm]{Lemma}
\newtheorem{prop}[thm]{Proposition}
\theoremstyle{remark}
\newtheorem{rmk}[thm]{Remark}
\newtheorem{example}[thm]{Example}
\numberwithin{equation}{section}
\theoremstyle{definition}
\newtheorem{defi}[thm]{Definition}
\numberwithin{thm}{section} 
\numberwithin{equation}{section}
\newcommand{\Rmnum}[1]{\expandafter\@slowromancap\romannumeral #1@}
\def\R{{\mathbb R}}
\def\N{{\mathcal N}}
\def\H{{\mathbb H}}
\def\A{{\mathcal A}}
\newcommand{\pO}{\partial\Omega}
\newcommand{\Oba}{\overline{\Omega}}
\newcommand{\vep}{\varepsilon}
\newcommand{\ul}{\underline}
\newcommand{\bpm}{\begin{pmatrix}}
\newcommand{\epm}{\end{pmatrix}}
\newcommand{\la}{\left\langle}
\newcommand{\ra}{\right\rangle}
\newcommand{\beq}{\begin{equation}}
\newcommand{\eeq}{\end{equation}}
\newcommand{\coh}{\operatorname{co}}
\newcommand{\spa}{\operatorname{span}}
\DeclareMathOperator*{\limsups}{limsup^\ast}
\title[H-quasiconvex envelope]{\protect{Horizontally quasiconvex envelope\\ in the Heisenberg group}}
\author[A. Kijowski]{Antoni Kijowski}
\address[Antoni Kijowski]{Analysis on Metric Spaces Unit, Okinawa Institute of Science and Technology Graduate University, Okinawa 904-0495, Japan, {\tt antoni.kijowski@oist.jp}}
\author[Q. Liu]{Qing Liu}
\address[Qing Liu]{Geometric Partial Differential Equations Unit, Okinawa Institute of Science and Technology Graduate University, Okinawa 904-0495, Japan, {\tt qing.liu@oist.jp}}
\author[X. Zhou]{Xiaodan Zhou}
\address[Xiaodan Zhou]{Analysis on Metric Spaces Unit, Okinawa Institute of Science and Technology Graduate University, Okinawa 904-0495, Japan, {\tt xiaodan.zhou@oist.jp}}
\begin{document}

\begin{abstract}
This paper is concerned with a PDE-based approach to the horizontally quasiconvex (h-quasiconvex for short) envelope of a given continuous function in the Heisenberg group. We provide a characterization for upper semicontinuous, h-quasiconvex functions in terms of the viscosity subsolution to a first-order nonlocal Hamilton-Jacobi equation. 
We also construct the corresponding envelope of a continuous function by iterating the nonlocal operator. 
One important step in our arguments is to prove the uniqueness and existence of viscosity solutions to the Dirichlet boundary problem for the nonlocal Hamilton-Jacobi equation.  Applications of our approach to the h-convex hull of a given set in the Heisenberg group are discussed as well. 
\end{abstract}

\subjclass[2020]{35R03, 35D40, 26B25, 52A30}
\keywords{Heisenberg group, h-quasiconvex functions, h-convex sets, Hamilton-Jacobi equations, viscosity solutions}

\maketitle

\section{Introduction}\label{sec:intro}

\subsection{Background and motivation}
Convex analysis is a classical and fundamental topic with numerous applications in various fields of mathematics and beyond. In contrast to the extensive literature on convex analysis in the Euclidean space, less is known about the case in a general geometric setting such as sub-Riemannian manifolds. This paper is mainly concerned with a PDE method to deal with a certain weak type of convexity for both sets and functions in the first Heisenberg group $\H$. 

The Heisenberg group $\mathbb{H}$ is $\mathbb{R}^{3}$ endowed with the non-commutative group multiplication 
\[
(x_p, y_p, z_p)\cdot (x_q, y_q, z_q)=\left(x_p+x_q, y_p+y_q, z_p+z_q+\frac{1}{2}(x_py_q-x_qy_p)\right),
\]
for all $p=(x_p, y_p, z_p)$ and $q=(x_q, y_q, z_q)$ in $\H$. The differential structure of $\mathbb{H}$ is determined by the left-invariant vector fields
\[
X_1=\frac{\partial}{\partial x}-\frac{y}{2}\frac{\partial}{\partial z}, \quad  X_2=\frac{\partial}{\partial y}+\frac{x}{2}\frac{\partial}{\partial z}, \quad X_3=\frac{\partial}{\partial z}.
\]
  One may easily verify the commutation relation $X_3=[X_1, X_2]=
  X_{1}X_{2}- X_{2}X_{1}$. Let
\[
\mathbb{H}_0=\{h\in \mathbb{H}: h=(x, y, 0) \ \text{ for $x, y\in \mathbb{R}$}\}.
\]
For any $p\in \H$, the set
\[
\H_p=\{p\cdot h: \ h\in \H_0\}
\]
is called the horizontal plane through $p$. It is clear that $\H_p=\spa\{X_1(p), X_2(p)\}$ for every $p\in \H$. See \cite{CDPT} for a detailed introduction of the Heisenberg group.

Our primary interest is to understand how to convexify a given bounded set in the Heisenberg group $\H$, that is, we aim to find the smallest convex set that contains the given set. Let us first clarify the meaning of convex sets we consider in this work. We shall actually focus on the notion of weakly h-convex sets, which is first introduced in \cite{DGN1} and later studied in \cite{Rithesis, CCP1, ArCaMo} etc. A set $E\subset \H$ is said to be weakly h-convex if the horizontal segment connecting any two points in $E$ lies in $E$; see also Definition \ref{def h-set}. Hereafter we call such a set an h-convex set for simplicity of terminology. 

There are several other types of set convexity in $\H$ defined with different kinds of convex combination of two points. One seemingly natural notion is based on geodesics in $\H$.
A set $E\subset \H$ is said to be geodetically convex if, for every pair of points $p, q\in E$, the set $E$ contains all geodesics joining $p$ and $q$. 
This notion is known to be a very strong one; the geodetically convex hull of any three points that are not on the same geodesic in $\H$ has to be the whole group \cite{MoR}.  A different notion, called strong h-convexity \cite{DGN1} or twisted convexity \cite{CCP1}, uses the dilation of group quotient to combine two points. It is still a quite strong notion, much stronger than the Euclidean convexity. In general a strongly h-convex hull of a bounded set consisting of more than two points could be unbounded \cite{CCP1}.  
We refer the reader to \cite{Rithesis, CCP1} for related discussions on these convexity notions. 

The notion of (weak) h-convexity is obviously weaker than the Euclidean convexity as well as the other aforementioned notions because of the restriction on horizontal segments. Such relaxation gives rise to unexpected properties. An h-convex set can even be disconnected, as pointed out in \cite{CCP2}. One simple example of h-convex sets is the union of two points $(0, 0, 1)$ and $(0, 0, -1)$ in $\H$. It is h-convex, because no horizontal segments exist to connect the points. 
A less trivial example of disconnected h-convex sets is presented in Example \ref{ex1}. 
Such an unusual character makes it challenging to find h-convex hull of a given set in $\H$. In contrast to the Euclidean situation, where one can generate the convex hull of a set by simply connecting every pair of points in the set with a segment, building the h-convex hull of a given set in the Heisenberg group requires possibly infinite iterations to include all necessary horizontal segments; see the proof of \cite[Lemma 4.1]{Riconv} by Rickly about this method. 
In general, it is not straightforward to describe and construct the h-convex hull of a set.  This motivates us to search possible analytic methods to solve this problem.

A closely related problem we also intend to discuss is constructing the horizontally quasiconvex (or simply h-quasiconvex) envelope of a given function in an h-convex domain $\Omega \subset \H$. An h-quasiconvex function in $\Omega$ is defined to be a function whose sublevel sets are all h-convex. It is equivalent to saying that 
\beq\label{h-quasi def}
u(w) \leq \max\{u(p),u(q)\}
\eeq
holds for any $p\in \Omega$, $q \in \H_p\cap \Omega$ and $w \in [p,q]$.
This notion is introduced in \cite{SuYa}. It is also studied in \cite{CCP2}, where such functions are called weakly h-quasiconvex functions. We again suppress the term ``weakly'' in this paper to avoid redundancy. Similar to the case of sets, for any function in $\Omega$, it is not trivial in general how one can find its h-quasiconvex envelope in a constructive way. 

We remark that a stronger notion of function convexity in $\H$, called horizontal convexity, is introduced by \cite{DGN1} and \cite{LMS}. Various properties and generalizations of such convex functions are discussed in \cite{BaRi, GuMo2, Wa, Mag, CaPi, BaDr, MaSc} etc. The corresponding convex envelope and its applications to convexity properties of sub-elliptic equations are studied in \cite{LZ2}.

\subsection{Main results}
Inspired by the Euclidean results in \cite{BGJ1}, in this work we provide a PDE-based approach to investigate h-convex hulls and h-quasiconvex envelopes. Our study starts from an improved characterization of h-quasiconvex functions. It is known \cite[Theorem 4.5]{CCP2} that any function $u\in C^1(\Omega)$ is h-quasiconvex if and only if 
\begin{equation}\label{USC}
 u(\xi) < u(p) \Rightarrow \langle \nabla_H u(p), (p^{-1}\cdot \xi)_h \rangle \leq 0
\end{equation}
for any $\xi \in \H_p\cap \Omega$. Here, $\nabla_H u$ denotes the horizontal gradient of $u$, given by $\nabla_H u=(X_1 u, X_2u)$. Also, for each $\zeta=(x_\zeta, y_\zeta, z_\zeta)\in \H$, $\zeta_h$ represents its horizontal coordinates, that is, $\zeta_h=(x_\zeta, y_\zeta)$. For the sake of our applications to construct h-convex hulls we generalize this characterization to fuctions which are not necessarily of $C^1$ class. Extending the Euclidean arguments in \cite{BGJ1} to $\H$, we show in Theorem \ref{thm char} that an upper semicontinuous (USC) function $u$ is h-quasiconvex if and only if \eqref{USC} holds in the viscosity sense. It is equivalent to saying that 
\beq\label{vis char}
\sup\{\la \nabla_H \varphi(p), (p^{-1}\cdot \xi)_h\ra: \xi\in \H_p\cap \Omega, \ u(\xi)< u(p)\}\leq 0
\eeq
whenever there exist $p\in \Omega$ and $\varphi\in C^1(\Omega)$ such that $u-\varphi$ attains a maximum at $p$.

Further developing the generalized characterization, we adopt an iterative scheme to find the h-quasiconvex envelope of a continuous function $f$, denoted by $Q(f)$, in a bounded h-convex domain $\Omega$ under a particular Dirichlet boundary condition. The iteration is implemented by solving a sequence of nonlocal Hamilton-Jacobi equations, where the Hamiltonian is given by the left hand side of \eqref{vis char}, that is, 
\[
H(p, u(p), \nabla u(p))=\sup\{\la \nabla_H u(p), (p^{-1}\cdot \xi)_h\ra: \xi\in \H_p\cap \Omega, \ u(\xi)< u(p)\}.
\]
We briefly describe our scheme in what follows. 

Let $f\in C(\Oba)$ be a given function satisfying
\begin{numcases}{}
f=K \quad &\text{on $\pO$,}\label{dirichlet-f} \\
f\leq K \quad &\text{in $\Oba$}\label{coercive-f}
\end{numcases}
with $K\in \R$. This set of conditions resembles the coercivity assumption on $f$ when $K>0$ is taken large. It guarantees the existence of an h-quasiconvex function $\ul{f}\in C(\Oba)$ such that 
$\ul{f}\leq f$ in $\Oba$ and $\ul{f}=K$ on $\partial \Omega$; see Proposition \ref{prop lower}.
This in turn implies the existence of $Q(f)$ taking the same boundary value. 

We set $u_0=f$ in $\Omega$ and take $u_n$ ($n=1, 2, \ldots$) to be the unique viscosity solution of 
\beq\label{iteration eq}
u_n+H(p, u_n, \nabla_H u_n)=u_{n-1}  \quad \text{in $\Omega$}
\eeq
satisfying the same set of conditions as in \eqref{dirichlet-f}--\eqref{coercive-f}, that is, 
\begin{numcases}{}
u_n=K \quad &\text{on $\pO$,}\label{data bdry} \\
u_n\leq K \quad &\text{in $\Oba$.}\label{data bdry ineq}
\end{numcases}

It turns out that $u_n$ is a nonincreasing sequence and converges uniformly to $Q(f)$ as $n\to \infty$. This is in fact our main theorem. 

\begin{thm}[Iterative scheme for envelope]\label{thm scheme bdry}
Suppose that $\Omega$ is a bounded h-convex domain in $\H$ and $f\in C(\Oba)$ satisfies \eqref{dirichlet-f}--\eqref{coercive-f} for some $K\in \R$. 
Let $Q(f)\in USC(\Omega)$ be the h-quasiconvex envelope of $f$ in $\Omega$. Let $u_0=f$ in $\Omega$ and $u_n$ be the unique solution of \eqref{iteration eq} satisfying \eqref{data bdry}\eqref{data bdry ineq} for $n\geq 1$. Then $u_n\to Q(f)$ uniformly in $\Oba$ as $n\to \infty$. 
\end{thm}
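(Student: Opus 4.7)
My plan is to establish Theorem~\ref{thm scheme bdry} in four stages: monotonicity of the iterates, a uniform lower bound by $Q(f)$, identification of the pointwise limit as an h-quasiconvex function, and upgrading pointwise convergence to uniform convergence on $\overline{\Omega}$.

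First, I would show by induction that $u_0\geq u_1\geq u_2\geq\cdots$. The key observation is that whenever the admissible set in the definition of $H(p,v,\nabla_H v)$ is nonempty, one may let $\xi$ approach $p$ along admissible directions to see that $H\geq 0$ in the viscosity sense at the relevant test points. Consequently $u_{n-1}$ is a viscosity supersolution of $u+H(p,u,\nabla_H u)=u_{n-1}$ with boundary value $K$, and the comparison principle for this Dirichlet problem (established earlier in the paper) gives $u_n\leq u_{n-1}$.

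Next, I would show $u_n\geq Q(f)$ for all $n$ by induction. The base case $u_0=f\geq Q(f)$ is immediate. For the inductive step, the USC h-quasiconvex envelope $Q(f)$ satisfies $H(p,Q(f),\nabla_H Q(f))\leq 0$ in the viscosity sense by the characterization in Theorem~\ref{thm char}. Combined with $Q(f)\leq u_{n-1}$ and the matching boundary value $K$, this makes $Q(f)$ a viscosity subsolution of $u+H=u_{n-1}$, and comparison gives $Q(f)\leq u_n$.

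Combining these, the sequence decreases pointwise to some limit $u_\infty\geq Q(f)$ satisfying $u_\infty\leq f$. To conclude $u_\infty=Q(f)$, it suffices to show that $u_\infty$ is h-quasiconvex: passing to the limit in the equation for $u_n$ and using $u_{n-1}-u_n\to 0$, one expects $H(p,u_\infty,\nabla_H u_\infty)\leq 0$, which by Theorem~\ref{thm char} yields the h-quasiconvexity. Rigorously this is a stability statement for the nonlocal Hamilton-Jacobi equation under a monotone limit, carried out via upper and lower half-relaxed semilimits. Once h-quasiconvexity is in hand, $u_\infty\leq f$ together with the defining property of the envelope gives $u_\infty\leq Q(f)$, hence equality. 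Finally, to upgrade to uniform convergence on $\overline{\Omega}$ I would apply Dini's theorem, after establishing continuity of $Q(f)$ up to the boundary by squeezing $Q(f)$ between the barrier $\underline{f}$ furnished by Proposition~\ref{prop lower} and $f$ itself.

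The principal obstacle is the stability of the nonlocal Hamiltonian under the monotone limit $u_n\downarrow u_\infty$: since the constraint $u(\xi)<u(p)$ is strict, the feasible set over which the supremum is taken varies discontinuously in $u$, so the standard viscosity stability arguments for local equations do not transfer directly. The monotonicity of $(u_n)$ together with a careful analysis of USC/LSC envelopes of the limit should nevertheless be enough to push the relevant inequalities through.
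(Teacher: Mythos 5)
Your overall architecture matches the paper's: monotonicity of $(u_n)$ (the paper gets $u_n\leq u_{n-1}$ directly from Lemma \ref{lem sub}, while you route it through comparison against $u_{n-1}$ viewed as a supersolution --- both work, since $\hat S_p$ always contains $p$ so the supersolution Hamiltonian is nonnegative), the lower bound $Q(f)\leq u_n$ by comparison because $Q(f)$ is a subsolution of each iterate equation with the matching boundary value $K$, identification of the decreasing limit as h-quasiconvex by a stability argument using that a monotone limit coincides with the upper relaxed limit, and Dini's theorem at the end. The stability step you flag as the ``principal obstacle'' is exactly Proposition \ref{prop sub-stability}, and your proposed mechanism is the right one: for $\xi_0$ with $u(\xi_0)<u(p_0)$, monotone convergence gives $u_n(\xi_0)\downarrow u(\xi_0)<u(p_0)=\lim_n u_n(p_n)$, so $\xi_0$ becomes admissible for $u_n$ at $p_n$ for large $n$ despite the strict inequality in the constraint. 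You leave this as a promissory note, but the strategy is sound.

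The genuine gap is in the final step. Dini's theorem requires the pointwise limit $Q(f)$ to be continuous on all of $\Oba$, not merely up to the boundary. Squeezing $Q(f)$ between $\ul{f}$ and $f$ only yields continuity at points where $\ul{f}=f$, i.e.\ on $\partial\Omega$; in the interior $\ul{f}<f$ in general and the squeeze gives nothing, while a priori $Q(f)$ is only upper semicontinuous. This interior continuity is precisely the nontrivial part, and the paper devotes the second half of its proof to it: each $u_n$ is extended by the constant $K$ to all of $\H$, the extension is verified to remain a solution (the subsolution property at points of $\partial\Omega$ is checked using the h-quasiconvex barrier $\ul{f}$ extended by $K$), and then $u_n$ is compared with its left translates $p\mapsto u_n(h\cdot p)-\omega_f(|h|_G)$ in an enlarged h-convex domain, propagating the modulus of continuity $\omega_f$ of $f$ through the iteration by induction on $n$. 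This gives equicontinuity of $(u_n)$ with the fixed modulus $\omega_f$, hence continuity of $Q(f)=\inf_{n\geq 1}u_n$ on $\Oba$, after which Dini applies. Without an argument of this kind (or an independent proof that $Q(f)\in C(\Oba)$), your proposal only delivers pointwise convergence.
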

Such type of nonlocal schemes is proposed in \cite{BGJ1} in the Euclidean case for general Dirichlet data. We remark that in the Euclidean space a similar class of nonlocal equations depending on the level sets of the unknown is also studied for applications in geometric evolutions and front propagation \cite{Car1, Car2, Sl, KLM}. 
Although our PDE looks analogous to theirs, the well-posedness in the sub-Riemannian case is not straightforward at all. The main difference from the Euclidean case lies in an additional constraint that requires $\xi\in \H_p\cap \Omega$, which depends on the space variable $p$. This extra constraint brings us much difficulty in proving the comparison principle for \eqref{iteration eq}. It is the coercivity-like setting \eqref{data bdry}\eqref{data bdry ineq} that enables us to overcome the difficulty and obtain the uniqueness of solutions. More details can be found in Section \ref{sec:comp}. 

The existence of viscosity solutions, on the other hand, can be handled in a standard way by adapting Perron's method \cite{CIL}. Since the existence in the Euclidean case is not explicitly discussed in \cite{BGJ1}, we give full details of the arguments for our sub-Riemannian version in Section \ref{sec:exist}. Once the sequence $u_n$ is determined iteratively for all $n=1, 2, \ldots$, Theorem \ref{thm scheme bdry} can be proved by applying a stability argument for viscosity solutions.

It is worth mentioning that, instead of adopting the restrictive setting \eqref{data bdry}\eqref{data bdry ineq}, one can solve \eqref{iteration eq} with general boundary data  and obtain the scheme convergence as in Theorem \ref{thm scheme bdry} if there exists an appropriate h-quasiconvex barrier $\ul{f}$ from below compatible with the boundary value; see Theorem \ref{thm existence2} and Remark \ref{rmk convergence2}. 

Another possible modification of the scheme is to consider the maximal  subsolution of \eqref{iteration eq} rather than its solutions at each step. Although we only get pointwise convergence of the scheme in this case, it allows us to avoid the uniqueness issue and to construct the h-quasiconvex envelopes for a general class of upper semicontinuous functions, even in an unbounded domain $\Omega$. See Theorem \ref{thm scheme} for results in this relaxed setting. 

We would like to point out that, besides the PDE-based approach described above, there is a more direct constructive method to build $Q(f)$, which employs the following convexification operator:
 \beq\label{direct conv}
T[f](w) = \inf  \left\{ \max \{ f(p), f(q)\}: w \in [p, q], \ p \in \Omega,\ q \in \Omega \cap \H_p \right\}, \   \text{for }w\in \Omega.
\eeq
It turns out that $T[f]$ itself may not be h-quasiconvex in $\Omega$ but iterated application of $T$ yields a pointwise approximation of the h-quasiconvex envelope $Q(f)$; see Theorem \ref{thm direct} for details. A similar idea is used in the proof of \cite[Lemma 4.1]{Riconv} for h-convex functions. It is also adopted in \cite{LZ2} to construct the h-convex envelope of a given function $f$. 

As an application of our constructive methods for h-quasiconvex envelopes, we study the h-convex hull of a given bounded set in $\H$. A key ingredient is the so-called level set formulation, which plays an important role in the study of geometric evolutions \cite{CGG, ES1, Gbook}. We can apply the same idea to our problem, since the nonlocal Hamiltonian is actually a geometric operator, homogeneous in $u$. 
Suppose that $E$ is a bounded open set in $\H$. Take a bounded h-convex domain $\Omega$ such that $E\subset \Omega$. We next choose a defining function $f\in C(\Oba)$ such that 
\beq\label{initial defining func}
E=\{p\in \Omega: f(p)<0\}
\eeq
and \eqref{dirichlet-f}\eqref{coercive-f} hold for some $K>0$.
It turns out that the h-convex hull of $E$ coincides with the zero sublevel set of $Q(f)$, i.e., 
\beq\label{final defining func}
\coh(E)=\{p\in \Omega: Q(f)<0\}.
\eeq
We remark that $\coh(E)$ is independent of the choices of $f$ and $\Omega$. As long as \eqref{initial defining func} together with \eqref{dirichlet-f}\eqref{coercive-f} holds, $\coh(E)$ obtained in \eqref{final defining func} will not change. See Theorem \ref{thm hull1} for more precise statements. 

This PDE approach leads us to a better understanding about h-convex hulls. One application is about the inclusion principle. By definition, it is easily seen that $\coh(D)\subset \coh(E)$ holds for any sets $D, E\subset \H$ satisfying $D\subset E$. In Theorem \ref{thm sep}, we establish a quantitative version of the inclusion principle in $\H$. For any bounded open (or closed) sets $D, E\subset \H$, we obtain 
\beq\label{dist sep}
\inf\left\{\tilde{d}_H(p, q): p\in \coh(D), q\in \H\setminus \coh(E) \right\}\geq \inf\left\{\tilde{d}_H(p, q): p\in D, q\in \H\setminus E\right\}.
\eeq
where $\tilde{d}_H$ denotes the right invariant gauge metric in $\H$; see \eqref{right metric} below. This property amounts to saying that taking h-convex hulls of two sets in $\H$, one contained in the other, does not reduce the shortest $\tilde{d}_H$ distance between their boundaries. If $E$ contains the right invariant $\delta$-neighborhood of $D$ for some $\delta>0$, then $\coh(E)$ also contains the right invariant $\delta$-neighborhood of $\coh(D)$.

While such a result can be obtained comparatively easily in the Euclidean case, the proof is more involved in the Heisenberg group. 
Our proof is based on comparing the h-quasiconvex envelopes of defining functions of both sets combined with arguments involving sup-convolutions. It is not clear to us whether one can replace $\tilde{d}_H$ by the left invariant gauge metric $d_H$. This problem is related to the h-convexity preserving property for solutions of evolution equations in the Heisenberg group; see some partial results in \cite{LMZ, LZ2}. 

Another natural question is on the continuity (or stability) of $\coh(E)$ with respect to the set $E$, which we discuss in the last part of this paper. In contrast to the Euclidean case, in general the Hausdorff distance $d_H(\coh(E_j), \coh(E))$ between $\coh(E_j)$ and $\coh(E)$ does not necessarily converge to zero when $d_H (E_j,E)\to 0$ in the Heisenberg group, see Example \ref{ex2}. One can show rather easily that such stability result holds under a strict star-shapedness assumption on the set $E$; see Proposition \ref{prop stable}.

\subsection{Notations}
We conclude the introduction by listing several notations that are often used in the work. Throughout this paper,  $|\cdot |_G$ stands for the Kor\'{a}nyi gauge, i.e., for $p=(x, y, z)\in \H$
\[
|p|_G=\left((x^2+y^2)^2+16z^2\right)^{\frac{1}{4}}.
\]
The Kor\'{a}nyi gauge induces a left invariant metric $d_H$ on $\H$ with
\[
d_H(p, q)=|p^{-1}\cdot q|_G\quad p, q\in \H.
\]
We also use the right invariant metric $\tilde{d}_H$,  defined by 
\beq\label{right metric}
\tilde{d}_H(p, q)=|p\cdot q^{-1}|_G, \quad p, q\in \H.
\eeq
The associated distances between a point $p\in \H$ and a set $E\subset \H$ are respectively denoted by $d_H(p, E)$ and $\tilde{d}_H(p, E)$. 
For two sets $D, E\subset \H$, we write $d_H(D, E)$ and $\tilde{d}_H(D, E)$ to denote respectively the Hausdorff distances between $D$ and $E$ with respect to the metrics $d_H$ and $\tilde{d}_H$, i.e, for $d=d_H$ or $d=\tilde{d}_H$,
\[
d(D, E)=\max\left\{\sup_{p\in D}d(p, E),\ \sup_{p\in E} d(p, D)\right\}.
\]

We denote by $B_r(p)$ the open gauge ball in $\H$ centered at $p\in \H$ with radius $r>0$, that is, 
\[
B_r(p)=\{q\in \H: |p^{-1}\cdot q|_G< r\},
\]
while $\tilde{B}_r(p)$ represents the corresponding right-invariant metric ball. 

Let $\delta_\lambda$ denote the non-isotropic dilation in $\H$ with $\lambda\geq 0$, that is, $\delta_\lambda(p)=(\lambda x, \lambda y, \lambda^2 z)$ for $p=(x, y, z)\in \H$. 
We write $\delta_\lambda(E)$ to denote the dilation of a given set $E\subset \H$, that is, $\delta_\lambda(E)=\{\delta_\lambda(p): p\in E\}$.

The rest of the paper is organized in the following way. In Section \ref{sec:h-quasiconvex}, we first give a review on the definitions and basic properties of h-convex sets and h-quasiconvex functions, and then present the viscosity characterization of upper semicontinuous h-quasiconvex functions. We also show how to construct the h-quasiconvex envelope by iterated application of the operator in \eqref{direct conv}.
Section \ref{sec:nonlocal-hj} is devoted to the well-posedness of the nonlocal Hamilton-Jacobi equation, including the uniqueness and existence of viscosity solutions. Our PDE-based iterative scheme is introduced in Section \ref{sec:iteration}. We finally discuss applications of our results to the h-convex hull of a given open or closed set in Section \ref{sec:convex-hull}.

\section{H-quasiconvex functions}\label{sec:h-quasiconvex}

\subsection{Definition and basic properties}

Let us first go over the definition of h-convex sets. We restrict the original definition proposed in \cite{DGN1} for general Carnot groups to the case of $\H$. 

\begin{defi}[Definition 7.1 in \cite{DGN1}]\label{def h-set}
We say, that a set $E \subset \H$ is h-convex if for every $p\in E$ and $q\in \H_p\cap E$, the horizontal segment $[p, q]$ joining $p$ and $q$ stays in $E$.
\end{defi}

As pointed out in \cite[Proposition 7.4]{DGN1}, any gauge ball $B_R(p)$ with $p\in \H$ and $R>0$ is h-convex. The notion of h-convex sets is in fact very weak. There are numerous h-convex sets in $\H$ that are obviously not convex in the Euclidean sense. 

\begin{example}[Disconnected h-convex sets]\label{ex1}
Denote by $\pi(0, \rho)$ the planar open disk centered at the origin with radius $\rho>0$, i.e., 
\begin{equation}\label{h-pi}
\pi(0, \rho) := \{(x,y) \in \R^2: x^2+y^2 < \rho^2 \}.
\end{equation}
Let us consider a disconnected set $E =(\pi(0, r) \times \{0\}) \cup (\pi(0,R) \times \{t\})$, where $r, R, t>0$ are given. Such a set $E$ is h-convex under appropriate conditions on $r$ and $R$.  To see this, we take the horizontal plane 
 \[
Z_t=\{(x, y, z)\in \H: z=t\}
 \]
 and compute the distance between $q_t=(0, 0, t)$ and $\H_p\cap Z_t$ for each point $p=(x_p, y_p, 0)\in \pi(0, r)\times \{0\}$. It turns out that 
 \[
 d_H(q_t, \H_p\cap Z_t) = \frac{2 t}{\sqrt{x_p^2 + y_p^2}} \geq \frac{2t}{r}.
 \]
If $d_H(q_t, \H_p\cap Z_t)\geq R$, then none of the horizontal planes of points $p\in \pi(0, r)\times \{0\}$ in the lower disk will intersect the upper disk $\pi(0, R)\times \{t\}$. This means that $E$ is h-convex if $2t\geq rR$. It is obvious that in general $E$ is not connected and thus cannot be convex as a subset of $\R^3$. It is also clear that $E$ is no longer h-convex if $2t<rR$.
\end{example}

Let us also recall from \cite{CCP2} the definition of h-quasiconvex functions in $\H$. 

\begin{defi}[Definition 4.3 in \cite{CCP2}]\label{def h-fun}
Suppose that $\Omega \subset \H$ is h-convex. We say, that a function $u :\Omega \to \R$ is h-quasiconvex if \eqref{h-quasi def} holds
for every $p\in \Omega$, $q \in \H_p\cap \Omega$ and $w \in [p,q]$. In other words, $u$ is h-quasiconvex if for every $\lambda \in \R$ the sublevel set $\{ w \in \Omega: u(w) \leq \lambda\}$ is h-convex.
\end{defi}

\begin{rmk}
We remark that it is equivalent to define h-convex functions with strict sublevel set $\{w\in \Omega: u(w)<\lambda\}$. In fact, first note that the 
\[
\{w\in \Omega: u(w)\le \lambda\}=\bigcap_{\vep>0}\{w\in \Omega: u(w)< \lambda+\vep\}
\] 
and the intersection of h-convex sets are still h-convex. On the other hand, if $\{w\in \Omega: u(w)\le \lambda\}$ is h-convex, then \eqref{h-quasi def} holds for every $p\in \Omega$, $q \in \H_p\cap \Omega$ and $w \in [p, q]$. If $p, q\in \{w\in \Omega: u(w)<\lambda\}$ and $q \in \H_p\cap \Omega$, it follows that $u(w)<\lambda$ when $w \in [p, q]$ and thus $\{x\in \Omega: u(w)<\lambda\}$ is h-convex. 
\end{rmk}

For our later applications, below we provide a typical h-quasiconvex function associated to a given h-convex set. The construction is based on the right invariant metric $\tilde{d}_H$, as given in \eqref{right metric}.

\begin{prop}[A metric-based h-quasiconvex function]\label{prop metric-quasi}
Suppose that $\Omega\subset \H$ is an h-convex domain in $\H$ and $E$ is an h-convex open subset of $\Omega$. Then $\psi_{E}\in C(\Omega)$ given by
\[
\psi_E(p)=-\tilde{d}_H(p, \H\setminus E), \quad p\in \Omega, 
\]
is an h-quasiconvex function in $\Omega$. 
\end{prop}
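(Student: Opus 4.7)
The plan is to verify the sublevel-set formulation in Definition \ref{def h-fun}. First, note that $\psi_E\leq 0$ throughout $\Omega$ (it vanishes on $\Omega\setminus E$ and is negative on $E$), so for every $\lambda\geq 0$ the sublevel set $\{w\in\Omega:\psi_E(w)\leq \lambda\}$ equals $\Omega$ itself, which is h-convex by hypothesis. The real work is for $\lambda<0$: setting $r=-\lambda>0$, the sublevel set becomes
\[
E_r:=\{w\in\Omega:\tilde{d}_H(w,\H\setminus E)\geq r\}=\{w\in\Omega:\widetilde{B}_r(w)\subset E\}.
\]
So one is reduced to showing that $E_r$ is h-convex for every $r>0$.

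The key geometric observation is that the right-invariance of $\tilde d_H$ gives $\widetilde{B}_r(w)=B_r(0)\cdot w$; that is, a point lies in the right-invariant ball $\widetilde{B}_r(w)$ exactly when it is of the form $g\cdot w$ with $|g|_G<r$. To establish h-convexity of $E_r$, I would take $p,q\in E_r$ with $q\in\H_p\cap\Omega$, write $q=p\cdot h$ for some $h\in\H_0$, and parametrize the horizontal segment as $w=p\cdot(sh)$ for $s\in[0,1]$; h-convexity of $\Omega$ already ensures $w\in\Omega$. For any fixed $g\in B_r(0)$, the hypothesis $p,q\in E_r$ gives $g\cdot p,\,g\cdot q\in E$. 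Because left translation preserves the horizontal distribution,
\[
(g\cdot p)^{-1}\cdot(g\cdot q)=p^{-1}\cdot q=h\in\H_0,
\]
so $g\cdot q\in\H_{g\cdot p}$ and the horizontal segment joining them is $\{(g\cdot p)\cdot(sh):s\in[0,1]\}=g\cdot[p,q]$. H-convexity of $E$ forces this whole segment to lie in $E$, and in particular $g\cdot w\in E$. Since $g\in B_r(0)$ was arbitrary, $\widetilde{B}_r(w)\subset E$, i.e. $w\in E_r$.

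Continuity of $\psi_E$ is standard: the triangle inequality for $\tilde d_H$ yields $|\psi_E(p)-\psi_E(q)|\leq\tilde d_H(p,q)$, so $\psi_E$ is even $1$-Lipschitz with respect to the right-invariant gauge metric, hence continuous in $\Omega$. The only delicate point in the argument is the interplay between the \emph{right}-invariant distance used to build $\psi_E$ and the \emph{left}-invariant horizontal planes used to define h-convexity; the computation above shows these two structures cooperate precisely because left translation by $g$ sends the horizontal segment $[p,q]$ to the horizontal segment $[g\cdot p,\,g\cdot q]$, which is exactly what is needed to transfer h-convexity of $E$ to h-convexity of its right-invariant inner parallel sets $E_r$.
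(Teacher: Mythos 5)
Your argument is correct and rests on the same key observation as the paper's proof: the right-invariant ball $\widetilde{B}_r(w)$ is the left-translate family $B_r(0)\cdot w$, and left translation by $g$ carries the horizontal segment $[p,q]$ to the horizontal segment $[g\cdot p,\,g\cdot q]$, which transfers the h-convexity of $E$ to the sublevel sets of $\psi_E$. The paper organizes this as a proof by contradiction with $g=\zeta\cdot w^{-1}$ for a nearby point $\zeta\in\H\setminus E$, whereas you argue directly over all $g\in B_r(0)$; this is only a cosmetic difference.
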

\begin{proof}
It suffices to show that 
\[
E_\lambda:=\{p\in \Omega: \psi_E(p)<\lambda\}
\] 
is h-convex for every $\lambda\in \R$. We only need to consider the case for $\lambda< 0$, since $E_\lambda=E$ if $\lambda=0$ and $E_\lambda=\Omega$ if $\lambda> 0$. Assume by contradiction that there exist $p, q\in E_\lambda$ with $q\in \H_p$ as well as a point $w\in [p, q]\cap (\Omega\setminus E_\lambda)$. This means that 
\[
\tilde{d}_H(w, \H\setminus E)\le-\lambda
\]
and there exists $\zeta\in \H\setminus E$ such that $|\zeta\cdot w^{-1}|_G\leq -\lambda$. 
Taking $\xi=\zeta\cdot w^{-1}\cdot p, \quad \eta=\zeta\cdot w^{-1} \cdot q$, we can easily verify that $\eta\in \H_\xi$ and $\zeta\in [\xi, \eta]$. 

Suppose that $\xi \in \H\setminus E$ holds. Then, since 
\[
\tilde{d}_H(\xi, p)=|\xi\cdot p^{-1}|_G=|\zeta\cdot w^{-1}|_G\leq -\lambda,
\]
we have $\tilde{d}_H(p, \H\setminus E)\leq -\lambda$ or equivalently $\psi_E(p)\geq \lambda$, which is a contradiction to the condition $p\in E_\lambda$. We can similarly derive a contradiction if $\eta\in \H\setminus E$ holds. 

The remaining case when $\xi, \eta\in E$ is an obvious contradiction to the assumption that $E$ is h-convex, since $\zeta\in [\xi, \eta]$ and $\zeta\in \H\setminus E$.
\end{proof}

\subsection{Viscosity characterization of h-quasiconvexity}

The following characterization of h-quasiconvexity is known for smooth functions \cite[Theorem 4.5]{CCP2}.  We provide a generalized result in the nonsmooth case by extending \cite[Proposition 2.2]{BGJ1} to the Heisenberg group. 

\begin{thm}[Viscosity characterization of h-quasiconvexity]\label{thm char}
Let $\Omega \subset \H$ be open and h-convex and $u :\Omega \to [-\infty,\infty)$ be upper semicontinuous. Then, $u$ is h-quasiconvex if and only if whenever there exist $p\in \Omega$ and $\varphi\in C^1(\Omega)$ such that $u-\varphi$ attains a maximum  at $p$, 
\begin{equation}\label{usc}
\langle \nabla_H \varphi(p), (p^{-1}\cdot \xi)_h \rangle \leq 0\quad \text{holds for any $\xi \in \H_p\cap \Omega$ satisfying $u(\xi) < u(p)$}.
\end{equation}
\end{thm}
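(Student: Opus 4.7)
My plan is to prove the two implications separately: the forward direction by a direct geometric perturbation in the group, and the reverse direction by contrapositive, reducing to the one-dimensional Euclidean analogue established in \cite[Proposition 2.2]{BGJ1}.

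For the forward direction, I would assume $u$ is h-quasiconvex, that $u-\varphi$ attains a local maximum at some $p\in\Omega$, and that $\xi\in\H_p\cap\Omega$ satisfies $u(\xi)<u(p)$. Writing $h=p^{-1}\cdot\xi\in\H_0$, the key idea is to probe in the \emph{opposite} direction from $\xi$: for small $s>0$ I set $p_s:=p\cdot(-sh)$, which lies in $\H_p\cap\Omega$ since $p$ is interior. A direct computation in the Heisenberg law gives $p_s^{-1}\cdot\xi=(s+1)h\in\H_0$ and shows that $p$ itself lies on the horizontal segment $[p_s,\xi]$ at parameter $s/(s+1)$. Applying h-quasiconvexity on $[p_s,\xi]$ then yields $u(p)\leq\max\{u(p_s),u(\xi)\}$, and since $u(\xi)<u(p)$ this forces $u(p_s)\geq u(p)$. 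The local maximum condition for $u-\varphi$ at $p$ then gives $\varphi(p_s)-\varphi(p)\geq u(p_s)-u(p)\geq 0$. Because the curve $s\mapsto p_s$ is horizontal with tangent $-h_1 X_1(p)-h_2 X_2(p)$ at $s=0$, the first-order $C^1$ expansion reads $\varphi(p_s)-\varphi(p)=-s\,\langle\nabla_H\varphi(p),h_h\rangle+o(s)$; dividing by $s$ and sending $s\to 0^+$ produces the desired $\langle\nabla_H\varphi(p),h_h\rangle\leq 0$.

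For the reverse direction, I would argue by contrapositive. Assume the viscosity condition but that $u$ is not h-quasiconvex, so there exist $p_0,q_0\in\Omega$ with $q_0\in\H_{p_0}$ and $t_0\in(0,1)$ such that $u(p_0\cdot(t_0h))>M:=\max\{u(p_0),u(q_0)\}$, where $h=p_0^{-1}q_0$. The restriction $f(t):=u(p_0\cdot(th))$ is USC on $[0,1]$ and fails one-dimensional quasiconvexity; invoking the Euclidean one-dimensional case (that is, \cite[Proposition 2.2]{BGJ1} specialized to $\R$) will produce a $C^1$ function $g:[0,1]\to\R$, an interior $t^*\in(0,1)$, and $t'\in[0,1]$ such that $f-g$ has a local maximum at $t^*$, $f(t')<f(t^*)$, and $g'(t^*)(t'-t^*)>0$. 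To lift this data back to $\H$, I set $p^*:=p_0\cdot(t^*h)$ and $\xi:=p_0\cdot(t'h)\in\H_{p^*}$, and take the test function
\[
\varphi(\eta)=g(s(\eta))+\alpha\, F(\eta)^2+\beta\, d(\eta,L)^2,
\]
where $s(\eta)$ is an affine parameter along $h$ with $s(p_0\cdot(th))=t$, $F(\eta):=((p^*)^{-1}\cdot\eta)_3$ measures deviation from the horizontal plane $\H_{p^*}$, $L$ is the affine line through $[p_0,q_0]$, and $\alpha,\beta>0$ are large. A short calculation with $X_1,X_2$ shows $F(p^*)=\nabla_H F(p^*)=0$ and $d(p^*,L)=\nabla_H d^2(\cdot,L)(p^*)=0$; the chain rule along the horizontal segment then delivers $\langle\nabla_H\varphi(p^*),h_h\rangle=g'(t^*)$. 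For $\alpha,\beta$ large the penalty terms force $u-\varphi$ to attain its local maximum at $p^*$ in a small neighborhood, and the assumed viscosity condition at $p^*$ (for which $((p^*)^{-1}\xi)_h=(t'-t^*)h_h$) reduces to $g'(t^*)(t'-t^*)\leq 0$, contradicting the strict positivity above.

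The hardest part will be the reverse-direction lifting: verifying rigorously that the two penalties actually place the local maximum of $u-\varphi$ precisely at $p^*$ (rather than at a nearby perturbation) is a standard but delicate penalization argument in viscosity theory, and extracting the triple $(g,t^*,t')$ in the 1D step requires handling possible plateaus at the maximum of $f$—a subtlety dealt with in \cite{BGJ1} by choosing extremal points of the argmax set of $f$ and using a linear probe $g$ on a side where $f$ strictly drops.
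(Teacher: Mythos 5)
Your forward direction is correct and is essentially the paper's own argument written directly rather than by contradiction: your probe $p_s=p\cdot(-sh)$ is the paper's $w=p\cdot\delta_\lambda(\xi^{-1}\cdot p)$, and the one-sided difference quotient along the horizontal curve yields the sign of the pairing. No issues there.

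The reverse direction, however, has a genuine gap at the lifting step. You assert that for $\alpha,\beta$ large the penalties $\alpha F^2+\beta\, d(\cdot,L)^2$ force $u-\varphi$ to attain a local maximum \emph{at} $p^*$. This is false for a merely upper semicontinuous $u$: both penalties vanish on the entire line $L$ and are only quadratic transversally, so if for instance $u(\eta)=u(p^*)+d(\eta,L)^{1/2}$ near $p^*$ off $L$, then $u-\varphi$ exceeds its value at $p^*$ at points arbitrarily close to $L$ for every finite $\alpha,\beta$; no finite penalty pins the maximizer to $L$, let alone to $p^*$. The correct procedure --- the one the paper uses --- is to let the penalty parameter tend to infinity, obtain maximizers $p_n$ that only \emph{converge} to the segment, and apply the viscosity inequality at $p_n$. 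At that point two things your plan does not address become essential: (i) the probe point must lie in $\H_{p_n}$, not in $\H_{p^*}$, so your fixed $\xi=p_0\cdot(t'h)$ must be replaced by a moving point (the paper takes $w_n=(x_{p_n}/t,\,y_{p_n}/t,\,z_{p_n})$, a horizontal dilation of $p_n$ that lies in $\H_{p_n}$ automatically and converges to the endpoint of the segment); and (ii) the pairing $\langle\nabla_H\varphi_n(p_n),(p_n^{-1}\cdot w_n)_h\rangle$ contains the gradient of the penalty multiplied by the large parameter, so its sign cannot be read off from the values at $p^*$ --- the paper's penalty $n\left((x y_\xi-y x_\xi)^2+z^2\right)$ and probe direction are engineered so that this contribution is the perfect square $2n(x_{p_n}y_\xi-y_{p_n}x_\xi)^2\ge 0$, with the small tilt $\frac{1}{n}\langle p,\xi\rangle$ supplying strict positivity. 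Without verifying the analogous sign condition for your penalties at the off-line maximizers, the contradiction $g'(t^*)(t'-t^*)\le 0$ does not follow. The one-dimensional reduction itself is sound, but the lift is the heart of the proof and is not established by your argument.
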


\begin{rmk}
It is worth mentioning that, as in \cite{CCP2},  we can also express the inner product term $\la \nabla_H \varphi(p), (p^{-1}\cdot \xi)_h\ra$ in \eqref{usc} by $\la \nabla \varphi(p), \xi-p\ra$. 
This is possible because of the condition that $\xi\in \H_p$. We shall maintain the expression on the left hand side to suggest possible generalization of our results in general Carnot groups, which is not elaborated in this paper. 
\end{rmk}

\begin{proof}[Proof of Theorem \ref{thm char}]
Let us prove the necessity of \eqref{usc} by contradiction. Suppose, that $u$ is upper semicontinuous h-quasiconvex function, $\varphi\in C^1(\Omega)$ is such that $u-\varphi$ attains a maximum at $p\in \Omega$, and there exists $\xi\in \H_p\cap \Omega$ with $u(\xi) < u(p)$ such that 
\[
\langle \nabla_H \varphi(p) , (p^{-1}\cdot \xi)_h \rangle>0. 
\]
Then, for $\lambda>0$ small enough and $w = p\cdot \delta_\lambda (\xi^{-1}\cdot p)$ there holds $u(w) < u(p)$. Indeed, the directional derivative of $\varphi$ at $p$ in the direction $p^{-1}\cdot \xi$ is positive, and hence $\varphi(w) < \varphi(p)$ for $\lambda>0$ small enough. Since $u-\varphi$ attains a maximum at $p$ we obtain $u(p)>u(w)$. We conclude with $\xi \in \H_w$, $p \in [w, \xi]$ and $u(p)>\max\{u(w),u(\xi)\}$, which contradicts the h-quasiconvexity of $u$.

Now we are left with proving sufficiency of \eqref{usc}. Suppose that $u$ is not h-quasiconvex. Then, without loss of generality there exists a point $\xi=(x_\xi, y_\xi, 0) \in \H_0$  such that 
\[
\max_{[0,\xi]} u > \max\{u(0),u(\xi)\}.
\] 
Denote by $Z$ the set of maximizers of $u$ on the segment $[0,\xi]$. By the upper semicontinuity of $u$ there exists $R \in (0, |\xi|/4)$ small enough such that 
\[
\min \{d_H(0,Z),d_H(\xi,Z)\}>R
\] 
(i.e. $Z$ is in the relative interior of $[0,\xi]$) and $ u(q) < \max_{[0, \xi]} u$ for any $q \in B_R(0) \cup B_R (\xi)$. Let 
\[
\mathcal{C}:=\{q \in \Omega: d_H(q,[0,\xi])<R, 0<\langle q,\xi \rangle < |\xi|^2 \}
\] 
be a cylindrical neighborhood of the segment $[0,\xi]$. We define $\varphi_n$ by
\[
\varphi_n(p) = \frac{1}{n} \langle p, \xi \rangle +n \left((x_py_\xi-y_px_\xi)^2+z_p^2\right), \quad p\in \Omega. 
\]
As $n\to \infty$, $u-\varphi_n\to u$ pointwise in $[0,\xi]$ and $u-\varphi_n\to -\infty$ elsewhere in $\overline{\mathcal{C}}$. 
Then there exists a sequence $p_n=(x_{p_n},y_{p_n},z_{p_n}) \in \overline{\mathcal{C}}$ such that
\[ \max_{\overline{\mathcal{C}}} (u-\varphi_n ) =u(p_n)- \varphi_n(p_n),
\]
 which converges to a point in $Z$ via a subsequence. Let us index the subsequence still by $n$ for notational simplicity. Suppose that the subsequence $p_n \to (tx_\xi, ty_\xi, 0)$ as $n \to \infty$ for some $t \in (R/|\xi|,1-R/|\xi|)$. Let us consider $w_n =(x_{p_n}/ t, y_{p_n}/t ,z_{p_n}) \in \H_{p_n}$. Observing that 
\[
d_H(\xi,w_n) = |\xi^{-1} \cdot w_n |_G \to 0,
\] 
we get $u(w_n)<u(p_n)$ for $n$ large enough.

Let us compute $\nabla_H \varphi_n(p_n)$ as follows:
\[
\nabla_H \varphi_n (p_n) = \Bigg( \frac{x_\xi }{n} +2n y_\xi (x_{p_n}y_\xi -y_{p_n}x_\xi)- n y_{p_n} z_{p_n}, \frac{y_\xi }{n} -2nx_\xi (x_{p_n}y_\xi -y_{p_n}x_\xi)+ n x_{p_n} z_{p_n}\Bigg).
\]
Since 
\[
(p_n^{-1}\cdot w_n)_h = \left( \frac{1}{t}-1 \right) (x_{p_n}, y_{p_n}),
\]
we have
\[\langle \nabla_H \varphi_n(p_n),(p_n^{-1}\cdot w_n)_h \rangle = \left(\frac{1}{t}-1\right) \left( \frac{\langle p_n,\xi \rangle}{n} +2n (x_{p_n}y_\xi -y_{p_n}x_\xi)^2 \right)>0,
\]
which contradicts \eqref{usc}.
\end{proof}

Theorem \ref{thm char} amounts to saying that $u\in USC(\Omega)$ is h-quasiconvex if \eqref{vis char}
holds in the viscosity sense, that is, 
\[
\sup\{\la \nabla_H \varphi(p), (p^{-1}\cdot \xi)_h\ra: \xi\in \H_p\cap \Omega, \ u(\xi)< u(p)\}\leq 0
\]
whenever there exist $p\in \Omega$ and $\varphi\in C^1(\Omega)$ such that $u-\varphi$ attains a maximum at $p$. As a standard remark in the viscosity solution theory, the maximum here can be replaced by a local maximum or a strict maximum.

In spite of the nonlocal nature, we can obtain the following property by using the geometricity of the operator. 
\begin{lem}[Invariance with respect to composition]\label{lem geometric}
Let $\Omega\subset \H$ be h-convex and $u: \Omega\to \R$ be bounded and upper semicontinuous. Assume that $g: \R\to \R$ is a nondecreasing continuous function. If $u$ is h-quasiconvex in $\Omega$, then so is $g\circ u$. 
\end{lem}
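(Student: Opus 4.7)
The plan is to prove this lemma by a direct application of Definition \ref{def h-fun}, bypassing the viscosity characterization of Theorem \ref{thm char}; the key observation is that a nondecreasing function commutes with the pointwise maximum, i.e., $g(\max\{a,b\}) = \max\{g(a), g(b)\}$ for all $a, b \in \R$.

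First, I would fix arbitrary $p\in \Omega$, $q \in \H_p\cap \Omega$, and $w \in [p,q]$. Since $u$ is h-quasiconvex in $\Omega$, the inequality \eqref{h-quasi def} yields $u(w) \le \max\{u(p), u(q)\}$. Applying the nondecreasing function $g$ to both sides and using the commutation of $g$ with $\max$ noted above, I would obtain $(g\circ u)(w) \le \max\{(g\circ u)(p), (g\circ u)(q)\}$. This is precisely the defining property of h-quasiconvexity for $g\circ u$.

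An equivalent route goes through the sublevel-set formulation in Definition \ref{def h-fun}: for each $\lambda\in \R$, setting $\mu_\lambda = \sup\{s\in \R: g(s)\le \lambda\}$ and exploiting continuity together with the monotonicity of $g$, one checks $\{p\in \Omega: g(u(p))\le \lambda\} = \{p\in \Omega: u(p)\le \mu_\lambda\}$ (with the usual conventions when $\mu_\lambda = \pm\infty$), and the right-hand side is h-convex by h-quasiconvexity of $u$.

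No serious obstacle is anticipated, as the proof reduces to a single monotonicity step. The hypotheses on $u$ (bounded, upper semicontinuous) and $g$ (continuous) are not strictly needed for h-quasiconvexity of the composition; they serve instead to guarantee that $g\circ u$ remains bounded and upper semicontinuous, a regularity setting that is convenient when subsequently combining this lemma with the viscosity-theoretic results of the paper.
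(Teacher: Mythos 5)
Your proof is correct, and it takes a genuinely different and more elementary route than the paper. The paper proves this lemma through the viscosity characterization of Theorem \ref{thm char}: it first treats $g\in C^1(\R)$ strictly increasing by composing the test function with $g^{-1}$, and then handles a general nondecreasing continuous $g$ by approximating it with strictly increasing $C^1$ functions $g_k$ and passing to the limit in the subsolution inequality (a step that requires some care with the strict sublevel sets $S_p(g_k\circ u)$ and the convergence of the touching points $p_k$). You instead work directly from Definition \ref{def h-fun}: since $g$ is nondecreasing, $g(\max\{a,b\})=\max\{g(a),g(b)\}$, so applying $g$ to \eqref{h-quasi def} immediately gives the defining inequality for $g\circ u$. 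This one-line argument is valid because h-quasiconvexity, as defined in the paper, is a purely pointwise property requiring no regularity; as you correctly observe, the hypotheses that $u$ is bounded and upper semicontinuous and that $g$ is continuous are not needed for the conclusion itself, only to keep $g\circ u$ in the class $USC(\Omega)$ where the paper's subsequent viscosity-theoretic machinery applies. Your alternative sublevel-set route is also sound: the preimage under $g$ of $(-\infty,\lambda]$ is a down-set of $\R$, so $\{g\circ u\le\lambda\}$ is a sublevel set of $u$ (strict or non-strict), and the remark following Definition \ref{def h-fun} covers both cases. What the paper's longer argument buys is a demonstration that the composition invariance is consistent with, and derivable from, the PDE characterization; what yours buys is brevity, transparency, and a slightly stronger statement.
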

\begin{proof}
It is clear that $g\circ u$ is still upper semicontinuous. 
Assume first that $g\in C^1(\R)$ is strictly increasing. It is clear that $g\circ u$ is still upper semicontinuous. Suppose that there exists $p\in \Omega$ and $\varphi\in C^1(\Omega)$ such that $g\circ u-\varphi$ attains a maximum at $p_0$. Then using the inverse function $g^{-1}$ of $g$, we see that $u-g^{-1}\circ\varphi$ also attains a maximum at $p_0$. Applying the characterization of h-quasiconvexity in Theorem \ref{thm char}, we get
\[
\sup\{\la \nabla_H (g^{-1}\circ \varphi)(p_0), (p_0^{-1}\cdot \xi)_h\ra: \xi\in \H_{p_0}\cap \Omega, \ u(\xi)< u(p_0)\}\leq 0,
\]
which implies 
\beq\label{geometric eq1}
\sup\{\la \nabla_H \varphi(p_0), (p_0^{-1}\cdot \xi)_h\ra: \xi\in \H_{p_0}\cap \Omega, \ (g\circ u)(\xi)< (g\circ u)(p_0)\}\leq 0.
\eeq
This shows that $g\circ u$ is h-quasiconvex. 

For the general case when $g\in C(\R)$ is nondecreasing, we can take a sequence $g_k\in C^1(\R)$ increasing such that $g_k\to g$ locally uniformly in $\R$ as $k\to \infty$. In this case, suppose that there exists $p\in \Omega$ and $\varphi\in C^1(\Omega)$ such that $g\circ u-\varphi$ attains a strict maximum at $p_0$. Then there exist $p_k\in \Omega$ such that $p_k\to p_0$ as $k\to \infty$ and $g_k\circ u-\varphi$ attains a local maximum at $p_k$. 

Adopting the argument above, we obtain the h-quasiconvexity of $g_k\circ u$, that is, 
\[
\sup\{\la \nabla_H \varphi(p_k), (p_k^{-1}\cdot \xi)_h\ra: \xi\in \H_{p_k}\cap \Omega, \ (g_k\circ u)(\xi)< (g_k\circ u)(p_k)\}\leq 0.
\]
By the uniform convergence of $g_k\circ u$ to $g\circ u$ in $\Omega$ and the continuity of $\H_p$ with respect to $p$, we can pass to the limit as $k\to \infty$ and obtain the relation \eqref{geometric eq1} again. 
\end{proof}

\subsection{H-quasiconvex envelope}

In what follows, we introduce the h-quasiconvex envelope of a given function. 

\begin{defi}[Definition of h-quasiconvex envelope]\label{defi envelope}
Let $\Omega$ be an h-convex domain in $\H$ and $f: \Omega \to \R$ be a given function. We say, that $Q(f)$ is the h-quasiconvex envelope of $f$ if it is the greatest h-quasiconvex function majorized by $f$, that is 
\[ 
Q(f)(p) := \sup \{g(p): g \leq f  \text{ and } g \text{ is h-quasiconvex}\},
\]
where we adopt the convention that $\sup \emptyset = -\infty$.
\end{defi}

By definition, $Q(f)$ is monotone in $f$; namely, $Q(f)\leq Q(g)$ in $\Omega$ holds provided that $f\leq g$ in $\Omega$.
Moreover, $Q(f)$ is stable with respect to $f$ in the following sense.
\begin{prop}[Stability of h-quasiconvex envelope]\label{prop stability}
Suppose that $\Omega$ is an h-convex domain in $\H$ and $f, g: \Omega\to \R$ are given functions. Assume that both $Q(f)$ and $Q(g)$ exist in $\Omega$. Then there holds 
\beq\label{fun stable}
\sup_{\Omega} |Q(f)-Q(g)|\leq \sup_{\Omega}|f-g|.
\eeq
\end{prop}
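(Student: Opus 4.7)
My plan is to exploit two elementary facts: the trivial monotonicity of the envelope operator $Q$ in $f$, and the observation that adding a constant to an h-quasiconvex function produces another h-quasiconvex function. Combining these via a standard sandwich argument immediately gives the claimed $L^\infty$ contraction property.

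\textbf{Step 1.} Set $M := \sup_\Omega |f-g|$. If $M=\infty$ there is nothing to prove, so I assume $M<\infty$; then $f \leq g+M$ and $g \leq f+M$ pointwise in $\Omega$.

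\textbf{Step 2.} I claim that if $h:\Omega\to\R$ is h-quasiconvex and $c\in\R$, then $h+c$ is h-quasiconvex. This is immediate from Definition \ref{def h-fun}: the sublevel sets of $h+c$ are translates of the sublevel sets of $h$, namely
\[
\{w\in\Omega : (h+c)(w)\leq \lambda\} = \{w\in\Omega : h(w)\leq \lambda-c\},
\]
so they remain h-convex. (One can alternatively invoke Lemma~\ref{lem geometric} with the nondecreasing continuous map $t\mapsto t+c$ in the upper semicontinuous case.)

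\textbf{Step 3.} Since $Q(g)$ is h-quasiconvex and $Q(g)\leq g \leq f+M$, the function $Q(g)-M$ is h-quasiconvex by Step~2 and satisfies $Q(g)-M \leq f$. By the maximality property in Definition~\ref{defi envelope}, this forces $Q(g)-M \leq Q(f)$ throughout $\Omega$, i.e.,
\[
Q(g)-Q(f)\leq M \quad\text{in }\Omega.
\]
Interchanging the roles of $f$ and $g$ yields $Q(f)-Q(g)\leq M$ as well, and together these give \eqref{fun stable}.

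The argument contains no real obstacle; the only point requiring care is Step~2, and this follows directly from the definition in terms of sublevel sets without any regularity hypothesis on $h$.
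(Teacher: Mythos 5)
Your proof is correct and follows essentially the same route as the paper: both arguments rest on the constant-shift invariance of h-quasiconvexity together with the maximality property in Definition~\ref{defi envelope}, the only cosmetic difference being that the paper passes through the intermediate envelope $Q(f-M)$ while you compare $Q(g)-M$ to $Q(f)$ directly. No gaps.
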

\begin{proof}
Let $M:=\sup_{\Oba}|f-g|$. Since $f-M\leq g$ in $\Oba$,
by the monotonicity of $Q$, we get 
\beq\label{fun stable1}
Q(f-M)\leq Q(g) \quad \text{in $\Omega$.}
\eeq
Noticing that $Q(f)-M$ is h-quasiconvex and $Q(f)\leq f$ in $\Omega$, 
by Definition \eqref{defi envelope}, we deduce that
\[
Q(f)-M\leq Q(f-M)\quad \text{in $\Omega$},
\]
which, by \eqref{fun stable1}, yields
\[
Q(f)-M\leq Q(g)\quad \text{in $\Omega$}.
\]
Exchanging the roles of $f$ and $g$, we conclude the proof of \eqref{fun stable}.
\end{proof}

Let us now discuss how to find the h-convex envelope of a given function. A straightforward method is to employ a convexification operator.  For an h-convex domain $\Omega \subset \H$ and $f: \Omega \to \R$, let $T[f]$ be given by \eqref{direct conv},
It is clear that $\inf_{\Omega} f \leq T[f] \leq f$ in $\Omega$. Also, it is easily seen that $T[f]=f$ in $\Omega$ if and only if $f$ is h-quasiconvex. 

This operator is inspired by its Euclidean analogue, which is given by 
\[
T_{eucl}(w)= \inf  \left\{ \max \{ f(p),f(q)\}: w \in [p, q],  p \in \Omega, q \in \Omega \right\},\quad w\in \R^3.
\]
In the Euclidean case, where the quasiconvex envelope, written as $Q_{eucl}(f)$,  satisfies
\[
Q_{eucl}(f)=T_{eucl}[f]
\]
in a bounded convex domain $\Omega$. In contrast, the following example shows that in the Heisenberg group, in general $T[f]$ is not necessarily an h-quasiconvex function.

\begin{example}\label{exa direct}
Let $f:\H\to \R$ be defined as $f(p)=|1-z^2|$ for $p=(x, y, z)\in \H$. One can compute directly to get, for $p=(x, y, z)$,  
\[
T[f](p)=\begin{cases} 
z^2-1&\  |z|\ge 1, \\
0&\  |z|<1\ \text{and}\ (x,y)\neq(0,0),\\
1-z^2 &\  |z|<1\ \text{and}\ (x, y)=(0,0),\\
\end{cases}
\]
which fails to be quasiconvex. In fact, letting $p=(x, y, t)$, $q=(-x, -y, t)$ with $|t|<1$, we see that  $q\in \mathbb{H}_p$ and at $w=(0,0, t)\in [p, q]$, we have 
\[
T[f](w)=1-t^2>0=\max\{T[f](p), T[f](q)\}. 
\]
However, if we apply the operator one more time, then we have, for $p=(x, y, z)\in \H$, 
\[
T^2[f](p)=\begin{cases} z^2-1&\ |z|\ge 1, \\
0&\ |z|<1.
\end{cases}
\]
It is not difficult to see that $Q(f)=T^2[f]$ in $\H$. Indeed, Noticing that $T^2[f]$ is h-quasiconvex, by definition we have $T^2[f]\leq Q(f)$ in $\H$. On the other hand, the reverse inequality $Q(f)\leq T^2[f]$ can be obtained by applying the operator $T$ twice to the inequality $Q(f)\leq f$. 
\end{example}

It turns out that in general one can obtain the quasiconvex envelope by iterating the operator $T$. Such type of iteration is also used in \cite{LZ2} to construct the h-convex envelope of a given continuous function in the Heisenberg group. 
\begin{thm}[Iterative scheme with direct convexification]\label{thm direct}
Let $\Omega$ be an h-convex domain in $\H$. Suppose that $f$ is bounded from below. Let $T$ be the operator given by \eqref{direct conv}. Then $T^n[f] \to  Q(f)$ pointwise in $\Omega$ as $n\to \infty$.
\end{thm}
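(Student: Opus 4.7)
The plan is to show that the monotone decreasing iterates $T^n[f]$ converge pointwise to a function $v$ that is sandwiched between $Q(f)$ and $f$ and is itself h-quasiconvex, so that $v = Q(f)$ by the very definition of the envelope.

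First, I would establish the basic monotonicity picture. Taking $p = q = w$ in \eqref{direct conv} yields $T[g] \leq g$ for any $g$, and $T$ is manifestly monotone in the sense that $g \leq h$ implies $T[g] \leq T[h]$. Induction therefore gives $T^{n+1}[f] \leq T^n[f] \leq f$, while the lower bound $T^n[f] \geq \inf_\Omega f > -\infty$ is preserved at each step. Hence $T^n[f]$ is a pointwise non-increasing sequence bounded below and converges pointwise to some $v: \Omega \to \R$ with $v \leq f$.

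Next I would argue $v \geq Q(f)$. A direct check from Definition \ref{def h-fun} shows that the pointwise supremum of any family of h-quasiconvex functions is h-quasiconvex, so $Q(f)$ itself is h-quasiconvex. Since \eqref{direct conv} and Definition \ref{def h-fun} combine to show that $T[g] = g$ holds exactly when $g$ is h-quasiconvex, we have $T[Q(f)] = Q(f)$. Combining with $Q(f) \leq f$ and the monotonicity of $T$, induction yields $Q(f) = T^n[Q(f)] \leq T^n[f]$ for every $n$, and passing to the pointwise limit gives $Q(f) \leq v$.

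The crux is to verify that $v$ is itself h-quasiconvex, since together with $v \leq f$ this forces $v \leq Q(f)$ by Definition \ref{defi envelope} and completes the proof. For this, fix $p \in \Omega$, $q \in \H_p \cap \Omega$ and $w \in [p,q]$; then the pair $(p,q)$ is admissible in the infimum defining $T[T^n[f]](w)$, so
\[
T^{n+1}[f](w) \leq \max\{T^n[f](p), T^n[f](q)\}.
\]
Passing to the pointwise monotone limit $n \to \infty$ gives $v(w) \leq \max\{v(p), v(q)\}$, which is precisely h-quasiconvexity. There is no real obstacle in this argument: it is a standard monotone-iteration scheme that becomes essentially elementary once one observes that h-quasiconvexity is preserved under decreasing pointwise limits. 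It is worth emphasizing, however, that only pointwise convergence can be expected here, since neither the iterates $T^n[f]$ nor the envelope $Q(f)$ need be continuous; this is in contrast with the stronger uniform statement in Theorem \ref{thm scheme bdry} obtained via the PDE-based scheme.
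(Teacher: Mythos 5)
Your proof is correct and follows essentially the same route as the paper's: establish monotone pointwise convergence of $T^n[f]$ to a limit $v$, pass the defining inequality $T^{n+1}[f](w)\leq\max\{T^n[f](p),T^n[f](q)\}$ to the limit to get h-quasiconvexity of $v$ (hence $v\leq Q(f)$), and show $Q(f)\leq T^n[f]$ for all $n$ (the paper does this by the direct inequality $\max\{f(p),f(q)\}\geq Q(f)(w)$, you via the equivalent fixed-point property $T[Q(f)]=Q(f)$ plus monotonicity of $T$). The only substantive ingredient you add beyond the paper is the explicit observation that $Q(f)$ is itself h-quasiconvex as a pointwise supremum of h-quasiconvex functions, which is indeed needed for your variant and is easily verified from \eqref{h-quasi def}.
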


\begin{proof}
Notice that by the monotonicity of $T^n[f]$ in $n$ and boundedness of $f$ from below,  the pointwise limit of $T^n[f]$ exists. Let us denote it by $F$, i.e., 
\[
F:=\lim_{n\to \infty} T^n[f]. 
\] 
Let us fix $\varepsilon>0$, $p \in \Omega$, $q \in \Omega \cap \H_p$ and $w \in [p,q]$. For $n$ sufficiently large, there holds 
\[ 
F(p) \geq T^n[f](p) - \varepsilon, \qquad F(q) \geq T^n[f](q) - \varepsilon.
\]
Moreover, we have
 \[
  \max \{ T^n[f](p),T^n[f](q) \} \geq T^{n+1} [f](w) \geq F(w),
 \]
and therefore
 \[ 
 \max\{ F(p), F(q) \} \geq F(w) -\varepsilon.
 \]
Letting $\varepsilon \to 0$, we deduce that $F$ is h-quasiconvex and thus $F \leq Q(f)$ in $\Omega$. 

On the other hand, for any $w \in \Omega$ and $p \in \Omega$, $q \in \Omega \cap \H_p$ such that $w\in [p, q]$,  there holds
\[ 
\max \{ f(p), f(q)\} \geq \max\{ Q(f)(p),Q(f)(q)\} \geq Q(f)(w).
\]
It follows that $T[f] \geq Q(f)$ in $\Omega$. 
We can iterate this argument obtain 
$T^n[f] \geq Q(f)$ in $\Omega$ for every $n$. Hence, sending $n\to \infty$, we are led to $F\geq Q(f)$ holds in $\Omega$, which completes the proof.
\end{proof}

As shown in Example \ref{exa direct}, $T^n[f]=Q(f)$ may hold for a finite $n$. We do not know in general how many iterations one needs to run to obtain $Q(f)$. It would be interesting to find a condition to guarantee the finiteness of $n$.

\section{Nonlocal Hamilton-Jacobi equation}\label{sec:nonlocal-hj}
Inspired by \cite{BGJ1}, we would like to focus our attention on a PDE-based approach to build the h-quasiconvex envelope of a given function $f$ in a bounded domain $\Omega\subset \H$. We develop the idea in Theorem \ref{thm char}, which provides a characterization of h-quasiconvexity in terms of viscosity subsolutions of a nonlocal PDE. 
For our convenience of notations below, for any function $u: \Omega\to \R$ and any $p\in \Omega$, we denote 
\[
S_p(u)=\{\xi\in \H_p\cap \Omega: u(\xi)<u(p)\}.
\]

We study the following nonlocal Hamilton-Jacobi equation:
\begin{equation}\label{hj eq}
u(p)+H(p, u(p), \nabla_H u(p))=f(p) \quad \text{in $\Omega$},
\end{equation} 
to which the subsolutions are defined with
\[
H(p, u(p), \nabla_H u(p))=\sup\left\{\la \nabla_H u(p),  (p^{-1}\cdot \xi)_h\ra:  \xi\in S_p(u)\right\}
\]
while the supersolutions are defined with
\[
H(p, u(p), \nabla_H u(p))=\sup\left\{\la \nabla_H u(p),  (p^{-1}\cdot \xi)_h\ra:  \xi\in \hat{S}_p(u)\right\}.
\]
Here we set
\[
\hat{S}_p(u):=\{\xi\in \H_p\cap \Omega: u(\xi)\leq u(p)\}.
\]

The major difficulty lies at the degeneracy and nonlocal nature of the first order operator.
We mainly study uniqueness and existence of solutions to this equation in a slightly restrictive setting, assuming that the solutions $u$ satisfy
\begin{numcases}{}
u=K \quad &\text{on $\pO$, }\label{dirichlet} \\
u\leq K \quad &\text{in $\Oba$}\label{coercive-like}
\end{numcases}
for some $K\in \R$. It turns out that we can obtain a unique solution if $f$ satisfies the same conditions. 

These conditions can be viewed as a bounded-domain variant of coercivity assumption on $u$. If $u\in C(\H)$ is coercive in $\H$, that is, 
\[
\lim_{R\to \infty} \inf_{|p|_G\geq R} u(p)\to \infty,
\]
then we can take $K\in \R$ large and $\Omega=\{p: u(p)<K\}$ such that both \eqref{dirichlet} and \eqref{coercive-like} hold. 

\subsection{Definition and basic properties of solutions}

Let us first present the definition of subsolutions of \eqref{hj eq}.

\begin{defi}[Subsolutions]\label{def sub}
Let $f\in USC(\Omega)$ be locally bounded in $\Omega$. A locally bounded upper semicontinuous function $u: \Omega\to \R$ is called a subsolution of \eqref{hj eq} if whenever there exist $p\in \Omega$ and $\varphi\in C^1(\Omega)$ such that $u-\varphi$ attains a maximum at $p$, 
\beq\label{sub-property}
u(p)+\sup\left\{\la \nabla_H \varphi(p),  (p^{-1}\cdot \xi)_h\ra:\ \xi\in S_p(u) \right\}\leq f(p).
\eeq
\end{defi}
Note that the supremum in \eqref{sub-property} does make sense. We consider naturally
\[
\sup\left\{\la \nabla_H \varphi(p),  (p^{-1}\cdot \xi)_h\ra:\ \xi\in S_p(u) \right\}=0
\]
if $\nabla_H \varphi(p)=0$. It is also easily seen that $S_p(u)\neq \emptyset$ provided that $\nabla_H\varphi(p)\neq 0$ and $u-\varphi$ attains a local maximum at $p$. 

We next give a definition of supersolutions. 

\begin{defi}[Supersolutions]\label{def super1}
Let $f\in LSC(\Omega)$ be locally bounded in $\Omega$. 
A locally bounded lower semicontinuous function $u: \Omega\to \R$ is called a weak supersolution of \eqref{hj eq} if whenever there exist $p\in \Omega$ and $\varphi\in C^1(\Omega)$ such that $u-\varphi$ attains a minimum at $p$, 
\beq\label{super-property}
u(p)+\sup\left\{\la \nabla_H \varphi(p),  (p^{-1}\cdot \xi)_h\ra:\ \xi\in \hat{S}_p(u) \right\}\geq f(p).
\eeq
\end{defi}

For any point $p\in \Omega$, we also say that $u\in USC(\Omega)$ (resp, $LSC(\Omega)$) satisfies the subsolution (resp., supersolution) property at $p$ if \eqref{sub-property} (resp., \eqref{super-property}) holds for any $\varphi\in C^1(\Omega)$ such that $u-\varphi$ attains a maximum (resp., minimum) at $p$.

As a standard remark in the theory of viscosity solutions, we may replace the maximum in the definitions above by local maximum or strict maximum. 

\begin{lem}[Upper bound]\label{lem sub}
Suppose that $\Omega$ is a domain in $\H$. Let $f\in USC(\Omega)$ be locally bounded in $\Omega$. If $u\in USC(\Omega)$ is a subsolution of \eqref{hj eq}. Then $u\leq f$ in $\Omega$.
\end{lem}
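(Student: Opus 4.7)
The plan is to reduce the pointwise inequality $u\le f$ to the subsolution inequality evaluated at a well chosen maximum point, via a standard doubling/penalization argument adapted to the Heisenberg gauge. Fix an arbitrary $p_0\in\Omega$; I want to show $u(p_0)\le f(p_0)$. By upper semicontinuity and local boundedness of $u$, I choose $r>0$ with $\overline{B}_r(p_0)\subset\Omega$ and $u$ bounded above on $\overline{B}_r(p_0)$. For every $\alpha>0$, the function
\[
\Psi_\alpha(p):=u(p)-\alpha\,|p_0^{-1}\cdot p|_G^4
\]
is USC on the compact set $\overline{B}_r(p_0)$ (note that $|p_0^{-1}\cdot p|_G^4=\big((x^2+y^2)^2+16z^2\big)\big|_{p_0^{-1}p}$ is a polynomial, hence smooth), so it attains a maximum at some $p_\alpha\in\overline{B}_r(p_0)$. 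Comparing with the value at $p_0$ gives $u(p_\alpha)\ge u(p_0)+\alpha|p_0^{-1}\cdot p_\alpha|_G^4$, and the upper bound on $u$ forces $|p_0^{-1}\cdot p_\alpha|_G\to 0$ as $\alpha\to\infty$; in particular $p_\alpha$ is interior to $B_r(p_0)$ for all large $\alpha$.

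Next I would feed the test function $\varphi_\alpha(p):=\alpha\,|p_0^{-1}\cdot p|_G^4$ into Definition \ref{def sub}: since $u-\varphi_\alpha$ has a local maximum at $p_\alpha$,
\[
u(p_\alpha)+\sup\big\{\langle\nabla_H\varphi_\alpha(p_\alpha),(p_\alpha^{-1}\cdot\xi)_h\rangle:\xi\in S_{p_\alpha}(u)\big\}\le f(p_\alpha).
\]
The crucial observation, and the only real content of the proof, is that this supremum is always nonnegative. If $\nabla_H\varphi_\alpha(p_\alpha)=0$, the convention stated after Definition \ref{def sub} gives the value $0$. If $\nabla_H\varphi_\alpha(p_\alpha)\neq 0$, then by the remark immediately following Definition \ref{def sub} (namely that $S_{p_\alpha}(u)$ is nonempty at any local maximum with nonzero horizontal test gradient), one can pick $\xi\in S_{p_\alpha}(u)$ along the half-line in $\H_{p_\alpha}$ emanating from $p_\alpha$ in the direction of $\nabla_H\varphi_\alpha(p_\alpha)$, producing a strictly positive inner product.

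Combining these two displays yields $u(p_\alpha)\le f(p_\alpha)$, hence $u(p_0)\le u(p_\alpha)\le f(p_\alpha)$. Finally, sending $\alpha\to\infty$ and using $p_\alpha\to p_0$ together with the upper semicontinuity of $f$, I obtain $u(p_0)\le \limsup_{\alpha\to\infty} f(p_\alpha)\le f(p_0)$, completing the proof. The main subtle point is the sign of the nonlocal supremum at the max point $p_\alpha$; everything else is standard penalization once the smoothness of the Kor\'{a}nyi gauge to the fourth power is invoked.
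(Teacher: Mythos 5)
Your overall strategy coincides with the paper's: penalize at $p_0$ with $\alpha|p_0^{-1}\cdot p|_G^4$, apply the subsolution inequality at the resulting maximum point $p_\alpha$, argue that the nonlocal supremum there is nonnegative, and conclude from $p_\alpha\to p_0$ and the upper semicontinuity of $f$. (The paper organizes this in two steps, first proving $u\le f$ at points where $u$ can be tested and then reducing the general case to that one by the same penalization; that difference is cosmetic.)

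However, your justification of what you yourself call the only real content of the proof --- the nonnegativity of the supremum when $\nabla_H\varphi_\alpha(p_\alpha)\neq 0$ --- is incorrect. You claim one can pick $\xi\in S_{p_\alpha}(u)$ on the half-line in $\H_{p_\alpha}$ emanating from $p_\alpha$ in the direction of $+\nabla_H\varphi_\alpha(p_\alpha)$, producing a strictly positive inner product. Nothing guarantees that such a $\xi$ satisfies $u(\xi)<u(p_\alpha)$: along that half-line $\varphi_\alpha$ \emph{increases}, so the maximality of $u-\varphi_\alpha$ gives no information about $u$ there. The points of $S_{p_\alpha}(u)$ whose existence near $p_\alpha$ is actually guaranteed are those obtained by moving in the direction of $-\nabla_H\varphi_\alpha(p_\alpha)$, where $\varphi_\alpha$, and hence $u$, strictly decreases; these yield \emph{negative} inner products. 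In fact your claim is false whenever $u$ is h-quasiconvex: for $u(p)=\langle a,p_h\rangle$ with $a\neq 0$, the set $S_p(u)$ consists exactly of the $\xi\in\H_p\cap\Omega$ with $\langle\nabla_H\varphi(p),(p^{-1}\cdot\xi)_h\rangle<0$, so the supremum equals $0$ and no admissible $\xi$ gives a positive value. The correct argument, which is the one in the paper, is to take a sequence $\xi_j\in S_{p_\alpha}(u)$ with $\xi_j\to p_\alpha$ (such a sequence exists by the descent-direction observation above); then $(p_\alpha^{-1}\cdot\xi_j)_h\to 0$, so the supremum is at least $\limsup_{j}\langle\nabla_H\varphi_\alpha(p_\alpha),(p_\alpha^{-1}\cdot\xi_j)_h\rangle=0$. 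With that repair, the remainder of your proof goes through.
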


\begin{proof}
Let us first show $u\leq f$ at all points where $u$ can be tested. Assume that there exists $\varphi\in C^1(\Omega)$ such that $u-\varphi$ attains a maximum at some $p_0\in \Omega$. If $\nabla_H\varphi(p_0)=0$, then we immediately obtain the desired inequality $u(p_0)\leq f(p_0)$ by Definition \ref{def sub}. 

If $\nabla_H\varphi(p_0)\neq 0$ and thus $S_{p_0}(u)\neq \emptyset$, we can take a sequence $\xi_j\in S_{p_0}(u)$ such that $\xi_j\to p_0$ as $j\to \infty$. This yields
\[
\sup\left\{\la \nabla_H \varphi(p_0),  (p_0^{-1}\cdot \xi)_h\ra:\ \xi\in S_{p_0}(u) \right\}
\geq \limsup_{j\to \infty}\la \nabla_H \varphi(p_0),\ (p_0^{-1}\cdot \xi_j)_h\ra=0,
\]
As a result, we get $u(p_0)\leq f(p_0)$ by Definition \ref{def sub} again. 

It remains to show that $u\leq f$ holds also at those points where $u$ cannot be tested. Fix $p_0\in \Omega$ arbitrarily. Since $u$ is locally bounded, for $\vep>0$ small, we can find $p_\vep\in \Omega$ in a neighborhood of $p_0$ such that 
\[
p\mapsto u(p)-\frac{1}{\vep} |p_0^{-1}\cdot p|_G^4
\]
attains a local maximum in $\Omega$ at $p_\vep$. In particular, we have 
\beq\label{lem sub eq1}
u(p_0)\leq u(p_\vep)-\frac{1}{\vep} |p_0^{-1}\cdot p_\vep|_G^4.
\eeq
By the local boundedness of $u$, we have $p_\vep\to p_0$ as $\vep\to 0$.
Noticing that $u$ is tested from above by a smooth function at $p_\vep$, we may apply our result shown in the first part to deduce 
$u(p_\vep)\leq f(p_\vep). $
It follows from \eqref{lem sub eq1} that $u(p_0)\leq f(p_\vep)$.
Sending $\vep\to 0$ and applying the upper semicontinuity of $f$, we end up with $u(p_0)\leq f(p_0)$.
\end{proof}

Let us present more properties for  \eqref{hj eq}.
\begin{lem}[Basic properties]\label{lem basic property}
Suppose that $\Omega$ is a domain in $\H$. For each locally bounded $f\in USC(\Omega)$, let $\A[f]$ denote the set of all subsolutions of \eqref{hj eq}.  Then the following properties hold. 
\begin{enumerate}
\item[(i)] (Monotonicity) For any locally bounded $f_1, f_2\in USC(\Omega)$ satisfying $f_1\leq f_2$ in $\Omega$, $\A[f_1]\subset \A[f_2]$ holds. 
\item[(ii)] (Constant addition invariance) For any $c\in \R$ and $u\in \A[f]$, $u+c\in \A[f+c]$ holds.
\item[(iii)] (Left translation invariance) For any $\eta\in \H$, $u_\eta\in \A[f_\eta]$ holds, where $u_\eta$ and $f_\eta$ are given by 
\[
u_\eta(p)=u(\eta\cdot p), \quad f_\eta(p)=f(\eta\cdot p), \quad p\in \Omega_\eta.
\]
\end{enumerate}
Analogous properties to the above also hold for supersolutions of \eqref{hj eq}. 
\end{lem}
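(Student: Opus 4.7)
The plan is to verify each of the three invariance properties directly from Definition \ref{def sub}, exploiting the fact that each operation behaves nicely with the sublevel set $S_p(u)$ and with the horizontal gradient. The arguments are essentially bookkeeping, and the only non-trivial ingredient is that $\nabla_H$ is left-invariant, which is precisely how the vector fields $X_1, X_2$ were defined.

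For (i), monotonicity is immediate: if $u\in \mathcal{A}[f_1]$ and $\varphi\in C^1(\Omega)$ tests $u$ from above at $p\in \Omega$, then by Definition \ref{def sub},
\[
u(p)+\sup\{\la \nabla_H\varphi(p),(p^{-1}\cdot\xi)_h\ra:\xi\in S_p(u)\}\leq f_1(p)\leq f_2(p),
\]
so $u\in\mathcal{A}[f_2]$.

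For (ii), given $u\in\mathcal{A}[f]$ and $c\in\R$, observe that $v:=u+c$ is upper semicontinuous and locally bounded, and the sublevel set is translation-invariant in the values: $S_p(v)=S_p(u)$ since $v(\xi)<v(p)\iff u(\xi)<u(p)$. If $\varphi\in C^1(\Omega)$ tests $v$ from above at $p$, then $\psi:=\varphi-c$ tests $u$ from above at $p$ with $\nabla_H\psi=\nabla_H\varphi$, so
\[
v(p)+\sup\{\la\nabla_H\varphi(p),(p^{-1}\cdot\xi)_h\ra:\xi\in S_p(v)\}=u(p)+c+\sup\{\la\nabla_H\psi(p),(p^{-1}\cdot\xi)_h\ra:\xi\in S_p(u)\}\leq f(p)+c,
\]
giving $v\in\mathcal{A}[f+c]$.

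The main point is (iii). Fix $\eta\in\H$, set $\Omega_\eta:=\eta^{-1}\cdot\Omega$, and let $\varphi\in C^1(\Omega_\eta)$ test $u_\eta$ from above at some $p_0\in\Omega_\eta$. Define $\tilde\varphi(q):=\varphi(\eta^{-1}\cdot q)$ on $\Omega$; then $u-\tilde\varphi$ attains a maximum at $q_0:=\eta\cdot p_0\in\Omega$. Because $X_1,X_2$ are left-invariant, $\nabla_H\tilde\varphi(q_0)=\nabla_H\varphi(p_0)$. Next, since $\H_{q_0}=q_0\cdot\H_0=\eta\cdot p_0\cdot \H_0=\eta\cdot\H_{p_0}$, the map $\xi\mapsto\eta\cdot\xi$ is a bijection from $\H_{p_0}\cap\Omega_\eta$ onto $\H_{q_0}\cap\Omega$, and it preserves the strict sublevel condition: $u_\eta(\xi)<u_\eta(p_0)\iff u(\eta\cdot\xi)<u(q_0)$. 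Thus it restricts to a bijection $S_{p_0}(u_\eta)\to S_{q_0}(u)$. Moreover, for any $\xi\in\H_{p_0}$,
\[
(q_0^{-1}\cdot(\eta\cdot\xi))_h=((\eta\cdot p_0)^{-1}\cdot\eta\cdot\xi)_h=(p_0^{-1}\cdot\xi)_h,
\]
so the inner products appearing in the Hamiltonian match term by term. Applying the subsolution property of $u$ at $q_0$ with test function $\tilde\varphi$ yields
\[
u_\eta(p_0)+\sup\{\la\nabla_H\varphi(p_0),(p_0^{-1}\cdot\xi)_h\ra:\xi\in S_{p_0}(u_\eta)\}\leq f(q_0)=f_\eta(p_0),
\]
which is exactly the subsolution property for $u_\eta$ at $p_0$. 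The analogous statements for supersolutions are obtained by replacing $S_p$ with $\hat S_p$ throughout; the same bijection argument applies since $\xi\mapsto\eta\cdot\xi$ also preserves the non-strict sublevel condition $u(\xi)\leq u(p)$. The only subtle point worth double-checking is the identity $(q_0^{-1}\cdot\eta\cdot\xi)_h=(p_0^{-1}\cdot\xi)_h$, but this follows directly from associativity of the Heisenberg product and cancellation of $\eta^{-1}\cdot\eta$, so no obstacle arises.
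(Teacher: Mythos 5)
Your proof is correct, and it supplies exactly the routine verification that the paper omits (the paper states only that the lemma ``is quite straightforward from the structure of the Hamiltonian''). All the key points are handled properly: the value-invariance of $S_p$ under adding constants, the left-invariance of $\nabla_H$, the identity $\H_{\eta\cdot p}=\eta\cdot \H_p$, and the cancellation $(q_0^{-1}\cdot\eta\cdot\xi)_h=(p_0^{-1}\cdot\xi)_h$, so the Hamiltonian is preserved term by term under each operation.
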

We omit the details of the proof, since it is quite straightforward from the structure of the Hamiltonian.

\subsection{Comparison principle}\label{sec:comp}
We next establish a comparison principle for \eqref{hj eq} under the conditions \eqref{dirichlet} and \eqref{coercive-like}. General Dirichlet boundary problems will also be briefly discussed later.  

\begin{thm}[Comparison principle with constant boundary data]\label{thm comp}
Let $\Omega$ be a bounded domain in $\H$ and $f\in C(\Omega)$. Let $u\in USC(\Oba)$ and $v\in LSC(\Oba)$ be respectively a subsolution and a supersolution of \eqref{hj eq}. Assume in addition that 
\beq\label{comp add1}
u\leq v=K \quad \text{on $\partial\Omega$}
\eeq
and $u\leq K $ in $\Oba$ for some $K\in\R$. 
Then $u\leq v$ in $\Oba$. 
\end{thm}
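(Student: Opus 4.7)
My plan is to run the classical viscosity-comparison argument by doubling of variables, using a Kor\'{a}nyi gauge penalization tailored to the Heisenberg structure and exploiting the coercivity-like hypothesis $u\leq K=v$ on $\partial\Omega$ to compensate for the $p$-dependence of the nonlocal constraint $\H_p\cap\Omega$.

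Assume by contradiction that $M:=\max_{\Oba}(u-v)>0$. Since $u\leq v$ on $\partial\Omega$ by \eqref{comp add1}, the maximum is attained at some interior $\hat p\in\Omega$; the accompanying bound $u\leq K$ in $\Oba$ then forces $v(\hat p)<K$. I would introduce the penalized functional
\[
\Phi_\varepsilon(p,q):=u(p)-v(q)-\frac{1}{\varepsilon}|p^{-1}\cdot q|_G^{\alpha}
\]
on $\Oba\times\Oba$ for a suitably large exponent $\alpha$ (adjusted together with an auxiliary penalization of the vertical coordinate, if necessary), and let $(p_\varepsilon,q_\varepsilon)$ be a maximizer. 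Standard viscosity arguments give $(p_\varepsilon,q_\varepsilon)\to(\hat p,\hat p)$, $\varepsilon^{-1}|p_\varepsilon^{-1}\cdot q_\varepsilon|_G^{\alpha}\to 0$, and $p_\varepsilon,q_\varepsilon\in\Omega$ for small $\varepsilon$. Since $p_\varepsilon\neq q_\varepsilon$, the test functions $\varphi_\varepsilon(p):=\varepsilon^{-1}|p^{-1}\cdot q_\varepsilon|_G^{\alpha}$ and $\psi_\varepsilon(q):=-\varepsilon^{-1}|p_\varepsilon^{-1}\cdot q|_G^{\alpha}$ are $C^1$ near $p_\varepsilon$ and $q_\varepsilon$, and Definitions \ref{def sub}--\ref{def super1} yield
\[
u(p_\varepsilon)+\sup_{\xi\in S_{p_\varepsilon}(u)}\langle A_\varepsilon,(p_\varepsilon^{-1}\cdot\xi)_h\rangle\leq f(p_\varepsilon),\qquad v(q_\varepsilon)+\sup_{\eta\in\hat S_{q_\varepsilon}(v)}\langle B_\varepsilon,(q_\varepsilon^{-1}\cdot\eta)_h\rangle\geq f(q_\varepsilon),
\]
where $A_\varepsilon:=\nabla_H^p\varphi_\varepsilon(p_\varepsilon)$ and $B_\varepsilon:=\nabla_H^q\psi_\varepsilon(q_\varepsilon)$. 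Subtracting these and using the continuity of $f$, reaching a contradiction reduces to showing that the quantity $\sup_{\eta}\langle B_\varepsilon,\cdot\rangle-\sup_{\xi}\langle A_\varepsilon,\cdot\rangle$ is $o(1)$ as $\varepsilon\to 0$.

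To match the two nonlocal suprema, I would use left-translation: given $\eta\in\H_{q_\varepsilon}$, set $\xi:=(p_\varepsilon\cdot q_\varepsilon^{-1})\cdot\eta\in\H_{p_\varepsilon}$, which preserves horizontal coordinates in the sense that $(p_\varepsilon^{-1}\cdot\xi)_h=(q_\varepsilon^{-1}\cdot\eta)_h$. Direct computation of $\nabla_H^p|p^{-1}\cdot q|_G^{\alpha}$ and $\nabla_H^q|p^{-1}\cdot q|_G^{\alpha}$ quantifies the discrepancy $A_\varepsilon-B_\varepsilon$, whose magnitude encodes the non-commutativity of $\H$ and is controlled by the vertical part of $p_\varepsilon^{-1}\cdot q_\varepsilon$; with $\alpha$ taken large enough (or after adding a small Euclidean correction to $\Phi_\varepsilon$ that controls $z_{p_\varepsilon}-z_{q_\varepsilon}-\tfrac12(x_{q_\varepsilon}y_{p_\varepsilon}-x_{p_\varepsilon}y_{q_\varepsilon})$ separately), this discrepancy times $\diam(\Omega)$ tends to zero. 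The serious obstacle, and the main difficulty of the proof, is that the translate $\xi$ need not remain in $\Omega$ even when $\eta$ is interior, so $\xi$ may fail to be an admissible candidate in $S_{p_\varepsilon}(u)$ and the matching breaks down. This is precisely where the coercivity-like hypothesis becomes indispensable: by the lower semicontinuity of $v$ together with $v=K$ on $\partial\Omega$, any $\eta\in\hat S_{q_\varepsilon}(v)$ close to $\partial\Omega$ satisfies $v(\eta)$ close to $K$, which combined with $v(q_\varepsilon)\to v(\hat p)<K$ excludes such $\eta$ from $\hat S_{q_\varepsilon}(v)$ for $\varepsilon$ small; hence the relevant admissible $\eta$ lie in a compact subset of $\Omega$ on which the left-translation $(p_\varepsilon\cdot q_\varepsilon^{-1})\cdot(\cdot)$ preserves $\Omega$, and the matching is valid uniformly.

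Once the matching is secured up to $o(1)$ error, combining the two viscosity inequalities gives $u(p_\varepsilon)-v(q_\varepsilon)\leq f(p_\varepsilon)-f(q_\varepsilon)+o(1)$; passing to the limit yields $M\leq 0$, contradicting $M>0$. I expect the hardest step to be the boundary-compactness argument described above: without the restrictions $u\leq K$ in $\Oba$ and $v=K$ on $\partial\Omega$, the $p$-dependence of $\H_p\cap\Omega$ would break the symmetry that the standard doubling mechanism relies on and no uniform matching of test points would be available; it is the positive gap $K-v(\hat p)\geq M$ between the boundary value and the interior value of $v$ that restores enough interior compactness to close the argument, and this explains why the theorem as stated cannot be extended to arbitrary Dirichlet data without strengthening the convexity assumptions on $\Omega$.
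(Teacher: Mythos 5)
Your overall architecture is the same as the paper's: doubling of variables, transporting admissible points $\eta$ for the supersolution to candidates $\xi=p_\vep\cdot(q_\vep^{-1}\cdot\eta)$ for the subsolution, and using $v=K$ on $\pO$ together with $u\le K$ and the gap $u(p_\vep)-v(q_\vep)\ge\sigma$ to force the relevant $\eta$ into a compact subset of $\Omega$; your explanation of why the coercivity-like hypotheses are indispensable is exactly the paper's reasoning. However, there is a genuine gap at the heart of the matching step: you never verify that the transported point $\xi$ belongs to $S_{p_\vep}(u)$. Admissibility requires the \emph{value} condition $u(\xi)<u(p_\vep)$, not merely $\xi\in\H_{p_\vep}\cap\Omega$, and it does not follow from $v(\eta)\le v(q_\vep)$. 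The paper extracts it from the maximality of $\Phi_\vep$, which gives
\[
u(\xi)-u(p_\vep)\le v(\eta)-v(q_\vep)+\frac{1}{\vep}\left(|\xi\cdot\eta^{-1}|_G^4-|p_\vep\cdot q_\vep^{-1}|_G^4\right),
\]
so one needs the penalization to \emph{strictly decrease} along the matching; this is arranged by shrinking the translation to $\xi_{\vep,n}=p_\vep\cdot(s_nw_n)$ with $s_n<1$ and checking that $\mu'(1)=\vep\la\nabla_H\varphi_2(q_\vep),w_n\ra>0$ in the nontrivial case. Without some such device, the subsolution inequality says nothing about your $\xi$, and the two suprema cannot be compared. (You also omit the degenerate case where the supersolution's supremum vanishes, but that is disposed of directly via $u\le f$ and the supersolution inequality.)

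The second problem is your choice of the left-invariant penalization $|p^{-1}\cdot q|_G^{\alpha}$. The paper uses the right-invariant gauge $|p\cdot q^{-1}|_G^{4}$ precisely because (i) it makes $\nabla_H\varphi_1(p_\vep)=\nabla_H\varphi_2(q_\vep)$ exactly, so there is no discrepancy $A_\vep-B_\vep$ to estimate, and (ii) it is invariant under the simultaneous right translation $\xi=p_\vep\cdot w$, $\eta=q_\vep\cdot w$, so that $\xi\cdot\eta^{-1}=p_\vep\cdot q_\vep^{-1}$ and the penalization difference above vanishes identically at $s=1$. With the left-invariant gauge neither holds: writing $p_\vep^{-1}\cdot q_\vep=(a,b,c)$, one computes $A_\vep-B_\vep=-32\vep^{-1}c\,(b,-a)$, and $\xi^{-1}\cdot\eta=w^{-1}\cdot(a,b,c)\cdot w=(a,b,c+aw_2-bw_1)$ with $w$ as large as $\diam\Omega$. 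The standard penalization bounds only give $|(a,b)|=O(\vep^{1/4})$ and $|c|=O(\vep^{1/2})$, so $|A_\vep-B_\vep|=O(\vep^{-1/4})$ and the change in $\vep^{-1}|\cdot|_G^4$ under conjugation is of order $\vep^{-1}(aw_2-bw_1)^2=O(\vep^{-1/2})$; both blow up, and raising $\alpha$ only slows the blow-up without curing it. Your proposed auxiliary vertical penalization might conceivably repair this, but you do not carry it out, whereas replacing $d_H$ by $\tilde d_H$ in $\Phi_\vep$ removes the issue entirely.
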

\begin{proof}
Assume by contradiction that $\max_{\Oba} (u-v)=\sigma$ for some $\sigma>0$. For $\vep>0$ small, we consider 
\[
\Phi_\vep(p, q)=u(p)-v(q)-\frac{|p\cdot q^{-1}|_G^4}{\vep}
\]
for $p, q\in \Oba$. 
 It is clear that $\Phi_\vep$ attains a positive maximum in $\Oba\times \Oba$. Let $(p_\vep, q_\vep)\in \Oba\times \Oba$ be a maximizer. We thus obtain 
\beq\label{cp eq2}
\Phi_\vep(p_\vep, q_\vep)\geq \sigma,
\eeq
which implies that 
\beq\label{cp eq0}
u(p_\vep)-v(q_\vep)\geq \sigma
\eeq
and 
\beq\label{cp eq1}
\frac{|p_\vep\cdot q_\vep^{-1}|_G^4}{\vep}\leq u(p_\vep)-v(q_\vep)-\sigma
\eeq
for all $\vep>0$. Due to the boundedness of $u$ and $v$, it follows from \eqref{cp eq1} that $|p_\vep\cdot q_\vep^{-1}|_G\to 0$,
which in turn implies that $d_H(p_\vep, q_\vep)=|p_\vep^{-1}\cdot q_\vep|_G\to 0$ as $\vep\to 0$.

By taking a subsequence, still indexed by $\vep$, we have $p_\vep, q_\vep\to p_0$ as $\vep\to 0$ for some point $p_0\in \Oba$. Thanks to \eqref{cp eq2} and the assumption that $u\leq v$ on $\partial \Omega$, we deduce that $p_0\in \Omega$ and $p_\vep, q_\vep\in \Omega$ for all $\vep>0$ small. 

We now apply the definition of subsolutions and supersolutions. Note that $u-\varphi_1$ attains a maximum at $p_\vep$ and $v-\varphi_2$ attains a minimum at $q_\vep$, where we take
\[
\varphi_1(p)= v(q_\vep)+{|p\cdot q_\vep^{-1}|_G^4\over \vep},
\]
\[
\varphi_2(q)=u(p_\vep)-{|p_\vep\cdot q^{-1}|_G^4\over \vep}
\]
for $p, q\in \H$. Writing 
\[
p_\vep=\left(x_{p_\vep}, y_{p_\vep}, z_{p_\vep}\right),\quad q_\vep=\left(x_{q_\vep}, y_{q_\vep}, z_{q_\vep}\right),
\]
we see, by direct calculations, that 
\beq\label{cp new5}
\begin{aligned}
&\nabla_H \varphi_1(p_\vep)=\nabla_H \varphi_2 (q_\vep)\\
&={1\over \vep}\big(4A_\vep (x_{p_\vep}-x_{q_\vep})-16 B_\vep(y_{p_\vep}+y_{q_\vep}),\ 4A_\vep(y_{p_\vep}-y_{q_\vep})+16B_\vep(x_{p_\vep}+x_{q_\vep})\big),
\end{aligned}
\eeq
where 
\[
A_\vep=(x_{p_\vep}-x_{q_\vep})^2+(y_{p_\vep}-y_{q_\vep})^2,\quad 
B_\vep=z_{p_\vep}-z_{q_\vep}-{1\over 2}x_{p_\vep}y_{q_\vep}+{1\over 2}y_{p_\vep}x_{q_\vep}.
\]
We next discuss two different cases. 

Case 1. Suppose that there exists a subsequence, again indexed by $\vep$ for simplicity of notation, such that 
\[
\sup\{\la \nabla_H \varphi_2(q_\vep),  (q_\vep^{-1}\cdot \eta)_h\ra: \eta\in \hat{S}_{q_\vep}(v)\}=0.
\]
Then applying the definition of supersolutions to $v$, we get
\beq\label{cp eq3}
v(q_\vep)\geq f(q_\vep).
\eeq
On the other hand, by Lemma \ref{lem sub}, we obtain 
\beq\label{cp eq4}
u(p_\vep)\leq f(p_\vep).
\eeq
Combining \eqref{cp eq3} and \eqref{cp eq4}, we have 
\[
u(p_\vep)-v(q_\vep)\leq f(p_\vep)-f(q_\vep)
\]
for $\vep>0$ small. Letting $\vep\to 0$, we use the continuity of $f$ to get
\[
\limsup_{\vep\to 0} \left(u(p_\vep)-v(q_\vep)\right)\leq 0,
\]
which is a contradiction to \eqref{cp eq0}. 

Case 2. Suppose that 
\[
\sup\left\{\la \nabla_H \varphi_2(q_\vep),  (q_\vep^{-1}\cdot \eta)_h\ra: \eta\in \hat{S}_{q_\vep}(v)\right\}>0
\]
for all $\vep>0$ small. We then can find a sequence $\eta_{\vep, n}\in \hat{S}_{q_\vep}(v)$ such that 
\beq\label{cp new4}
\la \nabla_H \varphi_2(q_\vep),  (w_n)_h\ra\geq \sup\left\{\la \nabla_H \varphi_2(q_\vep),  (q_\vep^{-1}\cdot \eta)_h\ra: \eta\in \hat{S}_{q_\vep}(v)\right\}-{1\over n}
\eeq
as $n\to \infty$, where $w_n=q_\vep^{-1}\cdot \eta_{\vep, n}\in \H_0$.
In particular, we have $v(\eta_{\vep, n})\leq v(q_\vep)$.

In view of  \eqref{coercive-like} and \eqref{cp eq0}, we get $v(q_{\vep})\leq K-\sigma$.
On the other hand, since $v=K$ due to \eqref{comp add1}, we see that as $n\to \infty$ and  and $\vep\to 0$, $\eta_{\vep, n}$ cannot converge to a boundary point. In other words, there exists $r>0$ such that $B_r(\eta_{\vep, n})\subset \Omega$.
We may assume that $\vep$ is small enough so that $r>|p_\vep\cdot q_{\vep}^{-1} |_G$. 

Besides, it also follows from \eqref{cp new4} that
\beq\label{cp new1}
\la \nabla_H \varphi_2(q_\vep),  (w_n)_h\ra>0
\eeq
for any $n\geq 1$ large. 
Taking 
\[
\mu(s)=|p_\vep\cdot (s w_n)\cdot \eta_{\vep, n}^{-1} |_G^4=|p_\vep\cdot (s-1)w_n\cdot q_\vep^{-1}|_G^4, \quad s\in \R
\]
with $sw_n$ denoting the usual constant multiple of $w_n$ by the factor $s$, i.e.,  
\[
s w_n=(s x_{w_n}, s y_{w_n}, 0)\quad\text{for }\ w_n=(x_{w_n}, y_{w_n}, 0),
\]
we get by direct calculations
 \[
 \begin{aligned}
 \mu'(s)&=4A_{\vep, s}(x_{p_\vep}-x_{q_\vep}+(s-1) x_{w_n}) x_{w_n}+4A_{\vep, s}(y_{p_\vep}-y_{q_\vep}+(s-1) y_{w_n}) y_{w_n}\\
 &\quad -16B_{\vep, s} (y_{p_\vep}+y_{q_\vep})x_{w_n}+16B_{\vep, s}(x_{p_\vep}+x_{q_\vep})y_{w_n},
 \end{aligned}
 \]
where we let 
\[
A_{\vep, s}=(x_{p_\vep}-x_{q_\vep}+(s-1)x_{w_n})^2+(y_{p_\vep}-y_{q_\vep}+(s-1)y_{w_n})^2,
\]
\[
B_{\vep, s}=z_{p_\vep}-z_{q_\vep}-{1\over 2} x_{p_\vep} y_{q_\vep}+{1\over 2}y_{p_\vep}x_{q_\vep}-{1\over 2} (s-1) x_{w_n} y_{q_\vep}+{1\over 2} (s-1) y_{w_n} x_{q_\vep}.
\]
It is then clear that 
\[
\begin{aligned}
\mu'(1)&=\big(4A_\vep(x_{p_\vep}-y_{q_\vep})-16 B_\vep(y_{p_\vep}+y_{q_\vep})\big) x_{w_n}+\big(4A_\vep(y_{p_\vep}-y_{q_\vep})+16B_\vep(x_{p_\vep}+x_{q_\vep})\big)y_{w_n}\\
&= \vep\la \nabla_H \varphi_2(q_\vep), w_n\ra.
\end{aligned}
\]
Owing to \eqref{cp new1}, we are led to $\mu'(1)>0$, which implies 
\[
\mu(s)=|p_\vep\cdot (s w_n)\cdot \eta_{\vep, n}^{-1} |_G^4< |p_\vep\cdot q_{\vep}^{-1} |_G^4
\]
for any $s<1$ sufficiently close to $1$. This amounts to saying that we can take $s_n\in (0, 1)$ such that $s_n\to 1$ as $n\to \infty$ and 
\beq\label{cp new2}
|\xi_{\vep, n}\cdot \eta_{\vep, n}^{-1}|_G^4<|p_\vep\cdot q_{\vep}^{-1} |_G^4
\eeq
for $\xi_{\vep, n}=p_\vep\cdot (s_n w_n)$. This yields $\xi_{\vep, n}\in B_r(\eta_{\vep, n})$ and thus $\xi_{\vep, n}\in \Omega$. It is also clear that $\xi_{\vep, n}\in \H_{p_{\vep}}$ and 
\beq\label{cp new3}
\left|\left(p_\vep^{-1}\cdot \xi_{\vep, n}\right)_h -\left(q_\vep^{-1}\cdot \eta_{\vep, n}\right)_h\right|\to 0\quad \text{as $n\to \infty$}.
\eeq

In view of the maximality of $\Phi_\vep$ at $(p_\vep, q_\vep)$, we have 
\[
u(\xi_{\vep, n})-v(\eta_{\vep, n})-{1\over \vep} |\xi_{\vep, n}\cdot \eta_{\vep, n}^{-1}|_G^4\leq u(p_\vep)-v(q_\vep)-{1\over \vep}|p_\vep\cdot q_\vep^{-1} |_G^4
\]
which by \eqref{cp new2} yields 
\[
u(\xi_{\vep, n})-u(p_{\vep})< v(\eta_{\vep, n})-v(q_\vep)\leq 0.
\]
We have shown that $\xi_{\vep, n}\in S_{p_\vep}(u)$. Adopting the definition of subsolutions, we deduce that 
\[
u(p_\vep)+\la \nabla_H\varphi_1(p_\vep),\ \left(p_\vep^{-1}\cdot  \xi_{\vep, n}\right)_h\ra\leq f(p_\vep).
\]
On the other hand,  we can also apply the definition of supersolutions, together with \eqref{cp new4}, to get
\beq\label{general comp2}
v(q_\vep)+\la \nabla_H\varphi_2(q_\vep),\ \left(q_\vep^{-1}\cdot  \eta_{\vep, n}\right)_h\ra\geq f(q_\vep)-{1\over n}
\eeq
for $n\geq 1$ large. 
Combining these two inequalities, we use \eqref{cp new5} to obtain
\[
u(p_\vep)-v(q_\vep)\leq f(p_\vep)-f(q_\vep)-\la \nabla_H\varphi_1(p_\vep), \left(p_\vep^{-1}\cdot \xi_{\vep, n}\right)_h -\left(q_\vep^{-1}\cdot \eta_{\vep, n}\right)_h \ra+{1\over n}.
\]
Sending $n\to \infty$ and applying \eqref{cp new3}, we get
\[
u(p_\vep)-v(q_\vep)\leq f(p_\vep)-f(q_\vep),
\]
which is a contradiction to \eqref{cp eq0} and the continuity of $f$. 
\end{proof}

It is possible to give a slightly different comparison theorem for more general Dirichlet boundary problems under the h-convexity assumption on $\Omega$. To this end, we prove the following lemma using the intrinsic cone property of h-convexity given in \cite[Theorem 1.4]{ArCaMo}; see also \cite{MoM} for regularity results related to this property. 
\begin{lem}\label{rmk convex boundary}
Let $\Omega \subset \H$ be an open set. If $\Omega$ is h-convex, then for every $p\in \partial \Omega$ and $q\in \H_p\cap \Omega$, the horizontal segment $(p, q]$ joining $p$ and $q$ stays in $\Omega$. 
\end{lem}
\begin{proof}
Assume by contradiction that there exist $p \in \partial \Omega$, $q \in \H_p \cap \Omega$ such that the half-open horizontal segment $(p, q]$ does not stay in $\Omega$. Let $w$ be the closest point to $q$ on $[q,p)$ that is not in $\Omega$, i.e., $w=p\cdot \lambda_0(p^{-1}\cdot q)$, where
\[
\lambda_0=\sup\{\lambda\geq 0: p\cdot \lambda(p^{-1}\cdot q)\notin \Omega\}.
\]
 Then $w \in \partial \Omega$, $q\in \H_w$ and $w\cdot \lambda(w^{-1}\cdot q)\in \Omega$ for all $\lambda\in (\lambda_0, 1)$. Such  $w$ is a so-called non-characteristic point defined in \cite{ArCaMo}. In view of \cite[Theorem 1.4]{ArCaMo}, the h-convexity of $\Omega$ implies the existence of an intrinsic cone in the exterior of $\Omega$ with vertex $w$ and axis along the horizontal segment $(p, w]$, which further enables us to find a point $z \in (p, w)$ and $r>0$ such that $B_r(z) \subset \H \setminus \Omega$. Noticing that there exists a sequence $p_j \in \Omega$ such that $p_j \to p$ as $j \to \infty$, we can take $q_j=p_j \cdot p^{-1}\cdot q$ and $z_j = p_j \cdot p^{-1} \cdot z$ so that $q_j, z_j \in \H_{p_j}$ and $q_j\to q$, $z_j\to z$ as $j\to \infty$. 
 We choose $j>0$ large such that $q_j\in \Omega$ and $z_j\in B_r(z)$. This contradicts the h-convexity of $\Omega$, since $p_j, q_j \in \Omega$, but the point $z_j$ on the horizontal segment $[p_j, q_j]$ is not in $\Omega$.
\end{proof} 

\begin{thm}[Comparison principle under domain convexity]\label{thm comp2}
Let $\Omega$ be a bounded h-convex domain in $\H$ and $f\in C(\Omega)$. Let $u\in USC(\Oba)$ and $v\in LSC(\Oba)$ be respectively a subsolution and a supersolution of \eqref{hj eq}. 
If $u\leq v$ on $\partial \Omega$, then $u\leq v$ in $\Oba$. 
\end{thm}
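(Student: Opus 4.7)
The plan is to mirror the doubling-of-variables argument from the proof of Theorem \ref{thm comp}, keeping its two-case structure but replacing the role of the coercivity-type bound \eqref{coercive-like} by the h-convex-up-to-boundary hypothesis on $\Omega$. Suppose by contradiction that $\sigma := \max_{\Oba}(u-v) > 0$; since $u \leq v$ on $\partial\Omega$, upper/lower semicontinuity force this maximum into $\Omega$. Doubling through $\Phi_\vep(p,q) = u(p) - v(q) - |p\cdot q^{-1}|_G^4/\vep$ yields maximizers $(p_\vep, q_\vep) \in \Omega \times \Omega$ with $u(p_\vep) - v(q_\vep) \geq \sigma$ and $|p_\vep\cdot q_\vep^{-1}|_G \to 0$, both converging to a common point $p_0 \in \Omega$. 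The usual test functions $\varphi_1(p) = v(q_\vep) + |p\cdot q_\vep^{-1}|_G^4/\vep$ and $\varphi_2(q) = u(p_\vep) - |p_\vep\cdot q^{-1}|_G^4/\vep$ satisfy $\nabla_H\varphi_1(p_\vep) = \nabla_H\varphi_2(q_\vep)$, and the analysis splits into two cases according to whether the supersolution supremum at $q_\vep$ vanishes or is strictly positive.

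Case 1 (vanishing supersolution sup) is completely parallel to Theorem \ref{thm comp}: the supersolution property gives $v(q_\vep) \geq f(q_\vep)$, Lemma \ref{lem sub} gives $u(p_\vep) \leq f(p_\vep)$, and continuity of $f$ combined with $p_\vep, q_\vep \to p_0$ contradicts $u(p_\vep) - v(q_\vep) \geq \sigma$. In Case 2 one picks approximate maximizers $\eta_{\vep,n} \in \hat{S}_{q_\vep}(v)$ with $w_n = q_\vep^{-1}\cdot\eta_{\vep,n}$ and sets $\xi_{\vep,n} = p_\vep\cdot(s_n w_n)$ for a suitable $s_n \in (0,1)$. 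The polynomial analysis of $\mu(s) = |p_\vep\cdot(s-1)w_n\cdot q_\vep^{-1}|_G^4$ with $\mu'(1) > 0$ supplies a sequence $s_n \to 1^-$ with $\mu(s_n) < \mu(1)$, which forces $\xi_{\vep,n} \in S_{p_\vep}(u)$ via the maximality of $\Phi_\vep$ at $(p_\vep, q_\vep)$ — provided $\xi_{\vep,n} \in \Omega$. The h-convex-up-to-boundary property is exactly what supplies this inclusion: applied to the horizontal segment $\gamma(t) = p_\vep\cdot(tw_n)$ starting at $p_\vep \in \Omega$, either $p_\vep\cdot w_n \in \Omega$ (so plain h-convexity forces the whole closed segment into $\Omega$), or $p_\vep\cdot w_n \in \partial\Omega$ (so the strengthened property forces the open segment $\gamma([0,1))$ into $\Omega$); in either scenario $\gamma(s_n) \in \Omega$ for $s_n$ close to $1$. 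With $\xi_{\vep,n} \in S_{p_\vep}(u) \cap \Omega$ in hand, the subsolution inequality at $p_\vep$ and the supersolution inequality at $q_\vep$ combine via $(p_\vep^{-1}\cdot\xi_{\vep,n})_h = s_n (w_n)_h$ to yield $u(p_\vep) - v(q_\vep) \leq f(p_\vep) - f(q_\vep) + o(1)$ as $n\to\infty$, contradicting $u(p_\vep) - v(q_\vep) \geq \sigma$ in the limit $\vep \to 0$ by continuity of $f$.

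The main obstacle I expect is precisely ensuring that the endpoint $p_\vep\cdot w_n = (p_\vep\cdot q_\vep^{-1})\cdot\eta_{\vep,n}$ belongs to $\Oba$ for each approximating $\eta_{\vep,n}$. Since $\eta_{\vep,n} \in \Omega$ need not be uniformly interior, this right-translate — which differs from $\eta_{\vep,n}$ only by the small factor $p_\vep\cdot q_\vep^{-1}$ — could in principle leak outside $\Oba$, and this is exactly the point at which the coercivity bound was indispensable in Theorem \ref{thm comp}. The resolution will likely require refining the selection of $\eta_{\vep,n}$: either by restricting the supersolution supremum at $q_\vep$ to $\eta$ in a slightly shrunken interior subdomain $\Omega_\delta \Subset \Omega$ and using h-convex-up-to-boundary to transfer the restricted supremum back to the full one, or by pulling each $\eta_{\vep,n}$ slightly along the horizontal segment $[q_\vep, \eta_{\vep,n}] \subset \Omega$ toward $q_\vep$ while controlling the effect on $v$. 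Either route must exploit the structural feature, absent from pure h-convexity, that under h-convex-up-to-boundary every horizontal segment from an interior point of $\Omega$ meets $\partial\Omega$ only at its far endpoint — never from inside a proper subinterval.
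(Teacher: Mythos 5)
Your setup, your Case 1, and your reduction of the whole problem to the single question of where the comparison points live all match the paper's proof, and you have correctly diagnosed the obstruction: the endpoint $p_\vep\cdot w_n=(p_\vep\cdot q_\vep^{-1})\cdot\eta_{\vep,n}$ of your segment need not lie in $\Oba$ at all, so up-to-boundary h-convexity cannot even be applied to $t\mapsto p_\vep\cdot(tw_n)$, and your main line collapses exactly when the $\eta_{\vep,n}$ approach $\partial\Omega$ (when they stay uniformly interior, the proof of Theorem \ref{thm comp} goes through unchanged). But the repair is left as two directional hints, and neither is carried out; this is a genuine gap, because the repair requires two ideas that are absent from the proposal. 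The paper's actual resolution: let $\eta_{\vep,n}\to\eta_0\in\partial\Omega$, fix $\lambda<1$ (not $s_n\to 1$), and set $\eta^\lambda_{\vep,n}=q_\vep\cdot(\lambda w_n)$, $\xi^\lambda_{\vep,n}=p_\vep\cdot(\lambda w_n)$. Up-to-boundary h-convexity is applied to the segment joining the \emph{interior} limit $p_0$ to the \emph{boundary} point $\eta_0$, which keeps $\eta^\lambda=(1-\lambda)p_0+\lambda\eta_0$ in $\Omega$ and hence $\eta^\lambda_{\vep,n},\xi^\lambda_{\vep,n}\in\Omega$ for $n$ large and $\vep$ small. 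Crucially, the membership $\xi^\lambda_{\vep,n}\in S_{p_\vep}(u)$ is \emph{not} obtained from the maximality of $\Phi_\vep$ (that route would need $v(\eta^\lambda_{\vep,n})\le v(q_\vep)$, which is lost after the pull-back --- precisely the ``effect on $v$'' you flag and do not control); instead one uses $u\le v$ on $\partial\Omega$ together with the semicontinuity of $u$ and $v$ to produce a fixed ball $B_r(\eta_0)$ on which $u<u(p_\vep)$, and checks that $\xi^\lambda_{\vep,n}$ lands in $B_r(\eta_0)\cap\Omega$.

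The second missing ingredient is the error created by working with a fixed $\lambda<1$. The subsolution and supersolution inequalities now combine to give $\sigma\le \tfrac1n+(1-\lambda)\la\nabla_H\varphi_1(p_\vep),(w_n)_h\ra$ up to a vanishing $f$-difference, and the term $(1-\lambda)\la\nabla_H\varphi_1(p_\vep),(w_n)_h\ra$ does \emph{not} tend to zero as $n\to\infty$, contrary to the ``$o(1)$'' in your outline, which tacitly assumes $s_n\to1$. The paper bounds this residue by $\tfrac{1-\lambda}{\lambda}\left(f(p_\vep)-u(p_\vep)\right)\le\tfrac{1-\lambda}{\lambda}\left(f(p_\vep)-v(p_\vep)\right)$ using the subsolution inequality itself, and only then sends $n\to\infty$, $\vep\to0$, and finally $\lambda\to1$ to reach the contradiction. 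So your skeleton and your diagnosis are right, but the two steps that actually make Case 2 work --- certifying $\xi\in S_{p_\vep}(u)$ without the $\Phi_\vep$-maximality, and absorbing the $O(1-\lambda)$ residue before the last limit --- still need to be supplied.
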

\begin{proof}
The proof is almost the same as that of Theorem \ref{thm comp} except for some necessary modifications to handle this general  boundary condition. The only difference lies at the argument by contradiction in Case 2 on the occasion when $\eta_{\vep, n}\in \hat{S}_{q_\vep}$ converges to a boundary point $\eta_0\in \partial \Omega$ as $n\to \infty$ and $\vep\to 0$. (This gives a contradiction to the conditions \eqref{coercive-like} and \eqref{comp add1} in the setting of Theorem \ref{thm comp}.) 

Let us derive a contradiction only in this case. Thanks to the condition $u\leq v$ on $\partial \Omega$, we have $u(\eta_0)\leq v(\eta_0)$, which by \eqref{cp eq0} and the upper semicontinuity of $u$ yields the existence of $r>0$ such that 
\beq\label{general comp1}
B_r(\eta_0)\cap \Omega\subset \{ z \in \Omega: u(z) < u(p_\vep)\}
\eeq
for any $\vep>0$ small. 

Since $p_\vep, q_\vep\to p_0\in \Omega$, by Lemma \ref{rmk convex boundary} we can utilize the h-convexity of $\Omega$ to deduce that
\[
\eta^\lambda=(1-\lambda)p_0+\lambda \eta_0\in \Omega
\]
for all $0\leq \lambda<1$. We may also choose $\lambda$ close to $1$, depending only on $r$, such that $\eta^\lambda\in B_r(\eta_0)\cap\Omega$. Letting 
\[
\eta^\lambda_{\vep, n}=(1-\lambda)q_\vep+\lambda \eta_{\vep, n}=q_\vep\cdot (\lambda w_n),
\]
we have $\eta^\lambda_{\vep, n}\in B_r(\eta_0)\cap \Omega$ when $n$ is sufficiently large and $\vep$ is sufficiently small. Let us now take $\xi^\lambda_{\vep, n}=p_\vep\cdot (\lambda w_n)$. It is easily seen that 
\[
|\xi^\lambda_{\vep, n}\cdot  (\eta^\lambda_{\vep, n})^{-1}|_G\to 0 \quad \text{as $n\to \infty$ and $\vep\to 0$},
\]
and thus $\xi^\lambda_{\vep, n}\in B_r(\eta_0)$ for $n$ large and $\vep$ small. By \eqref{general comp1}, we have $\xi^\lambda_{\vep, n}\in S_{p_\vep} (u)$. It follows from the definition of subsolutions that 
\beq\label{general comp3}
u(p_\vep)+\la \nabla_H\varphi_1(p_\vep),\ \lambda(w_n)_h\ra\leq f(p_\vep).
\eeq
Combining this with \eqref{general comp2} and using \eqref{cp eq0} and \eqref{cp new5}, we obtain
\[
\sigma \leq u(p_\vep)-v(q_\vep)\leq {1\over n}+(1-\lambda)\la \nabla_H\varphi_1(p_\vep),\ (w_n)_h\ra.
\]
Applying \eqref{general comp3} and \eqref{cp eq0} again, we are led to 
\[
\sigma\leq  {1\over n}+{1-\lambda \over \lambda}\left(f(p_\vep)-u(p_\vep)\right)\leq {1\over n}+{1-\lambda \over \lambda}\left(f(p_\vep)-v(p_\vep)\right)
\] 
Sending $n\to \infty$ and $\vep\to 0$, we get
\[
\sigma\leq {1-\lambda \over \lambda}\left(f(p_0)-v(p_0)\right).
\]
Passing to the limit as $\lambda\to 1$ immediately yields a contradiction. 
\end{proof}

\subsection{Existence of solutions}\label{sec:exist}

We also adapt Perron's method to show the existence of solutions of \eqref{hj eq} with $f$ satisfying \eqref{dirichlet-f} and \eqref{coercive-f}. In the sequel, for a locally bounded function $u:\Omega \to \mathbb{R}$, we denote by $u^*$ and $u_*$ its upper and lower semicontinuous envelopes respectively.

\begin{thm}[Existence of solutions]\label{thm existence}
Let $\Omega\subset \H$ be a bounded domain. Assume that $f\in C(\Oba)$ satisfies 
\eqref{dirichlet-f} and \eqref{coercive-f} for some $K\in \R$. 
Assume that there exists a subsolution $\underline{u}\in C(\Oba)$ of \eqref{hj eq} satisfying $\underline{u}=K$ on $\partial \Omega$. For any $p\in \Oba$, let 
\beq\label{perron eq}
U(p)=\sup\{u(p): \text{$u\in USC(\Oba)$ is a subsolution of \eqref{hj eq}}\}. 
\eeq
Then $U^*$ is continuous in $\Oba$ and is the unique solution of \eqref{hj eq} satisfying $U^*\leq f\leq K$ in $\Oba$ and $U^*=f=K$ on $\partial \Omega$. 
\end{thm}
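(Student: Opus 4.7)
The plan is to adapt Perron's method to our nonlocal first-order equation. Set
\[
\mathcal{F} := \{u \in USC(\overline\Omega) : u \text{ is a subsolution of \eqref{hj eq}}\}.
\]
Three preliminary observations orient the argument. First, Lemma \ref{lem sub} shows that every $u \in \mathcal{F}$ satisfies $u \leq f \leq K$ in $\overline\Omega$. Second, since $\underline{u} \in \mathcal{F}$ and $\underline{u} = f = K$ on $\partial\Omega$, each $u \in \mathcal{F}$ equals $K$ on $\partial\Omega$, and therefore $U$, $U^*$, and $U_*$ all agree with $K$ on $\partial\Omega$, while $U^* \leq K$ on all of $\overline\Omega$. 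Third, the constant function $K$ is trivially a supersolution of \eqref{hj eq}.

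Step 1 is to verify that $U^*$ is itself a subsolution. Suppose $\varphi \in C^1(\Omega)$ and $U^* - \varphi$ attains a strict maximum at $p_0 \in \Omega$. Standard approximation arguments produce $u_n \in \mathcal{F}$ and $p_n \to p_0$ with $u_n(p_n) \to U^*(p_0)$ and $u_n - \varphi$ attaining a local maximum at $p_n$. For each $\xi_0 \in S_{p_0}(U^*)$, define $\xi_n := p_n \cdot (p_0^{-1} \cdot \xi_0) \in \H_{p_n}$; then $\xi_n \to \xi_0$, and upper semicontinuity of $U^*$ together with $u_n \leq U^*$ and $U^*(\xi_0) < U^*(p_0)$ yield $u_n(\xi_n) < u_n(p_n)$ for large $n$. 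Hence $\xi_n \in S_{p_n}(u_n)$, and passing to $\liminf$ in the subsolution inequality for $u_n$ at $p_n$ recovers the subsolution inequality for $U^*$ at $p_0$.

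Step 2, which I expect to be the main obstacle, is the bump argument establishing that $U_*$ is a supersolution. Supposing otherwise, one finds $p_0 \in \Omega$ and $\varphi \in C^1(\Omega)$ such that $U_* - \varphi$ attains a strict minimum at $p_0$ and
\[
U_*(p_0) + \sup\{\langle \nabla_H \varphi(p_0), (p_0^{-1} \cdot \xi)_h \rangle : \xi \in \hat{S}_{p_0}(U_*)\} < f(p_0).
\]
The idea is to define $\psi_\delta(p) := \varphi(p) + \delta - c|p_0^{-1} \cdot p|_G^4$ with small $\delta > 0$ and sufficiently large $c > 0$, and verify that $\psi_\delta$ is a subsolution on some ball $B_r(p_0)$. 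The delicate point is that $S_q(\psi_\delta)$ is determined by the values of $\psi_\delta$ across $\H_q \cap \Omega$, not just near $q$; so to exploit the strict inequality at $p_0$ one must use continuity of $\varphi$ and $f$ to transfer control from $\hat{S}_{p_0}(U_*)$ over to $S_q(\psi_\delta)$ for $q$ near $p_0$. The quartic localizer forces $\psi_\delta < U$ on $\partial B_r(p_0)$, after which one splices to form a new element of $\mathcal{F}$ whose value at some point near $p_0$ strictly exceeds $U$, contradicting the definition of $U$.

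Combining Steps 1 and 2, the comparison principle in Theorem \ref{thm comp} (applicable since $U^* \leq K = U_*$ on $\partial\Omega$ and $U^* \leq K$ in $\overline\Omega$) yields $U^* \leq U_*$. Combined with the trivial inequality $U_* \leq U^*$, this forces $U^* = U_* = U \in C(\overline\Omega)$, so $U$ is a solution of \eqref{hj eq} with $U \leq f \leq K$ in $\overline\Omega$ and $U = K$ on $\partial\Omega$. Uniqueness among solutions satisfying these bounds follows from another application of Theorem \ref{thm comp}.
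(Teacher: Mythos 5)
Your architecture is exactly the paper's: Perron's method, with your Step 1 corresponding to Proposition \ref{prop sub}, your Step 2 to Proposition \ref{prop super}, and the conclusion via the comparison principle of Theorem \ref{thm comp}. Step 1 and the final comparison step are essentially correct as written (one small slip: a subsolution $u\in\mathcal F$ need not equal $K$ on $\partial\Omega$, only $u\le f=K$ there; what you actually need and do use is $U\ge \underline u=K$ on $\partial\Omega$).

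The gap is Step 2, which you flag as the main obstacle but leave as a plan, and the plan as stated does not close. First, the ``transfer of control'' from $\hat S_{p_0}(U_*)$ to $S_q(\cdot)$ for $q$ near $p_0$ cannot be achieved for a fixed bump by continuity of $\varphi$ and $f$ alone: the sublevel sets $S_q$ can jump discontinuously in $q$. The paper instead argues by contradiction along sequences $r_n,\delta_n\to 0$ and $q_n\to p_0$, extracts points $\eta_n\in S_{q_n}(U_{r_n})$ converging to some $\eta_0\in\H_{p_0}$, and shows $\eta_0\in\hat S_{p_0}(U_*)$; it is precisely the closed sublevel set $\hat S_{p_0}$ appearing in the supersolution definition that makes this limit passage work, since the strict inequalities $u(\eta_n)<\varphi(q_n)+\delta_n$ only survive as a non-strict inequality in the limit. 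Second, and more seriously, nothing in your sketch rules out $\eta_0\in\partial\Omega$, in which case $\eta_0\notin\hat S_{p_0}(U_*)$ and no contradiction with the assumed failure of the supersolution inequality follows. This is exactly where the hypotheses \eqref{dirichlet-f}--\eqref{coercive-f} enter Step 2: since $U_*\ge\underline u=K=f$ on $\partial\Omega$ while $U_*(\eta_0)\le\varphi(p_0)=U_*(p_0)<f(p_0)\le K$, a boundary limit point is impossible. Without these two ingredients the bump argument is not a routine verification, so Step 2 remains a genuine gap.
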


\begin{rmk}
In the theorem above, if the domain $\Omega$ is further assumed to be h-convex, then the existence of $\underline{u}$ is guaranteed; see Proposition \ref{prop lower} below for an explicit construction of $\underline{u}$ satisfying the required conditions. 
\end{rmk}

\begin{rmk}
In view of Lemma \ref{lem sub}, we see that any subsolution $u$ satisfies $u\leq f$ in $\Omega$. Therefore $\underline{u}\leq u\leq f$ in $\Oba$ holds and $U$ in \eqref{perron eq} is well-defined. 
\end{rmk}

Let us first show that $U$ is a subsolution. To this purpose, we prove the following result, where we do not need the coercivity-like conditions \eqref{dirichlet} and \eqref{coercive-like}.  

\begin{prop}[Maximum subsolution]\label{prop sub}
Suppose that $\Omega$ is a domain in $\H$ and $f\in USC(\Omega)$. Let $\A$ be a family of subsolutions of \eqref{hj eq}. 
 Let $v$ be given by 
\[
v(p)=\sup\{u(p): \text{$u\in \A$}\}, \quad p\in \Omega. 
\]
Then $v^\ast$ is a subsolution of \eqref{hj eq}. 
\end{prop}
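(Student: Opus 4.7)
This is a Perron-type stability statement, and I will adapt the classical envelope-of-subsolutions argument while being careful with the nonlocal supremum term. Fix $\varphi \in C^1(\Omega)$ and $p_0 \in \Omega$ such that $v^* - \varphi$ attains a strict local maximum at $p_0$ (we may assume strict after subtracting $|p_0^{-1}\cdot p|_G^4$ from $\varphi$). By definition of $v^*$, there exist $\tilde{p}_n \to p_0$ and $u_n \in \A$ with $u_n(\tilde{p}_n) \to v^*(p_0)$. A standard strict-maximum argument then produces $q_n \to p_0$ at which $u_n - \varphi$ attains a local maximum, with $u_n(q_n) \to v^*(p_0)$. The subsolution property of $u_n$ at $q_n$ gives
\[
u_n(q_n) + \langle \nabla_H \varphi(q_n), (q_n^{-1}\cdot \eta)_h \rangle \leq f(q_n) \quad \text{for every } \eta \in S_{q_n}(u_n).
\]

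The main step is to pass to the limit in the nonlocal supremum; the obstacle is that the constraint set $S_{q_n}(u_n) \subset \H_{q_n}\cap\Omega$ depends on both the base point and the function, neither of which matches the limiting data. I will exploit the left-invariance of the horizontal distribution to transport test points. Given any $\xi \in S_{p_0}(v^*)$, set
\[
\xi_n := q_n \cdot (p_0^{-1}\cdot \xi).
\]
Since $p_0^{-1}\cdot \xi \in \H_0$, we have $\xi_n \in \H_{q_n}$; and $\xi_n \to \xi \in \Omega$ ensures $\xi_n \in \Omega$ for $n$ large. The key identity is
\[
(q_n^{-1}\cdot \xi_n)_h = (p_0^{-1}\cdot \xi)_h,
\]
which eliminates any discrepancy in the inner-product term. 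To check $\xi_n \in S_{q_n}(u_n)$, use $u_n \le v \le v^*$ together with the upper semicontinuity of $v^*$ to bound
\[
\limsup_{n\to\infty} u_n(\xi_n) \leq \limsup_{n\to\infty} v^*(\xi_n) \leq v^*(\xi) < v^*(p_0) = \lim_{n\to\infty} u_n(q_n),
\]
so $u_n(\xi_n) < u_n(q_n)$ for all large $n$.

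Feeding $\eta = \xi_n$ into the subsolution inequality and using the identity above,
\[
u_n(q_n) + \langle \nabla_H \varphi(q_n), (p_0^{-1}\cdot \xi)_h \rangle \leq f(q_n).
\]
Letting $n \to \infty$ and using continuity of $\nabla_H \varphi$ and of $f$, together with $u_n(q_n) \to v^*(p_0)$, yields
\[
v^*(p_0) + \langle \nabla_H \varphi(p_0), (p_0^{-1}\cdot \xi)_h \rangle \leq f(p_0).
\]
Since $\xi \in S_{p_0}(v^*)$ was arbitrary, taking the supremum produces the required subsolution inequality for $v^*$. Finally, the degenerate cases are handled separately: if $\nabla_H \varphi(p_0) = 0$ then by convention the sup is $0$ and we only need $v^*(p_0) \leq f(p_0)$, which follows from Lemma \ref{lem sub} applied to $v^*$ (or directly by passing to the limit in $u_n(q_n) \leq f(q_n)$, obtained by choosing any $\eta_n \to q_n$ in $S_{q_n}(u_n)$, or from $u_n \leq f$); if instead $S_{p_0}(v^*) = \emptyset$, the sup on the left is $0$ and again only the pointwise inequality is needed. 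The main subtlety, as flagged above, is the coordination between the varying base points $q_n$ and the limiting constraint set $S_{p_0}(v^*)$, resolved by the left-translation transport $\xi \mapsto \xi_n$.
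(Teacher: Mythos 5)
Your proof is correct and follows essentially the same route as the paper's: produce maximizers $q_n\to p_0$ of $u_n-\varphi$ with $u_n(q_n)\to v^*(p_0)$, transport a test point $\xi\in S_{p_0}(v^*)$ to $\H_{q_n}$ (the paper takes $\xi_j\in\H_{p_j}\cap\Omega$ with $\xi_j\to q_\delta$ for a $\delta$-approximate maximizer, while you fix an arbitrary $\xi$, use the explicit left translation $\xi_n=q_n\cdot(p_0^{-1}\cdot\xi)$, and take the supremum at the end -- an equivalent and slightly cleaner bookkeeping), verify $\xi_n\in S_{q_n}(u_n)$ via upper semicontinuity of $v^*$, and pass to the limit. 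One small caveat: invoking Lemma \ref{lem sub} \emph{for $v^*$} in the degenerate case would be circular (that lemma presupposes the subsolution property you are proving), but the alternative you give -- applying it to each $u_n$ to get $u_n(q_n)\le f(q_n)$ and letting $n\to\infty$ -- is exactly what the paper does and is correct.
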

\begin{proof}
Suppose that there exist $\varphi\in C^1(\Omega)$ and $p_0\in \Omega$ such that $v^\ast-\varphi$ attains a strict maximum at $p_0$. Then we can find a sequence $u_j\in \A$ and $p_j\in \Omega$ such that $u_j-\varphi$ attains a maximum at $p_j$, and, as $j\to \infty$,  
\beq\label{exist sub1}
p_j\to p_0, \quad u_j(p_j)\to v^\ast(p_0).
\eeq
 
Let us first consider the case when $\nabla_H\varphi(p_0)\neq 0$. It follows immediately that $S_{p_0}(v^\ast)\neq \emptyset$. For any $\delta>0$ small, there exists $q_\delta\in S_{p_0}(v^\ast)$ such that 
\beq\label{exist sub2}
\sup\left\{\la \nabla_H \varphi(p_0),\  (p_0^{-1}\cdot \xi)_h\ra:\ \xi\in S_{p_0}(v_\ast) \right\}\leq \la \nabla_H \varphi(p_0),\  (p_0^{-1}\cdot q_\delta)_h\ra+\delta. 
\eeq
Let us take $\xi_j\in \H_{p_j}\cap \Omega$ such that $\xi_j\to q_\delta$ as $j\to \infty$.
Since $v^\ast(q_\delta)<v^\ast(p_0)$ and 
\[
\limsup_{j\to \infty}u_j(\xi_j)\leq \limsup_{j\to \infty} v^\ast(\xi_j)\leq v^\ast(q_\delta),
\]
we get $u_j(\xi_j)<v^\ast(p_0)$ and thus $\xi_j\in S_{p_j}(u_j)$ for $j\geq 1$ sufficiently large.

 Applying Definition \ref{def sub}, we have 
\[
u_j(p_j)+\sup\left\{\la \nabla_H \varphi(p_j),\  (p_j^{-1}\cdot \xi)_h\ra:\ \xi\in S_{p_j}(u_j) \right\}\leq f(p_j),
\]
which implies 
\[
u_j(p_j)+\la \nabla_H \varphi(p_j),\  (p_j^{-1}\cdot \xi_j)_h\ra\leq f(p_j).
\]
Letting $j\to \infty$, by \eqref{exist sub1} and the upper semicontinuity of $f$, we are led to 
\[
v^\ast(p_0)+\la \nabla_H \varphi(p_0),\  (p_0^{-1}\cdot q_\delta)_h\ra\leq f(p_0).
\]
It follows from \eqref{exist sub2} that 
\[
v^\ast(p_0)+\sup\left\{\la \nabla_H \varphi(p_0),\  (p_0^{-1}\cdot \xi)_h\ra:\ \xi\in S_{p_0}(v^\ast) \right\}\leq f(p_0)+\delta. 
\]
Due to the arbitrariness of $\delta>0$, we obtain 
\[
v^\ast(p_0)+\sup\left\{\la \nabla_H \varphi(p_0),\  (p_0^{-1}\cdot \xi)_h\ra:\ \xi\in S_{p_0}(v^\ast) \right\}\leq f(p_0).
\]

Let us discuss the case when $\nabla_H \varphi(x_0)=0$. 
Since $u_j-\varphi$ attains a maximum at $p_j$,  by Lemma \ref{lem sub}, we get $u_j(p_j)\leq f(p_j)$. 
Sending $j\to \infty$, we have $v^\ast(p_0)\leq f(p_0)$, as desired. 
Our proof is now complete. 
\end{proof}
\begin{rmk}
Proposition \ref{prop sub} can be used in more general circumstances than the setting in Theorem \ref{thm existence}. Note that $f$ is only assumed to be upper semicontinuous in $\Omega$ in contrast to the continuity assumption in Theorem \ref{thm existence}.
\end{rmk}
\begin{rmk}\label{rmk supremum}
The proof of Proposition \ref{prop sub}, which is just an adaptation of Perron's argument, is certainly not restricted to \eqref{hj eq}. It can be extended with ease to a more general class of nonlocal Hamilton-Jacobi equation including 
\[
\sup\left\{\la \nabla_H u(p),  (p^{-1}\cdot \xi)_h\ra:  \xi\in \hat{S}_p(u)\right\}=0.
\]
In this case, in view of Theorem \ref{thm char}, our result simply means that the upper semicontinuous envelope of the pointwise supremum of a class of upper semicontinuous h-quasiconvex functions is also h-quasiconvex. 
\end{rmk}

The following result shows that the maximal subsolution is a solution.

\begin{prop}[Locally greater subsolutions]\label{prop super}
Let $\Omega\subset \H$ be a bounded domain and $f\in LSC(\Omega)$. Let $u$ be a subsolution of \eqref{hj eq}. Assume that $u_\ast\geq f$ on $\pO$. If $u_\ast$ fails to satisfy the supersolution property at $p_0\in \Omega$, then there exist $r>0$ and a subsolution $U_r$ of \eqref{hj eq} such that
\beq\label{greater sub1}
\sup_{B_r(p_0)} (U_r-u)>0
\eeq
and  
\beq\label{greater sub2}
U_r=u \quad \text{in $\Omega\setminus B_r(p_0)$.} 
\eeq
\end{prop}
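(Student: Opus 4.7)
The plan is to execute a Perron-type bump construction adapted to the nonlocal equation \eqref{hj eq}. The failure of the supersolution property at $p_0$ supplies a test function $\varphi \in C^1(\Omega)$ with $u_\ast - \varphi$ attaining a strict local minimum $0$ at $p_0$ (so $\varphi(p_0) = u_\ast(p_0)$) and a number $\epsilon > 0$ for which
\[
\varphi(p_0) + \sup\{\langle \nabla_H \varphi(p_0), (p_0^{-1}\cdot \xi)_h\rangle : \xi \in \hat{S}_{p_0}(u_\ast)\} \leq f(p_0) - 3\epsilon.
\]
Using lower semicontinuity of $u_\ast - \varphi$ and the strict minimum, I fix $r_2 > r_1 > 0$ with $\overline{B_{r_2}(p_0)} \subset \Omega$ and a margin $m > 0$ such that $u_\ast - \varphi \geq m$ on the annulus $\{r_1 \leq |p_0^{-1}\cdot p|_G \leq r_2\}$. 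For a parameter $\eta \in (0, m)$, set $w(p) = \varphi(p) + \eta$ on $B_{r_2}(p_0)$ and define
\[
U_r(p) = \begin{cases} \max\{u(p), w(p)\}, & p \in B_{r_2}(p_0),\\ u(p), & p \in \Omega\setminus B_{r_2}(p_0).\end{cases}
\]
On the annulus $w < u_\ast \leq u$, so $U_r = u$ off $B_{r_1}(p_0)$, yielding \eqref{greater sub2} with $r = r_1$. Since $u_\ast(p_0) = \varphi(p_0)$, there is a sequence $q_k \to p_0$ with $u(q_k) \to \varphi(p_0)$, whence $w(q_k) > u(q_k)$ for $k$ large, giving \eqref{greater sub1}. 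Upper semicontinuity of $U_r$ is immediate.

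It remains to verify that $U_r$ is a subsolution of \eqref{hj eq}. Fix $\phi \in C^1(\Omega)$ and $p \in \Omega$ with $U_r - \phi$ attaining a local maximum at $p$. If $U_r(p) = u(p)$, then $u - \phi$ also has a local max at $p$, and any $\xi \in S_p(U_r)$ obeys $u(\xi) \leq U_r(\xi) < u(p)$, so $S_p(U_r) \subseteq S_p(u)$ and the subsolution inequality transfers directly from $u$ to $U_r$. If instead $U_r(p) = w(p) > u(p)$, then $p \in B_{r_1}(p_0)$, the smooth function $w - \phi$ has a local max at $p$, forcing $\nabla_H \phi(p) = \nabla_H \varphi(p)$, and the required inequality reduces to
\[
\varphi(p) + \eta + \sup_{\xi \in S_p(U_r)} \langle \nabla_H \varphi(p), (p^{-1}\cdot \xi)_h\rangle \leq f(p).
\]

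The main obstacle is establishing this last inequality uniformly for $p \in B_{r_1}(p_0)$, because the nonlocal set $S_p(U_r)$ depends both on the varying horizontal plane $\H_p$ and on $U_r$ itself. The plan is to use continuity of $\varphi$, $\nabla_H \varphi$, and $f$ to pay an $O(r_1)$ error in replacing $(\varphi(p), \nabla_H \varphi(p), f(p))$ by $(\varphi(p_0), \nabla_H \varphi(p_0), f(p_0))$, and to transfer $\H_p$ onto $\H_{p_0}$ via the right translation $\xi' := p_0\cdot (p^{-1}\cdot \xi)$, which satisfies $(p_0^{-1}\cdot \xi')_h = (p^{-1}\cdot \xi)_h$ and $\tilde{d}_H(\xi, \xi') = |p \cdot p_0^{-1}|_G = O(r_1)$. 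The crux is then to relate $\xi'$ to elements of $\hat{S}_{p_0}(u_\ast)$: every $\xi \in S_p(U_r)$ satisfies $u_\ast(\xi) \leq U_r(\xi) < w(p) \leq u_\ast(p_0) + \eta + O(r_1)$, and choosing $\eta$ and $r_1$ small relative to $\epsilon$ lets the $3\epsilon$ slack at $p_0$ absorb all cumulative errors, converting the strict failure inequality at $p_0$ into the desired inequality for $U_r$ at $p$, which completes the verification.
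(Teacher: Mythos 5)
Your construction of $U_r$ (the bump $\max\{u,\varphi+\eta\}$ on a small ball), the verification of \eqref{greater sub1}--\eqref{greater sub2}, and Case 1 (where $U_r(p)=u(p)$ and $S_p(U_r)\subset S_p(u)$) all match the paper's proof. The genuine gap is in Case 2, at exactly the point you label the crux. You propose to prove, for \emph{fixed} small $r_1,\eta>0$ and all $p\in B_{r_1}(p_0)$ with $U_r(p)=\varphi(p)+\eta$, that the supremum over $S_p(U_r)$ is controlled by the supremum over $\hat{S}_{p_0}(u_\ast)$ up to errors absorbed by the $3\epsilon$ slack. This cannot work as an additive error estimate, because the discrepancy between the two suprema is not an error in the \emph{value} of the linear functional but in the \emph{constraint set}. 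A competitor $\xi\in S_p(U_r)$ only satisfies $u_\ast(\xi)<u_\ast(p_0)+\eta+O(r_1)$, whereas membership in $\hat{S}_{p_0}(u_\ast)$ requires the hard threshold $u_\ast(\cdot)\le u_\ast(p_0)$; for any fixed $\eta>0$ there may be admissible $\xi$ with $u_\ast(\xi)$ slightly above $u_\ast(p_0)$ whose translates $\xi'$ lie outside $\hat{S}_{p_0}(u_\ast)$ and on which $\la \nabla_H\varphi(p_0),(p_0^{-1}\cdot\xi')_h\ra$ exceeds the supremum over $\hat{S}_{p_0}(u_\ast)$ by an amount that does not shrink with $\eta$ and $r_1$. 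Two further problems compound this: $u_\ast$ is only lower semicontinuous, so $u_\ast(\xi')$ is not controlled by $u_\ast(\xi)$ even though $\tilde{d}_H(\xi,\xi')=O(r_1)$; and your argument never uses the hypothesis $u_\ast\geq f$ on $\pO$, which is needed to prevent competitors from escaping to the boundary, where they cannot belong to $\hat{S}_{p_0}(u_\ast)\subset\Omega$.

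The paper closes this step by contradiction and compactness rather than by a uniform estimate: it supposes the desired inequality fails along sequences $r_n,\delta_n\to0$ with $q_n\to p_0$, picks near-maximizers $\eta_n\in S_{q_n}(U_{r_n})$, extracts a convergent subsequence $\eta_n\to\eta_0\in\H_{p_0}$, and uses $u_\ast(\eta_0)\le\liminf_n u(\eta_n)\le\varphi(p_0)=u_\ast(p_0)$ to conclude $\eta_0\in\hat{S}_{p_0}(u_\ast)$ --- this is precisely where the non-strict sublevel set in the definition of supersolutions and the lower semicontinuity of $u_\ast$ are what make the argument work, and where $u_\ast\geq f$ on $\pO$ keeps $\eta_0$ inside $\Omega$. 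Passing to the limit in the violated inequality then contradicts the failure of the supersolution property at $p_0$. You should replace your fixed-parameter absorption argument with this limiting argument; the rest of your proof stands.
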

\begin{proof}
Since $u_\ast$ fails to satisfy the supersolution property at $p_0$, there exists $\varphi\in C^1(\Omega)$ such that $u_\ast-\varphi$ attains a  minimum at $p_0$ but 
\beq\label{eq greater1}
u_\ast(p_0)+\sup\left\{\la \nabla_H \varphi(p_0),  (p_0^{-1}\cdot \xi)_h\ra:\ \xi\in \hat{S}_{p_0}(u_\ast) \right\}< f(p_0).
\eeq
Since the supremum is nonnegative, it is then clear that 
\beq\label{eq greater3}
u_\ast(p_0)<f(p_0).
\eeq

By adding $a |p\cdot p_0^{-1}|_G^4+b$ to $\varphi$ with $a<0$ and $b\in \R$, we may additionally assume that there exists $r>0$ small satisfying
\[
(u_\ast-\varphi)(p)>(u_\ast-\varphi)(p_0)=0
\]
for all $p\in B_r(p_0)\setminus \{p_0\}$. 
We also take $\delta>0$ accordingly small such that 
\[
\varphi(p)+\delta\leq u(p) \quad \text{for $p\in B_r(p_0)\setminus B_{r/2}(p_0)$.}
\]
Letting 
\[
U_r(p)=\begin{cases}
\max\{\varphi(p)+\delta, u(p)\} & \text{if $p\in B_r(p_0)$,}\\
u(p) & \text{if $p\notin B_r(p_0)$,}
\end{cases}
\]
we see that \eqref{greater sub1} and \eqref{greater sub2} hold. 
In what follows we show that $U_r$ is a subsolution of \eqref{hj eq} for such $r>0$ and $\delta>0$ small. 

Suppose that $\psi\in C^1(\Omega)$ is a test function such that $U_r-\psi$ attains a maximum at some $q_0\in \Omega$. We divide our argument into two cases. 

Case 1. Suppose that $U_r(q_0)=u(q_0)$. It follows that $u-\psi$ attains a maximum at $q_0$. Since $u$ is a subsolution, we have 
\beq\label{eq greater2}
u(q_0)+\sup\{\la \nabla_H \psi(q_0), (q_0^{-1}\cdot \eta)_h\ra: \eta\in S_{q_0}(u) \}\leq f(q_0). 
\eeq
Noticing that $U_r\geq u$ in $\Omega$, we have $S_{q_0}(U_r)\subset S_{q_0}(u)$, which by \eqref{eq greater2} yields
\[
U_r(q_0)+\sup\{\la \nabla_H \psi(q_0), (q_0^{-1}\cdot \eta)_h\ra: \eta\in S_{q_0}(U_r) \}\leq f(q_0),
\]
as desired. 

Case 2. Suppose that $U_r(q_0)=\varphi(q_0)+\delta$. In this case, we have $q_0\in B_r(p_0)$. Also, it is easily seen that $\varphi+\delta-\psi$ attains a maximum at $q_0$, which yields
\[
\nabla_H \psi(q_0)=\nabla_H \varphi(q_0). 
\]
It thus suffices to show that there exists $r, \delta>0$ small such that 
\beq
U_r(q)+\sup\{\la\nabla_H\varphi(q), (q^{-1}\cdot \eta)_h\ra: \eta\in S_q(U_r)\}\leq f(q)
\eeq
for all $q\in B_r(p_0)$ satisfying $U_r(q)=\varphi(q)+\delta$. 

Arguing by contradiction, we take $r_n, \delta_n>0$ and $q_n\in B_{r_n}(p_0)$ such that $r_n, \delta_n\to 0$ as $n\to \infty$, 
\[
U_{r_n}(q_n)=\varphi(q_n)+\delta_n,
\]
 and
\beq\label{eq greater4}
\varphi(q_n)+2\delta_n+\sup\{\la\nabla_H\varphi(q_n), (q_n^{-1}\cdot \eta)_h\ra: \eta\in S_{q_n}(U_r)\}> f(q_n).
\eeq
Note that if there exists a subsequence of $q_n$ at which $\nabla_H\varphi(q_n)=0$, then sending $n\to \infty$ in \eqref{eq greater4} yields $\varphi(p_0)\geq f(p_0)$, which is clearly a contradiction to \eqref{eq greater3}.

We thus only need to consider the case when $\nabla_H\varphi(q_n)\neq 0$  for all $n\geq 1$. By \eqref{eq greater4}, this implies the existence of $\eta_n\in S_{q_n}(U_{r_n})$ such that 
\beq\label{eq greater6}
\varphi(q_n)+2\delta_n+\la\nabla_H\varphi(q_n), (q_n^{-1}\cdot \eta_n)_h\ra> f(q_n).
\eeq
In particular, we have $\eta_n\in \H_{q_n}\cap \Omega$ and 
\beq\label{eq greater5}
u(\eta_n)\leq U_{r_n}(\eta_n)< \varphi(q_n)+\delta_n.
\eeq
Due to the boundedness of $\Omega$, we may find a subsequence, still indexed by $n$, such that $\eta_n\to \eta_0$ for some $\eta_0\in \Oba$. Since the horizontal plane $\H_p$ is continuous in $p$, we have $\eta_0\in \H_{p_0}$. Moreover, passing to the limit in \eqref{eq greater5}, we obtain
\[
u_\ast(\eta_0)\leq \varphi(p_0)=u_\ast(p_0),
\]
which by \eqref{eq greater3} implies $u_\ast(\eta_0)<f(p_0)$. 
In view of the assumption that $u_\ast\geq f$ on $\partial \Omega$, we immediately have $\eta_0\in \Omega$. In other words, we have shown that 
\beq\label{eq greater7}
\eta_0\in \hat{S}_{p_0}(u_\ast).
\eeq

Finally, taking $\liminf_{n\to \infty}$ in \eqref{eq greater6}, we are led to
\[
u_\ast(p_0)+\la \nabla_H\varphi(p_0), (p_0^{-1}\cdot \eta_0)_h\ra\geq f(p_0),
\]
which, together with \eqref{eq greater7}, yields a contradiction to \eqref{eq greater1}.
\end{proof}

Let us complete the proof of Theorem \ref{thm existence}.
\begin{proof}[Proof of Theorem \ref{thm existence}]
Let $U$ be given by \eqref{perron eq}. By Proposition \ref{prop sub}, $U^\ast$ is a subsolution of \eqref{hj eq}. Also, by Lemma \ref{lem sub} and the continuity of $f$ in $\Oba$, we obtain 
\beq\label{existence pf1}
U^\ast\leq f\leq K \quad \text{ in $\Oba$.}
\eeq
By assumptions on $\underline{u}$, we also have $U\geq \underline{u}$ in $\Oba$ and 
\beq\label{existence pf2}
(U^\ast)_\ast \geq U_\ast\geq \underline{u}=K\quad\text{ on $\partial \Omega$.}
\eeq
Applying Proposition \ref{prop super}, we see that $(U^\ast)_\ast$ is a supersolution of \eqref{hj eq}, for otherwise we can construct a  subsolution locally larger than $U^\ast$, which contradicts the definition of $U$. Noticing that 
\eqref{existence pf1} and \eqref{existence pf2} are combined to imply that 
\[
U^\ast=(U^\ast)_\ast=K\quad \text{on $\pO$,}
\]
by the comparison principle, Theorem \ref{thm comp}, we get $U^\ast\leq (U^\ast)_\ast$ in $\Oba$. It follows that $U^\ast$ is continuous in $\Oba$ and is the unique solution of \eqref{hj eq} satisfying $U^\ast=f=K$ on $\partial\Omega$.
\end{proof}

Following the proof above, one can obtain an existence result for general Dirichlet boundary problems under the h-convexity of $\Omega$.
\begin{thm}[Existence for general Dirichlet problems]\label{thm existence2}
Let $\Omega\subset \H$ be a bounded h-convex domain and $f\in C(\Oba)$. 
Assume that there exist a subsolution $\underline{u}\in C(\Oba)$ of \eqref{hj eq} such that $\underline{u}=f$ on $\partial \Omega$. Let $U$ be given by \eqref{perron eq}. 
Then $U^*$ is continuous in $\Oba$ and is the unique solution of \eqref{hj eq} satisfying $U^*=f$ on $\partial \Omega$. 
\end{thm}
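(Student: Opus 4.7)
The plan is to carry out Perron's method along exactly the same lines as the proof of Theorem \ref{thm existence}, but with the comparison principle Theorem \ref{thm comp2} replacing Theorem \ref{thm comp}. The up-to-boundary h-convexity of $\Omega$ plays the structural role previously held by the coercivity-like bound $u\le K$, namely it is what allows us to close the comparison step at the boundary without recourse to a uniform upper bound.

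First I would check that the Perron family is nonempty and uniformly bounded: $\underline{u}$ belongs to it by hypothesis, and by Lemma \ref{lem sub} every subsolution $u\in USC(\Oba)$ satisfies $u\le f$ in $\Oba$, so $\underline{u}\le U\le f$ in $\Oba$. Proposition \ref{prop sub} then gives that $U^*$ is a subsolution of \eqref{hj eq}. For the boundary trace, the sandwich $\underline{u}\le U\le f$ together with the continuity of $\underline{u}$ and $f$ forces $U_*\ge \underline{u}=f$ and $U^*\le f$ on $\partial\Omega$, so $(U^*)_*\ge U_*\ge f$ while $(U^*)_*\le U^*\le f$ on $\partial\Omega$, yielding $U^*=(U^*)_*=f$ on $\partial\Omega$.

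The central step is to promote $(U^*)_*$ to a supersolution. Were it to fail the supersolution property at some interior point $p_0$, Proposition \ref{prop super}, whose standing hypothesis $(U^*)_*\ge f$ on $\partial\Omega$ we just verified, would produce $r>0$ and a subsolution $U_r\in USC(\Oba)$ that coincides with $U^*$ off $B_r(p_0)$ and strictly exceeds $U^*$ somewhere inside. Such a $U_r$ would lie in the Perron family defining $U$, contradicting the definition of $U^*$. Hence $(U^*)_*$ is a supersolution of \eqref{hj eq}.

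Since $U^*=(U^*)_*=f$ on $\partial\Omega$, Theorem \ref{thm comp2} applied to the subsolution $U^*$ and the supersolution $(U^*)_*$ in the h-convex-up-to-boundary domain $\Omega$ delivers $U^*\le (U^*)_*$ in $\Oba$, which combined with the trivial reverse inequality forces $U^*=(U^*)_*$. Thus $U^*$ is continuous in $\Oba$ and is a solution of \eqref{hj eq} with $U^*=f$ on $\partial\Omega$. Uniqueness is one more application of Theorem \ref{thm comp2}. I expect the only delicate point to be verifying that the strengthened boundary hypothesis $\underline{u}=f$ (not merely $\underline{u}\le f$) on $\partial\Omega$ is precisely what secures $(U^*)_*\ge f$ on $\partial\Omega$, the input required by Proposition \ref{prop super} in the absence of a global coercivity bound.
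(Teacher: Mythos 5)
Your proposal is correct and follows exactly the route the paper intends: the paper omits the detailed proof of Theorem \ref{thm existence2}, remarking only that one repeats the Perron argument of Theorem \ref{thm existence} with Theorem \ref{thm comp2} in place of Theorem \ref{thm comp}, which is precisely what you do. Your identification of the hypothesis $\underline{u}=f$ on $\partial\Omega$ as the substitute for the coercivity-like bound in securing $(U^*)_*\ge f$ on $\partial\Omega$ (the input to Proposition \ref{prop super}) is the right observation.
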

We omit the detailed proof but remark that Theorem \ref{thm comp2} enables us to handle general boundary data following similar arguments above.

\section{H-quasiconvex envelope via PDE-based iteration}\label{sec:iteration}

An iteration is introduced in Section \ref{sec:intro} to find the h-quasiconvex envelope $Q(f)$ of a given function $f$ in a bounded h-convex domain $\Omega\subset\H$ with $f$ satisfying \eqref{dirichlet-f} and \eqref{coercive-f}. In this section, we first prove our main result, Theorem \ref{thm scheme bdry}. We shall later study a more general case when $\Omega$ is possibly unbounded without boundary data prescribed on $\pO$. 

We begin with an easy construction of an h-quasiconvex $\ul{f}$ as a lower bound of the whole scheme.
\begin{prop}[Existence of h-quasiconvex barriers]\label{prop lower}
Let $\Omega\subset \H$ be a bounded h-convex domain. Assume that $f\in C(\Oba)$ satisfies \eqref{dirichlet-f} and \eqref{coercive-f} for some $K\in \R$. Then there exists $\ul{f}\in C(\Oba)$ h-quasiconvex in $\Omega$ such that $\ul{f}\leq f$ in $\Oba$ and $\ul{f}=K$ on $\partial \Omega$.
\end{prop}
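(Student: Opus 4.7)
The plan is to realize $\underline{f}$ as a composition $g\circ \psi_\Omega$, where $\psi_\Omega$ is the metric-based h-quasiconvex function furnished by Proposition \ref{prop metric-quasi} and $g:\R\to\R$ is a carefully chosen nondecreasing continuous function. Applying Proposition \ref{prop metric-quasi} with $E=\Omega$ itself, the function $\psi_\Omega(p) = -t(p)$ with $t(p):=\tilde{d}_H(p,\H\setminus\Omega)$ is h-quasiconvex in $\Omega$; it is also $1$-Lipschitz with respect to $\tilde{d}_H$, hence continuous and bounded on $\Oba$. Since $\Oba$ is compact, $f$ is uniformly continuous on $\Oba$ in the metric $\tilde{d}_H$; fix any continuous, nondecreasing modulus $\omega_f:[0,\infty)\to[0,\infty)$ with $\omega_f(0)=0$ for $f$.

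The auxiliary fact used below is $t(p) = \tilde{d}_H(p,\partial\Omega)$ for every $p\in\Oba$. The inequality $t(p)\leq \tilde{d}_H(p,\partial\Omega)$ is immediate from $\partial\Omega\subset\H\setminus\Omega$. For the reverse, given $q\in\H\setminus\Omega$ and $r$ slightly larger than $\tilde{d}_H(p,q)$, the right-invariant gauge ball $\tilde{B}_r(p)$ is path-connected (it is a right translate of the Euclidean star-shaped gauge ball centered at the origin), so any continuous path in $\tilde{B}_r(p)$ from $p\in\Oba$ to $q\notin\Omega$ crosses $\partial\Omega$ at some $q'\in\tilde{B}_r(p)$, whence $\tilde{d}_H(p,\partial\Omega)<r$. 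Minimizing over $q$ and letting $r\downarrow \tilde{d}_H(p,q)$ yields $\tilde{d}_H(p,\partial\Omega)\leq t(p)$. Combining this with $f=K$ on $\partial\Omega$ and the uniform continuity of $f$ gives the pointwise bound
\[
0 \leq K - f(p) \leq \omega_f(t(p)), \qquad p\in\Oba.
\]

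Now define $g(s):= K-\omega_f(\max\{-s,0\})$, which is continuous, nondecreasing on $\R$, with $g(0)=K$, and set $\underline{f}:=g\circ \psi_\Omega = K - \omega_f(t)$ on $\Oba$. By construction $\underline{f}\in C(\Oba)$, $\underline{f}=K$ on $\partial\Omega$ (where $t=0$), and $\underline{f}\leq f$ on $\Oba$ by the previous display. Since $\psi_\Omega$ is h-quasiconvex and $g$ is nondecreasing continuous, Lemma \ref{lem geometric} delivers the h-quasiconvexity of $\underline{f}$ in $\Omega$, completing the argument.

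The main obstacle lies in guessing the correct shape for $g$: a naive linear barrier $K - Ct$ fails as soon as $f-K$ decays sublinearly towards $\partial\Omega$, forcing the barrier to be tied to the modulus of continuity of $f$ itself rather than to a fixed profile. A secondary but delicate point is the identification $\tilde{d}_H(p,\H\setminus\Omega) = \tilde{d}_H(p,\partial\Omega)$: because the gauge metric is not geodesic, this is not a path-length statement, and it must instead be extracted from the connectedness of right-invariant gauge balls.
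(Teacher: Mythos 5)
Your proof is correct and follows essentially the same route as the paper: compose the metric-based h-quasiconvex function $\psi=-\tilde{d}_H(\cdot,\H\setminus\Omega)$ from Proposition \ref{prop metric-quasi} with a nondecreasing continuous $g$ built from the modulus of continuity of $f$, then invoke Lemma \ref{lem geometric}. Your additional verification that $\tilde{d}_H(p,\H\setminus\Omega)=\tilde{d}_H(p,\partial\Omega)$ (via connectedness of right-invariant gauge balls) supplies a detail the paper asserts implicitly when it claims $f(p)\geq -\omega_f(\tilde{d}_H(p,\H\setminus\Omega))+K$, and is a welcome clarification rather than a departure.
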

\begin{proof}
By Proposition \ref{prop metric-quasi}, we see 
that 
\[
\psi:=-\tilde{d}_H(\cdot, \H\setminus \Omega)
\]
is an h-quasiconvex function in $\H$. Since $f$ is continuous and $\Omega$ is bounded, there exists a modulus of continuity $\omega_f$, strictly increasing, such that
\[
f(p)\geq -\omega_f (\tilde{d}_H(p, \H\setminus \Omega))+K \quad \text{for all $p\in \Oba$.}
\]
Taking 
\beq\label{g-fun}
g(s)=-\omega_f(-s)+K , \quad s\le 0
\eeq
we immediately get 
\[
f(p)\geq g(-\tilde{d}_H(p, \H\setminus E))\geq g(\psi(p))\quad \text{for all $p\in \Omega$.}
\]
Let $\ul{f}=g\circ\psi$. We easily see that $\ul{f}\leq f$ in $\Oba$ and $\ul{f}=K$ on $\partial \Omega$.
Noticing that $g$ is continuous and nondecreasing, we deduce by Lemma \ref{lem geometric} that $\ul{f}$ is h-quasiconvex in $\Omega$. 
\end{proof}

Note that the existence of the h-quasiconvex envelope $Q(f)$ is guaranteed by the existence of $\ul{f}$. Moreover, due to the conditions above, we have $Q(f)=f=K$ on $\partial \Omega$. 

Let $u_0=f$. By Theorem \ref{thm existence},  for $n=1, 2, \ldots$ we can find a unique solution $u_n$ of \eqref{iteration eq} satisfying \eqref{data bdry} and \eqref{data bdry ineq}. 
By Lemma \ref{lem sub},  one can also see that 
\beq\label{monotone bdry}
\ul{f}\leq \ldots \leq u_n\leq u_{n-1}\leq \ldots \leq u_0 = f \quad\text{in $\Omega$ for $n=1, 2, \ldots$}.
\eeq

We proceed to prove Theorem \ref{thm scheme bdry}. To this end, we mention a fundamental fact on monotone sequences of functions. 
\begin{rmk}\label{rmk limits}
Since $u_n$ is non-increasing in $n$, the pointwise limit $\lim_{n\to \infty}u_n$ is equal to  $\limsup_{n\to \infty}^\ast u_n$, defined by
\[
\limsups_{n\to \infty} u_n(p)=\lim_{k\to \infty} \sup\left\{u_n(q): q\in B_{{1\over k}}(p), \ n\geq k\right\},\quad \text{$p\in \Omega$}.
\]
In fact, for any fixed $p\in \Omega$, $\vep>0$ and any $n\geq 1$, by the upper semicontinuity of $u_n$, we can take $k\geq 1$ sufficiently large to get
\[
u_n(p)\geq \sup\left\{u_n(q): q\in B_{{1\over k}}(p)\right\}-\vep.
\]
By the monotonicity of $u_n$ in $n$, we have 
\[
u_n(p)\geq \sup\{u_n(q): q\in B_{{1\over k}}(p), n\geq k\}-\vep,
\]
which yields
\[
u_n(p)\geq \limsups_{n\to \infty} u_n(p)-\vep.
\]
Letting $n\to \infty$ and $\vep\to 0$, we are led to 
\[
\lim_{n\to \infty} u_n(p)\geq \limsups_{n\to \infty} u_n(p).
\]
Since the reverse inequality clearly holds, we thus obtain the equality. 
\end{rmk}

\begin{prop}[Stability of h-quasiconvexity]\label{prop sub-stability}
Suppose that $\Omega$ is a bounded h-convex domain in $\H$. Let $u_0=f\in USC(\Omega)$ and $u_n\in USC(\Omega)$ be a subsolution of \eqref{iteration eq}. Then 
\[
u=\limsups_{n\to \infty} u_n
\]
is h-quasiconvex in $\Omega$. 
\end{prop}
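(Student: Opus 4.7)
The plan is to invoke the viscosity characterization of h-quasiconvexity established in Theorem \ref{thm char}: it suffices to show that whenever $\varphi\in C^1(\Omega)$ and $p_0\in\Omega$ are such that $u-\varphi$ attains a strict local maximum at $p_0$, one has $\la \nabla_H\varphi(p_0), (p_0^{-1}\cdot \xi)_h\ra\leq 0$ for every $\xi\in \H_{p_0}\cap\Omega$ with $u(\xi)<u(p_0)$. The case $\nabla_H\varphi(p_0)=0$ is trivial, so assume $\nabla_H\varphi(p_0)\neq 0$. First, applying Lemma \ref{lem sub} to each $u_n$, the subsolution property forces $u_n\leq u_{n-1}$ in $\Omega$, so $u_n$ is pointwise non-increasing; combined with Remark \ref{rmk limits} this gives $u(p)=\lim_n u_n(p)$ for every $p$, and $u$ is itself upper semicontinuous as an infimum of USC functions.

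I would then run the standard half-relaxed limit argument tailored to the nonlocal Hamiltonian. Shrink the strict maximum to a compact ball $\overline{B_r(p_0)}\subset\Omega$ and let $p_n\in\overline{B_r(p_0)}$ be a maximizer of $u_n-\varphi$ on this ball; the strict-maximum assumption together with the pointwise monotone convergence $u_n\searrow u$ and the inequality $\limsup_n u_n(p_n)\leq u(p_\infty)$ (for any limit point $p_\infty$, by definition of $\limsup^\ast$) forces $p_n\to p_0$ along a subsequence and $u_n(p_n)\to u(p_0)$. Given a competitor $\xi\in \H_{p_0}\cap\Omega$ with $u(\xi)<u(p_0)$, use the continuous dependence of the horizontal plane $\H_p$ on $p$ to choose $\xi_n\in \H_{p_n}\cap\Omega$ with $\xi_n\to\xi$. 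Since $\limsup_n u_n(\xi_n)\leq u(\xi)<u(p_0)=\lim_n u_n(p_n)$, for $n$ large $\xi_n\in S_{p_n}(u_n)$.

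The crux is then to apply the subsolution property of $u_n$ for the equation \eqref{iteration eq} at $p_n$, testing not the full nonlocal supremum but just the single competitor $\xi_n$, to obtain
\beq\label{plan:test}
\la \nabla_H \varphi(p_n),\ (p_n^{-1}\cdot \xi_n)_h\ra \leq u_{n-1}(p_n)-u_n(p_n).
\eeq
The left-hand side converges to $\la \nabla_H \varphi(p_0),\ (p_0^{-1}\cdot \xi)_h\ra$. For the right-hand side, monotonicity gives $u_{n-1}(p_n)\geq u_n(p_n)$, while the definition of $\limsup^\ast$ applied to the non-increasing sequence forces $\limsup_n u_{n-1}(p_n)\leq u(p_0)$; since $u_n(p_n)\to u(p_0)$, a squeeze yields $u_{n-1}(p_n)-u_n(p_n)\to 0$, and the desired inequality follows.

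The main obstacle is precisely this last squeeze. In the usual half-relaxed limit argument for local Hamilton-Jacobi equations one passes to the limit directly in a continuous Hamiltonian, but here the equation is nonlocal and, moreover, changes with $n$ through its right-hand side $u_{n-1}$. The bypass of the nonlocal supremum via the specific competitor $\xi_n$ is routine once the continuity of $\H_p$ is invoked, whereas controlling the telescoping gap $u_{n-1}(p_n)-u_n(p_n)$ at the contact point is what genuinely uses both the monotonicity coming from \eqref{iteration eq} and the upper semicontinuity inherited by $u$.
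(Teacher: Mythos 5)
Your proposal is correct and follows essentially the same route as the paper: both invoke the viscosity characterization from Theorem \ref{thm char}, extract maximizers $p_n$ of $u_n-\varphi$ converging to $p_0$ with $u_n(p_n)\to u(p_0)$, transport a competitor $\xi\in S_{p_0}(u)$ to points $\xi_n\in S_{p_n}(u_n)\cap\H_{p_n}$ (the paper does this exactly via $p_n^{-1}\cdot\xi_n=p_0^{-1}\cdot\xi$, which is the cleanest way to realize your ``continuity of $\H_p$'' step), and then pass to the limit in the single-competitor subsolution inequality using $u_{n-1}(p_n)-u_n(p_n)\to 0$. The only cosmetic difference is that the paper runs an $\varepsilon$-approximation of the nonlocal supremum rather than testing each competitor individually; the two are equivalent.
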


\begin{proof}
Our argument below in based on the characterization given in Theorem \ref{thm char}. 
Suppose that there exist $p_0\in \Omega$ and $\varphi\in C^1(\Omega)$ such that $u-\varphi$ attains a strict maximum in $\Omega$ at $p_0$.  Then by Remark \ref{rmk limits}, there exists $p_n\in \Omega$ such that $u_n-\varphi$ attains a local maximum at $p_n$ and $p_n\to p_0$, $u_n(p_n)\to u(p_0)$ as $n\to \infty$. 

We may assume that $\nabla_H\varphi(p_0)\neq 0$, for otherwise it is clear that 
\[
\la\nabla_H \varphi(p_0), (p_0^{-1}\cdot \xi)_h\ra=0
\]
holds for all $\xi\in S_{p_0}(u)$.  We thus have $S_{p_0}(u)\neq \emptyset$. For any $\vep>0$, let $\xi_0\in S_{p_0}(u)$ satisfy 
\beq\label{scheme pf eq1}
\la \nabla_H\varphi(p_0), (p_0^{-1}\cdot \xi_0)_h\ra\geq \sup\left\{\la \nabla_H \varphi(p_0),  (p_0^{-1}\cdot \xi)_h\ra: \xi\in S_{p_0}(u)\right\}-\vep.
\eeq
Since $u(\xi_0)<u(p_0)$, we have $u_n(\xi_0)<u_n(p_n)$ when $n$ is sufficiently large. Then there is a point $\xi_n\in \H_{p_n}$ satisfying $u_n(\xi_n)<u_n(p_n)$ for $n$ large and $p_n^{-1}\cdot \xi_n=p_0^{-1}\cdot \xi_0$. 
Applying the definition of subsolutions of \eqref{iteration eq}, we get
\[
u_n(p_n)+\la \nabla_H\varphi(p_n), (p_n^{-1}\cdot \xi_n)_h\ra\leq u_{n-1}(p_n),
\]
which is equivalent to 
\[
u_n(p_n)+\la \nabla_H\varphi(p_n), (p_0^{-1}\cdot \xi_0)_h\ra\leq u_{n-1}(p_n).
\]
Passing to the limit as $n\to \infty$, we have
\[
u(p_0)+\la \nabla_H\varphi(p_0), (p_0^{-1}\cdot \xi_0)_h\ra\leq u(p_0).
\]
It follows from \eqref{scheme pf eq1} that
\[
\sup\left\{\la \nabla_H \varphi(p_0),  (p_0^{-1}\cdot \xi)_h\ra: \xi\in S_{p_0}(u)\right\}\leq \vep. 
\]
Letting $\vep\to 0$, we get
\[
\sup\left\{\la \nabla_H \varphi(p_0),  (p_0^{-1}\cdot \xi)_h\ra: \xi\in S_{p_0}(u)\right\}\leq 0.
\]
The proof for h-quasiconvexity of $u$ is now complete. 
\end{proof}

We are now in a position to prove Theorem \ref{thm scheme bdry}.
\begin{proof}[Proof of Theorem \ref{thm scheme bdry}]
In view of Proposition \ref{prop lower}, there exists $\ul{f}\in C(\Oba)$ h-quasiconvex such that $\ul{f}\leq f$ in $\Oba$ and $\ul{f}=f=K$ on $\partial \Omega$. 
Since $u_n\in C(\Oba)$ is a monotone sequence and $u_n\geq \ul{f}$ in $\Oba$, it converges to $u\in USC(\Oba)$ pointwise as $n\to \infty$.  We thus get $u= Q(f)=f=K$ on $\partial \Omega$. By Proposition \ref{prop sub-stability}, 
we see that $u$ is h-quasiconvex in $\Omega$. It follows immediately that $u\leq Q(f)$ in $\Omega$. 

Note that $Q(f)$ is a subsolution of \eqref{iteration eq} for every $n$ satisfying $Q(f)= f$ on $\partial\Omega$. Then by the comparison principle, we have $Q(f)\leq u_n$ in $\Oba$. We thus have $Q(f)\leq u$ in $\Oba$ and therefore $u_n\to Q(f)$ pointwise in $\Oba$ as $n\to \infty$. 

The uniform convergence of $u_n$ requires extra work. Let us extend the definitions of $u_n$ ($n=0, 1, 2, \ldots $) by constant $K$ to the whole space $\H$, that is, 
\[
u^K_n(p)=\begin{cases}
u_n(p) & \text{if $p\in \Omega$,}\\
K & \text{if $p\in\H\setminus\Omega$.}
\end{cases}
\]
Let us verify that $u^K_n$ is a solution of 
\beq\label{extend pde}
u^K_n(p)+H(p, u^K_n(p), \nabla_H u^K_n(p))=u^K_{n-1}(p) \quad \text{in $\H$}.
\eeq
In fact, it is obvious that $u^K_n$ is a supersolution. Below we show that it is a subsolution. 
Suppose that there is a test function $\varphi\in C^1(\H)$ such that $u^K_n-\varphi$ attains a maximum at $p_0\in \H$. We easily see that 
\beq\label{extend sub}
u^K_n(p_0)+\sup\left\{\la \nabla_H \varphi(p_0),  (p_0^{-1}\cdot \xi)_h\ra:\ \xi\in S_{p_0}(u_n^K) \right\}\leq u^K_{n-1}(p_0)
\eeq
 holds if $p_0\in \H\setminus \Oba$ or $p_0\in \Omega$. In the case that $p_0\in \partial \Omega$, extending $\ul{f}$ to get 
\[
\ul{f}^K(p)=\begin{cases}
\ul{f}(p) & \text{if $p\in \Omega$,}\\
K & \text{if $p\in\H\setminus\Omega$.}
\end{cases}
\]
we observe that $\ul{f}^K$ is h-quasiconvex in $\H$ and $\ul{f}^K-\varphi$ also attains a maximum at $p_0$; the latter is due to the facts that $\ul{f}^K\leq u^K_n$ in $\H$ and $\ul{f}^K=u^K_n=K$ on $\partial\Omega$. It follows from Theorem \ref{thm char} that 
\[
\sup\left\{\la \nabla_H \varphi(p_0),  (p_0^{-1}\cdot \xi)_h\ra:\ \xi\in S_{p_0}\left(\ul{f}^K\right) \right\}\leq 0,
\]
which implies that 
\[
\sup\left\{\la \nabla_H \varphi(p_0),  (p_0^{-1}\cdot \xi)_h\ra:\ \xi\in S_{p_0}\left(u_n^K\right) \right\}\leq 0.
\]
It is then clear that \eqref{extend sub} holds again, since $p_0\in \partial \Omega$ and thus $u_n^K(p_0)=u_{n-1}^K(p_0)=K$. Hence, $u_n^K$ is a subsolution of \eqref{extend pde}.

We next translate the extended solutions. For any fixed $h\in \H$, we have
\beq\label{equiconti1}
u^K_{n-1, h}(p)-\omega_f(|h|_G)\leq u^K_{n-1}(p)\quad \text{in $\H$ }
\eeq
holds for $n=1$, where $\omega_f$ is the modulus of continuity of $f$ in $\Oba$. 
Using Lemma \ref{lem basic property}, we can also deduce that $u^K_{n, h}(p)=u^K_n(h\cdot p)-\omega_f(|h|_G)$ is a subsolution of 
\[
u^K_{n, h}(p)+H(p, u^K_{n, h}(p), \nabla_H u^K_{n, h}(p))=u^K_{n-1, h}(p)-\omega_f(|h|_G) \quad \text{in $\H$,}
\]
and therefore by \eqref{equiconti1} is a subsolution of 
\[
u^K_{n, h}(p)+H(p, u^K_{n, h}(p), \nabla_H u^K_{n, h}(p))=u^K_{n-1}(p) \quad \text{in $\H$}
\]
in the case that $n=1$. Take a bounded h-convex domain $\Omega'$ such that 
\[
\{p\in \H: |p^{-1}\cdot q|_G\leq |h|, \ q\in \Oba\}\subset \Omega'.
\]
Applying the comparison principle, Theorem \ref{thm comp}, in $\Omega'$, we are led to
\[
u^K_{1, h}(p)-\omega_f(|h|_G)\leq u^K_1(p)\quad \text{in $\Omega'$,}
\]
which implies \eqref{equiconti1} with $n=2$. 

We can repeat the argument to show that \eqref{equiconti1} holds for all $n\geq 1$. Exchanging the roles of $u^K_n$ and $u^K_{n, h}$, we deduce that 
\[
|u^K_{n, h}-u^K_{n}|\leq \omega_f(|h|_G) \quad \text{in $\H$},
\]
which yields 
\[
|u_{n}(h\cdot p)-u_{n}(p)|\leq \omega_f(|h|_G) 
\]
for all for all $p\in (h^{-1}\cdot\Oba)\cap \Oba$ and $n\geq 1$. Due to the arbitrariness of $h$, we get the equi-continuity of $u_n$ in $\Oba$ with modulus $\omega_f$. It follows that 
$Q(f)=\inf_{n\geq 1} u_n$ is also continuous in $\Oba$ with the same modulus. 
The uniform convergence of $u_n$ to $Q(f)$ in $\Oba$ as $n\to \infty$ is an immediate consequence of Dini's theorem. 
\end{proof}

\begin{rmk}\label{rmk convergence2}
It is possible to use the same scheme to approximate the h-quasiconvex envelope $Q(f)$ of a function $f\in C(\Oba)$ that takes general boundary values. In this case, the uniqueness and existence of $u_n$ are guaranteed by Theorem \ref{thm comp2} and Theorem \ref{thm existence2}. The pointwise convergence of $u_n$ to $Q(f)$ can be shown in the same way as in the proof of Theorem \ref{thm scheme bdry}. It is also possible to obtain uniform convergence if $\ul{f}$ can be extended to an h-quasiconvex function in a neighborhood of $\Oba$. 
\end{rmk}

We conclude this section by providing a relaxed version of Theorem \ref{thm scheme bdry} with no assumptions on boundedness or convexity of $\Omega$.  
Based on Proposition \ref{prop sub}, for any $f\in USC(\Omega)$ that is bounded below, one can still obtain a sequence $u_n$ that converges pointwise to $Q(f)$ in $\Omega$ without even using the boundary value of $f$. In this general case, we can take $u_n$ to be the maximum subsolution of \eqref{iteration eq}. 
 
More precisely, let $u_0=f$ and, for $n=1, 2, \ldots$, let $u_n$ be the maximum subsolution of \eqref{iteration eq}, that is, 
\beq\label{iteration sub1}
u_n=v_n^\ast\quad \text{in $\Omega$},
\eeq
where, for any $p\in \Omega$, 
\beq\label{iteration sub2}
v_n(p)=\sup\{u(p): \ \text{$u\in USC(\Omega)$ is a subsolution of \eqref{hj eq} with $f=u_{n-1}$}\}.
\eeq
By Proposition \ref{prop sub}, $u_n$ is indeed a subsolution of \eqref{hj eq} with $f=u_{n-1}$.

\begin{thm}[Iterative scheme for envelope without boundary data]\label{thm scheme}
Let $\Omega$ be an h-convex domain in $\H$ and $f\in USC(\Omega)$. Assume that there exists an h-quasiconvex function $\ul{f}\in USC(\Omega)$ such that $f\geq \ul{f}$ in $\Omega$. 
Let $u_n\in USC(\Omega)$ be iteratively defined by \eqref{iteration sub1}--\eqref{iteration sub2}. Then $u_n\to Q(f)$ pointwise in $\Omega$ as $n\to \infty$, where $Q(f)\in USC(\Omega)$ is the h-quasiconvex envelope of $f$. 
\end{thm}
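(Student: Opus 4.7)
The plan is to trap the iterates in a sandwich $Q(f)\le u_n\le u_{n-1}$ that forces the monotone pointwise limit to coincide with $Q(f)$. The key observation driving everything is the following: any upper semicontinuous h-quasiconvex function $g$ satisfying $g\le h$ is a subsolution of $u+H(p,u,\nabla_Hu)=h$. Indeed, by Theorem \ref{thm char} the supremum in \eqref{sub-property} vanishes at every test point of $g$ (it is bounded above by $0$ and is always at least $0$ by choosing $\xi\in S_p(u)$ approaching $p$ along $\H_p$), so the subsolution condition reduces to $g\le h$. Applied to the h-quasiconvex barrier $\ul f$, this ensures that the set in \eqref{iteration sub2} is non-empty at step $n=1$, Proposition \ref{prop sub} guarantees that $u_1=v_1^*$ is itself a subsolution, and Lemma \ref{lem sub} gives $u_1\le u_0=f$; maximality in turn yields $u_1\ge \ul f$. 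Iterating this argument shows $\ul f\le u_n\le u_{n-1}$ throughout, so the pointwise decreasing limit $u:=\lim_{n\to\infty}u_n$ exists everywhere in $\Omega$.

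For the lower bound I would argue by induction that $Q(f)\le u_n$. The base case $Q(f)\le u_0=f$ is immediate from Definition \ref{defi envelope}. Assuming $Q(f)\le u_{n-1}$, the observation above shows that $Q(f)$, being h-quasiconvex and dominated by $u_{n-1}$, is itself a subsolution of \eqref{iteration eq}; maximality of $v_n$ then gives $Q(f)\le v_n\le v_n^*=u_n$. Combining both bounds gives $Q(f)\le u\le f$ on $\Omega$.

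It remains to show that the limit $u$ is h-quasiconvex, for then Definition \ref{defi envelope} forces $u\le Q(f)$ and closes the sandwich. As a pointwise infimum of upper semicontinuous functions, $u$ is itself upper semicontinuous, so Remark \ref{rmk limits} identifies it with $\limsups_{n\to\infty}u_n$. Since each $u_n$ is a subsolution of \eqref{iteration eq}, Proposition \ref{prop sub-stability} then yields the h-quasiconvexity of $u$. The main technical subtlety is that Proposition \ref{prop sub-stability} was stated for a bounded h-convex domain, whereas here $\Omega$ may be unbounded; however, the proof of that proposition is entirely local (all test computations happen in a neighborhood of a single point $p_0$, and the perturbations $\xi_n$ are chosen in $\H_{p_n}$ arbitrarily close to $p_n$), so the same argument applies verbatim. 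Equivalently, one may re-derive the h-quasiconvexity of $u$ directly from the viscosity characterization of Theorem \ref{thm char} by repeating that stability computation on any bounded h-convex subdomain of $\Omega$. This completes the proof sketch, with the mild extension of Proposition \ref{prop sub-stability} to unbounded domains being the only subtle ingredient.
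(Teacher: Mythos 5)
Your proposal is correct and follows essentially the same route as the paper's (terse) proof: monotonicity $\ul{f}\le u_n\le u_{n-1}$ via Lemma \ref{lem sub}, the bound $Q(f)\le u_n$ obtained from the maximality of $u_n$ among subsolutions rather than from the comparison principle (exactly the point the paper emphasizes), and h-quasiconvexity of the decreasing limit via Remark \ref{rmk limits} and Proposition \ref{prop sub-stability}. Your observation that Proposition \ref{prop sub-stability} is stated for bounded domains but has a purely local proof is a legitimate detail that the paper glosses over, and your justification of it is sound.
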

\begin{proof}
By Lemma \ref{lem sub}, we still have \eqref{monotone bdry}. 
It then follows from Remark \ref{rmk limits} and Proposition \ref{prop sub-stability} that $\lim_{n\to \infty} u_n\in USC(\Omega)$ and
\[
\lim_{n\to \infty} u_n\leq Q(f) \quad \text{in $\Omega$.}
\] 
We can also get the reversed inequality by showing that $Q(f)\leq u_n$ in $\Omega$ for all $n\geq 1$. Note that here we cannot prove it by the comparison principle, Theorem \ref{thm comp},  due to the possible unboundedness of $\Omega$ and loss of the boundary data. Instead, we use the maximality of $u_n$ among all subsolutions of \eqref{iteration eq}. 
\end{proof}

\section{H-convex hull}\label{sec:convex-hull}

In this section we study the h-convex hull of a given set in $\mathbb{H}$.
Using the definition of h-convex sets introduced in Definition \ref{def h-set}, we can define the h-convex hull of a set in the following natural way. 

\begin{defi}[H-convex hull]\label{def hull}
For a set $E \subset \H$ we denote by $\coh (E)$ the h-convex hull of $E$ defined to be the smallest h-convex set in $\H$ containing $E$, i.e., 
\[
\coh(E)=\bigcap \, \{D\subset \H: \text{$D$ is h-convex and satisfies $E\subset D$} \}.
\]
\end{defi}
Below we attempt to understand several basic properties of h-convex hulls. 

\subsection{Level set formulation}
We first establish connection between h-quasiconvex envelopes and h-convex hulls. Our general process of convexifying an open or closed set is an adaptation of the so-called level set method, which can be summarized as follows. 
\begin{enumerate}
\item For a given open (resp., closed) set $E\subset \H$, we take a function $f\in C(\H)$ such that 
\beq\label{appl1}
E= \{p\in \H: f(p)<0\} \quad (\text{resp., }  \ E=\{p\in \H: f(p)\leq 0\}).
\eeq
\item We construct the h-quasiconvex envelope $Q(f)$. 
\item The h-convex hull turns out to be the $0$-sublevel set of $Q(f)$, that is, 
\[
\coh(E)=\{p\in \H: Q(f)(p)<0\} \quad (\text{resp., }  \ \coh(E)=\{p\in \H: Q(f)(p)\leq 0\}).
\]
\end{enumerate}

Let us prove the result stated in the step (3) above under more precise assumptions. We first examine the case when $E$ is a bounded open or closed set in $\H$. Then we can take $\Omega=B_{R}(0)\supset \overline{\coh(E)}$ with $R>0$ large and use Theorem \ref{thm scheme bdry} to construct the h-quasiconvex envelope $Q(f)\in C(\Omega)$ for a defining function $f\in C(\Omega)$ of the set $E$.

\begin{thm}[Level set method for h-convex hull]\label{thm hull1}
Let $E\subset \H$ be a bounded open (resp., closed) set. Let $R>0$ large such that $\overline{\coh(E)}\subset B_R(0)$. 
Assume that $f\in C(\H)$ satisfies \eqref{appl1}. Assume also that there exists $K>0$ such that 
\beq\label{tech cond1}
\begin{cases}
f\leq K &\quad \text{in $B_R(0)$,}\\
f\equiv K  &\quad \text{in $\H\setminus B_{R}(0)$.} 
\end{cases}
\eeq
Let $Q(f)$ be the h-quasiconvex envelope of $f$. Then, $Q(f)\in C(\H)$ and $Q(f)$ satisfies 
\beq\label{appl3}
Q(f)\equiv K \quad \text{in $\H\setminus B_R(0)$}
\eeq
and
\beq\label{appl2}
\coh(E)=\{p\in \Omega: Q(f)(p)<0\} \quad (\text{resp., }  \ \coh(E)=\{p\in \Omega: Q(f)(p)\leq 0\}).
\eeq
\end{thm}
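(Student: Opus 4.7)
My plan is to construct $Q(f)$ globally on $\H$ by extending the envelope $Q_\Omega(f)$ furnished by Theorem~\ref{thm scheme bdry}, and then identify $\coh(E)$ as the appropriate sublevel set of $Q(f)$ by exhibiting explicit h-quasiconvex minorants of $f$.

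For the first part, continuity of $f$ together with \eqref{tech cond1} forces $f=K$ on $\partial\Omega$, and after replacing $f$ by $\min\{f,K\}$ (which preserves $\{f<0\}$ and $\{f\leq 0\}$) the hypotheses \eqref{dirichlet-f}--\eqref{coercive-f} hold. Theorem~\ref{thm scheme bdry} then supplies $Q_\Omega(f)\in C(\Oba)$ with $Q_\Omega(f)=K$ on $\partial\Omega$. I extend by $K$ outside $\Omega$ to obtain $\tilde Q\in C(\H)$ and check h-quasiconvexity sublevel set by sublevel set: for $\lambda\geq K$ the set $\{\tilde Q\leq\lambda\}=\H$ is trivially h-convex, and for $\lambda<K$ it equals $\{Q_\Omega(f)\leq\lambda\}\subset\Omega$, h-convex because $\Omega$ itself is h-convex and $Q_\Omega(f)$ is h-quasiconvex in $\Omega$ (horizontal segments between interior points stay inside). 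A direct comparison then shows $\tilde Q=Q(f)$: any h-quasiconvex $g\leq f$ on $\H$ restricts to an h-quasiconvex minorant of $f|_\Omega$, hence $g\leq Q_\Omega(f)=\tilde Q$ in $\Oba$, while $g\leq f=K=\tilde Q$ outside. This yields continuity of $Q(f)$ on $\H$ and \eqref{appl3}.

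For the open case $\coh(E)=\{Q(f)<0\}$, the inclusion $\coh(E)\subset\{Q(f)<0\}$ follows from $Q(f)\leq f<0$ on $E$ together with h-convexity of strict sublevel sets of h-quasiconvex functions (see the remark after Definition~\ref{def h-fun}). For the reverse I introduce the two-value h-quasiconvex minorant
\[
g(q)=\begin{cases} -c,& q\in\coh(E),\\ 0,& q\notin\coh(E),\end{cases}
\]
with $c:=\sup_{\coh(E)}|f|<\infty$ (finite since $\coh(E)\subset\subset\Omega$). Its sublevel sets are among $\emptyset$, $\coh(E)$, $\H$, all h-convex, so $g$ is h-quasiconvex. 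Moreover $g\leq f$ holds on $\coh(E)$ by the choice of $c$, and on $\H\setminus\coh(E)$ because $\{f<0\}=E\subset\coh(E)$ forces $f\geq 0=g$ there. Hence $g\leq Q(f)$ and $Q(f)\geq 0$ outside $\coh(E)$.

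The closed case $\coh(E)=\{Q(f)\leq 0\}$ is more delicate and is where I expect the main obstacle. The inclusion $\coh(E)\subset\{Q(f)\leq 0\}$ is as before. For the converse I plan to reduce to the open case via $Q(f-\epsilon)=Q(f)-\epsilon$ (Lemma~\ref{lem basic property}(ii) combined with Proposition~\ref{prop stability}), applying the open-case identity to $f-\epsilon$ to get $\{Q(f)<\epsilon\}=\coh(\{f<\epsilon\})$ for every $\epsilon>0$, so that
\[
\{Q(f)\leq 0\}=\bigcap_{\epsilon>0}\coh(\{f<\epsilon\}).
\]
The inclusion $\coh(E)\subset\bigcap_\epsilon \coh(\{f<\epsilon\})$ is immediate from monotonicity of $\coh$. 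The hard part is the reverse collapse: the two-value minorant from the open case fails to produce a strict gap, because $\inf_{\H\setminus\coh(E)}f$ can vanish at points of $\partial E$ lying on $\partial\coh(E)$. I would try to resolve this either by localization at a fixed $p\notin\coh(E)$, using continuity of $f$ to obtain a positive lower bound on $f$ near $p$ and combining Proposition~\ref{prop metric-quasi} with Lemma~\ref{lem geometric} to build a continuous h-quasiconvex $g\leq f$ with $g(p)>0$, or by invoking compactness of $\coh(E)$ to show directly that the nested intersection $\bigcap_\epsilon\coh(\{f<\epsilon\})$ collapses to $\coh(E)$.
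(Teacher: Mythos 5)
Your treatment of the global extension and of the open case is correct, and for the reverse inclusion in the open case you take a genuinely more elementary route than the paper: where the paper builds a \emph{continuous} h-quasiconvex minorant $g\circ\psi_{\coh(E)}$ out of Proposition~\ref{prop metric-quasi}, \eqref{g-fun} and Lemma~\ref{lem geometric}, you simply use the two-valued function equal to $-c$ on $\coh(E)$ and $0$ outside, whose sublevel sets are $\emptyset$, $\coh(E)$, $\H$. This is legitimate because Definition~\ref{defi envelope} imposes no regularity on competitors, and it bypasses the modulus-of-continuity bookkeeping entirely. Two small points to tighten. First, after replacing $f$ by $\min\{f,K\}$ you must still say something about $Q(f)$ itself; the clean patch is to observe that $\min\{Q(f),K\}$ is h-quasiconvex (its sublevel sets are those of $Q(f)$ or all of $\H$) and lies below $\min\{f,K\}$, whence $Q(\min\{f,K\})=\min\{Q(f),K\}$, so the two envelopes have the same sublevel sets below $K$ and \eqref{appl3} transfers. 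Second, in identifying $\tilde Q$ with $Q(f)$ you restrict a global competitor $g\leq f$ to $\Omega$ and compare it with the envelope of the \emph{modified} datum; the same identity $Q(\min\{f,K\})=\min\{Q(f),K\}$ repairs this.

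The closed case is a genuine gap, as you acknowledge, and you have correctly diagnosed that it is \emph{not} a routine repetition: the two-valued minorant only yields $Q(f)\geq 0$ off $\coh(E)$, not $Q(f)>0$, because $\inf f$ over the complement of $\coh(E)$ can vanish along $\partial E$. Your reformulation $\{Q(f)\leq 0\}=\bigcap_{\epsilon>0}\coh(\{f<\epsilon\})$ is valid (via $Q(f-\epsilon)=Q(f)-\epsilon$ and the open case), but the collapse of that nested intersection to $\coh(\{f\leq 0\})$ is precisely a continuity-from-above statement for $\coh$, and the paper's own Example~\ref{ex2} shows that h-convex hulls are \emph{not} stable under perturbation of the set, so this cannot be waved through. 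For what it is worth, the paper itself disposes of the closed case with the single sentence ``one can use the same argument,'' and its open-case minorant $\psi_{\coh(E)}=-\tilde d_H(\cdot,\H\setminus\coh(E))$ does not transfer either, since $\{\psi_{\coh(E)}\leq 0\}=\H$ and Proposition~\ref{prop metric-quasi} is stated for open sets. So you have not merely left a step unfinished; you have isolated a point where the paper's argument is also thinner than it appears. To actually close it along your localization line, for $p_0\notin\coh(E)$ you would need an h-convex set $D$ containing a full $\delta$-neighborhood of $E$ (so that $f\geq\eta(\delta)>0$ on $\H\setminus D$ by compactness) while excluding $p_0$; producing such a $D$ from $p_0\notin\coh(E)$ is again the stability question in disguise, so this direction needs a real new idea rather than a routine compactness argument.
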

\begin{proof}
Let us only give a proof in the case when $E$ is a bounded open set. One can use the same argument to prove the result for a bounded closed set $E$.
In our current setting, we take $\Omega=B_R(0)$ and choose
\[
\ul{f}(p)=\min\{L(|p|_G-R), 0\}+K, \quad p\in \H
\]
with $L>0$ large so that all of the assumptions in Theorem \ref{thm scheme bdry} are satisfied for any $n>0$. We thus have $Q(f)\in C(\Oba)$ and \eqref{appl3}. 

Note that $\coh(E)\subset\{p\in \H: Q(f)(p)<0\}$
holds, since the right hand side is h-quasiconvex and contains $E$. In order to show \eqref{appl2}, it thus suffices to prove the reverse implication. To this end, we recall from Proposition \ref{prop metric-quasi} that 
\[
\psi_{\coh(E)}:=-\tilde{d}_H(\cdot, \H\setminus \coh(E))
\]
is an h-quasiconvex function in $\H$.
We adopt the same proof of Proposition \ref{prop lower} to construct an h-quasiconvex function $\ul{f}:=g\circ \psi_{\coh(E)}$ in $\Omega$ satisfying 
$f(p)\geq \ul{f}(p)$ for all $p\in \Omega$,
where $g$ is determined by the modulus of continuity $\omega_f$ of $f$, given by \eqref{g-fun}. It follows that
\[
Q(f)\geq Q(g\circ \psi_{\coh(E)})=g\circ \psi_{\coh(E)}.
\]
Since $g$ is actually strictly increasing, we are led to
\[
\begin{aligned}
\{p\in \H: Q(f)(p)<0\}&\subset \{p\in \H: g(\psi_{\coh(E)}(p))<0\}\\
&=\{p\in \H: \psi_{\coh(E)}(p)<0\}=\coh(E).
\end{aligned}
\]
The proof is now complete. 
\end{proof}

As an immediate consequence of the result above, if $E\subset \H$ is a bounded open set, its h-convex hull $\coh(E)$ is also bounded and open as well. 
Likewise, $\coh(E)$ is bounded and closed provided that $E\subset \H$ is bounded and closed. 

The h-convex hull $\coh(E)$ essentially does not depend on the choices of $\Omega$ and $f\in C(\H)$ as long as $\Omega$ is large enough to contain $\coh(E)$ and the $0$-sublevel set of $f$ agrees with $E$. 

Following a similar argument as in the proof of Theorem \ref{thm hull1}, one can construct the h-convex hull of a general, possibly unbounded open set $E\subset \H$. In this case, we study $\coh(E)$ in $\Omega=\H$ without imposing technical assumptions like \eqref{tech cond1}. We can use Theorem \ref{thm scheme} to get $Q(f)\in USC(\H)$.

\begin{thm}\label{thm hull2}
Let $E\subset \H$ be an open set.  
Assume that $f\in C(\H)$ satisfies 
\[
E= \{p\in \H: f(p)<0\}.
\]
Let $Q(f)$ be the h-quasiconvex envelope of $f$. Then $Q(f)\in USC(\H)$ and
\[
\coh(E)=\{p\in \H: Q(f)(p)<0\}. 
\]
\end{thm}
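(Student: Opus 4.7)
The plan is to establish Theorem~\ref{thm hull2} through a sublevel-set identity for $Q(f)$, paralleling the Euclidean formula $Q(f)(p)=\inf\{a:p\in\operatorname{co}(\{f\le a\})\}$. This reformulation reduces everything to monotonicity of $\coh(\cdot)$ under inclusion and bypasses the PDE scheme, whose convergence relied on boundedness and barrier assumptions absent here.

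First I would verify $Q(f)\in USC(\H)$ and that $Q(f)$ is h-quasiconvex. The pointwise sup is h-quasiconvex at once, since $\{Q(f)\le\lambda\}=\bigcap_{g}\{g\le\lambda\}$ is an intersection of h-convex sublevel sets of its minorants. For upper semicontinuity, replace $Q(f)$ by its USC envelope $(Q(f))^{*}$: by Remark~\ref{rmk supremum} (whose content is the Perron-type stability argument of Proposition~\ref{prop sub}), $(Q(f))^{*}$ is h-quasiconvex, and $(Q(f))^{*}\le f^{*}=f$ because $f\in C(\H)$. Hence $(Q(f))^{*}$ is admissible in the supremum defining $Q(f)$, so $(Q(f))^{*}\le Q(f)$, and the reverse is automatic, yielding $Q(f)=(Q(f))^{*}\in USC(\H)$.

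The core step is the identity
\[
Q(f)(p)\;=\;\widetilde{Q}(f)(p)\;:=\;\inf\bigl\{a\in\R:\;p\in\coh(\{f\le a\})\bigr\},
\]
where the infimum is permitted to equal $-\infty$. To show $\widetilde{Q}(f)\le Q(f)$, I would first verify that $\widetilde{Q}(f)$ is itself h-quasiconvex and satisfies $\widetilde{Q}(f)\le f$: writing $\{\widetilde{Q}(f)<\lambda\}=\bigcup_{a<\lambda}\coh(\{f\le a\})$ and noting that the family $\{\coh(\{f\le a\})\}_{a}$ is totally ordered by inclusion, any two points $p,q$ in the union lie in a common $\coh(\{f\le a_{0}\})$, so the horizontal segment joining them (when $q\in\H_{p}$) stays inside this single h-convex set; the bound $\widetilde{Q}(f)\le f$ is obtained by taking $a=f(p)$. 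Conversely, any h-quasiconvex $g\le f$ has the h-convex sublevel set $\{g\le a\}$ containing $\{f\le a\}$ and therefore containing $\coh(\{f\le a\})$; consequently $g(p)\le a$ whenever $p\in\coh(\{f\le a\})$, which yields $g\le\widetilde{Q}(f)$ and hence $Q(f)\le\widetilde{Q}(f)$.

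Finally I would deduce the set identity. From the formula, $\{Q(f)<0\}=\bigcup_{a<0}\coh(\{f\le a\})$. Since $\{f\le a\}\subset E$ for every $a<0$, monotonicity of $\coh$ gives $\{Q(f)<0\}\subset\coh(E)$. For the reverse inclusion, continuity of $f$ yields $E=\bigcup_{a<0}\{f\le a\}$, so the same directed-union argument shows that $\bigcup_{a<0}\coh(\{f\le a\})$ is h-convex; since it contains $E$, it must contain $\coh(E)$. The main technical point I expect to be delicate is the h-convexity of a totally ordered union of h-convex sets (not obvious given pathologies such as Example~\ref{ex1}), but this reduces to the short directedness argument outlined above, after which the proof is a matter of bookkeeping.
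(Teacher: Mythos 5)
Your route is genuinely different from the paper's. The paper proves Theorem \ref{thm hull2} (by reference to the proof of Theorem \ref{thm hull1}) via a sandwich: the inclusion $\coh(E)\subset\{Q(f)<0\}$ is immediate because the right-hand side is an h-convex set containing $E$, while the reverse inclusion is obtained by manufacturing an explicit h-quasiconvex minorant $g\circ\psi_{\coh(E)}$ of $f$, with $\psi_{\coh(E)}=-\tilde{d}_H(\cdot,\H\setminus\coh(E))$ the metric function of Proposition \ref{prop metric-quasi} and $g$ built from a modulus of continuity of $f$, whose $0$-sublevel set is exactly $\coh(E)$. You instead establish the representation $Q(f)(p)=\inf\{a: p\in\coh(\{f\le a\})\}$ and read off both inclusions from it. That part of your argument is correct and complete: the totally ordered union $\bigcup_{a<\lambda}\coh(\{f\le a\})$ is h-convex because any two of its points lie in a common member of the chain, the two comparison inequalities between $Q(f)$ and $\widetilde Q(f)$ are sound, and the set identity follows. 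This buys you something real: no metric barrier, no modulus of continuity of $f$, and no appeal to the PDE scheme for the set identity. (Minor caveat, shared with the paper's own statement: if $f$ admits no h-quasiconvex minorant, your infimum can be $-\infty$ and $Q(f)$ is not real-valued; this degenerate case should be excluded or handled separately.)

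There is, however, a gap in your proof that $Q(f)\in USC(\H)$. You invoke Remark \ref{rmk supremum} to conclude that $(Q(f))^{*}$ is h-quasiconvex, but that remark (through Proposition \ref{prop sub} and Theorem \ref{thm char}) concerns the upper semicontinuous envelope of a supremum of \emph{upper semicontinuous} h-quasiconvex functions, whereas Definition \ref{defi envelope} takes the supremum over all h-quasiconvex minorants with no semicontinuity imposed; the cited result therefore does not apply as written. The claim is true and can be repaired in two ways. (i) Show directly that the USC envelope $u^{*}$ of an h-quasiconvex $u$ is h-quasiconvex: for $q\in\H_p$ and $w\in[p,q]$, choose $w_k\to w$ with $u(w_k)\to u^{*}(w)$ and left-translate the whole configuration by $w_k\cdot w^{-1}$; the translated points $p_k,q_k$ remain horizontally collinear through $w_k$ and converge to $p,q$, so $u(w_k)\le\max\{u^{*}(p_k),u^{*}(q_k)\}$ and upper semicontinuity of $u^{*}$ finishes the argument. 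This also lets you replace each minorant $g$ by $g^{*}\le f^{*}=f$, reducing to a USC family so that Remark \ref{rmk supremum} becomes applicable. (ii) Alternatively, derive upper semicontinuity from your own formula: since $\{Q(f)<\lambda\}=\bigcup_{a<\lambda}\coh(\{f\le a\})$ and $\coh(\{f\le a_0\})\subset\coh(\{f<a_1\})\subset\{Q(f)<\lambda\}$ for $a_0<a_1<\lambda$, it suffices to know that the h-convex hull of an open set is open, which follows from the same left-translation of segments applied to the one-step convexification. Either fix is short, but one of them must be supplied; note that the perturbation obstruction discussed in Remark \ref{rmk convex boundary} does not arise here, because you move $w$ and carry $p,q$ along with it rather than perturbing $p,q$ while keeping $w$ fixed.
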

The proof is omitted, since it resembles that of Theorem \ref{thm hull1}.

\subsection{Quantitative inclusion principle}

The definition of h-convex hulls, Definition \ref{def hull}, immediately yields the inclusion principle: $\coh(D)\subset \coh(E)$ if $D\subset E$.
Using the formulation via h-quasiconvex envelopes, we can quantify this result for bounded open or closed sets. 
\begin{thm}[Quantitative inclusion principle]\label{thm sep}
Let $D, E$ be two bounded open (resp., closed) sets in $\H$. If $D\subset E$, then $\coh(D)\subset \coh(E)$ and  \eqref{dist sep} holds.
\end{thm}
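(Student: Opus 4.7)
Let $\delta_0$ denote the right-hand side of \eqref{dist sep}. The qualitative inclusion $\coh(D)\subset \coh(E)$ is immediate from Definition \ref{def hull}, since $\coh(E)$ is itself an h-convex set containing $D$. For the quantitative bound we may assume $\delta_0 > 0$. The plan is to show that for every $\delta \in (0, \delta_0)$ the right-invariant $\delta$-neighborhood of $\coh(D)$ lies in $\coh(E)$, and then let $\delta \nearrow \delta_0$.

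The key structural ingredient is the left-translation equivariance of the h-convex-hull operator:
\[
h \cdot \coh(A) = \coh(h \cdot A) \quad \text{for every } h \in \H \text{ and every } A \subset \H.
\]
Since left multiplication sends horizontal planes to horizontal planes, $h\cdot \H_p = \H_{h\cdot p}$, it also sends horizontal segments to horizontal segments, $h\cdot [p, q] = [h\cdot p, h\cdot q]$; hence $A$ is h-convex if and only if $h\cdot A$ is h-convex, and a standard double-inclusion argument yields the equivariance.

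Fix $\delta \in (0, \delta_0)$ and $h \in \H$ with $|h|_G \leq \delta$. For each $p \in D$,
\[
\tilde{d}_H(h\cdot p, p) = |(h\cdot p)\cdot p^{-1}|_G = |h|_G \leq \delta < \delta_0,
\]
so $h\cdot p$ cannot lie in $\H\setminus E$; thus $h \cdot D \subset E$. The qualitative inclusion principle together with the translation equivariance then gives
\[
h \cdot \coh(D) = \coh(h \cdot D) \subset \coh(E).
\]
Since this holds for every $h$ with $|h|_G \leq \delta$, no $q \in \H\setminus \coh(E)$ can be at right-invariant distance at most $\delta$ from any $p \in \coh(D)$. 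Letting $\delta \nearrow \delta_0$ yields \eqref{dist sep}.

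The step to watch is that the proof couples the right-invariant metric $\tilde d_H$ (whose balls are left orbits) with the left-equivariance of $\coh$; the parallel assertion with the left-invariant metric $d_H$ would need right multiplication to preserve horizontal planes, which fails in $\H$, in agreement with the authors' remark that the $d_H$ version is unclear. The sup-convolution route flagged in the introduction, which passes through the PDE characterization of $Q(f)$ and the level-set correspondence of Theorem \ref{thm hull1}, is presumably an analytic incarnation of this same geometric mechanism; the elementary plan above bypasses those tools by working directly from Definition \ref{def hull}.
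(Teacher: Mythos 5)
Your proof is correct, and it takes a genuinely different and more elementary route than the paper's. The paper proves \eqref{dist sep} analytically: it passes to a defining function $f$ of $E$, uses the level-set correspondence of Theorem \ref{thm hull1}, and shows via Lemma \ref{lem sup} that the right-invariant sup-convolution $Q(f)^\delta$ remains h-quasiconvex, so that $\coh(E)_\delta=\{p\in\H: \tilde{B}_\delta(p)\subset \coh(E)\}$ is h-convex and must contain $\coh(D)$. You instead work purely at the level of sets: left translation is an automorphism of the horizontal structure ($h\cdot \H_p=\H_{h\cdot p}$ and $h\cdot[p,q]=[h\cdot p,h\cdot q]$), hence $\coh$ is left-equivariant, and since right-invariant balls are exactly left orbits $\{h\cdot p: |h|_G<\delta\}$, the inclusion $h\cdot D\subset E$ for $|h|_G\leq\delta<\delta_0$ upgrades to $h\cdot\coh(D)\subset\coh(E)$. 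All steps check: $\tilde{d}_H(h\cdot p,p)=|h|_G$, the gauge is symmetric under inversion so $|q\cdot p^{-1}|_G=\tilde{d}_H(p,q)$, and the limit $\delta\nearrow\delta_0$ is harmless. Your closing guess is also accurate: at the set level the paper's $\coh(E)_\delta$ equals $\bigcap_{|h|_G\leq\delta}h^{-1}\cdot\coh(E)$, an intersection of left translates of an h-convex set, so the two mechanisms coincide. What your argument buys is brevity and independence from the PDE machinery (no defining function, no approximation \eqref{sup approx}, no viscosity test functions); what the paper's route buys is the stronger function-level statement of Lemma \ref{lem sup} (sup-convolution preserves h-quasiconvexity of arbitrary bounded continuous functions), which is of independent interest and fits the level-set framework the rest of Section \ref{sec:convex-hull} is built on.
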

In order to prove this result, let us introduce the following sup-convolution for a bounded continuous function $u$ with $\delta>0$: 
\beq\label{sup-convolution}
u^\delta(p)=\sup_{\tilde{d}_H(p, q)< \delta} u(q),\quad p\in \H.
\eeq
Due to the continuity of $u$, we also have
\[
u^\delta(p)=\max_{\tilde{d}_H(p, q)\leq  \delta} u(q), \quad p\in \H.
\]
It turns out that this sup-convolution also preserves h-quasiconvexity. 
\begin{lem}[H-quasiconvexity preserving by sup-convolution]\label{lem sup}
Assume that $u$ is a bounded continuous function in $\H$. Let $u^\delta\in C(\H)$ be given by \eqref{sup-convolution}. Then $u^\delta$ is h-quasiconvex in $\H$ if $u$ is h-quasiconvex in $\H$. 
\end{lem}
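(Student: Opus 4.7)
The plan is to exploit the interplay between the right-invariance of $\tilde{d}_H$ and the left-invariance of the horizontal distribution in $\H$.

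First, I would rewrite the sup-convolution in a form that makes the left action of $\H$ explicit. Setting $h = p \cdot q^{-1}$ in the definition \eqref{sup-convolution} gives $q = h^{-1} \cdot p$ and $\tilde{d}_H(p, q) = |h|_G$, so
\[
u^\delta(p) = \sup\{u(h^{-1} \cdot p) : |h|_G < \delta\},
\]
which means that computing $u^\delta$ amounts to maximizing $u$ over left translates $h^{-1} \cdot p$ of the argument.

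Next, to verify h-quasiconvexity of $u^\delta$ via Definition \ref{def h-fun}, I would fix $p \in \H$, $q \in \H_p$ and $w \in [p, q]$, and pick $h_0 \in \H$ with $|h_0|_G \leq \delta$ achieving $u^\delta(w) = u(h_0^{-1} \cdot w)$ (such $h_0$ exists by the max-formulation stated right after \eqref{sup-convolution}). Because the vector fields $X_1, X_2$ are left-invariant, left translation by $h_0^{-1}$ maps the horizontal plane $\H_p$ bijectively onto $\H_{h_0^{-1}\cdot p}$; in particular, the triple $(p, q, w)$ is sent to $(p', q', w') := (h_0^{-1}\cdot p,\, h_0^{-1}\cdot q,\, h_0^{-1}\cdot w)$ with $q' \in \H_{p'}$ and $w' \in [p', q']$. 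Applying the h-quasiconvexity of $u$ to this translated triple yields
\[
u^\delta(w) = u(w') \leq \max\{u(p'), u(q')\}.
\]
Finally, $\tilde{d}_H(p, p') = |p \cdot p^{-1} \cdot h_0|_G = |h_0|_G \leq \delta$, and likewise for $q$, so $u(p') \leq u^\delta(p)$ and $u(q') \leq u^\delta(q)$, giving the desired inequality $u^\delta(w) \leq \max\{u^\delta(p), u^\delta(q)\}$.

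The argument is almost mechanical once the correct rewriting is in place, so there is no serious obstacle. The one conceptual point—and the reason for defining the sup-convolution with the right-invariant metric—is that it is precisely left translations that preserve the horizontal distribution; had one used the left-invariant gauge $d_H$, the sup-convolution would become a supremum over right translates of $p$, which need not preserve horizontal planes, and the argument would break down. This is the same asymmetry that appears in the discussion following \eqref{dist sep}.
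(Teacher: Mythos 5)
Your proof is correct, and it takes a genuinely different and more elementary route than the paper. You verify the defining inequality $u^\delta(w)\leq\max\{u^\delta(p),u^\delta(q)\}$ directly: rewriting the sup-convolution as a supremum of left translates $u(h^{-1}\cdot p)$ over $|h|_G\leq\delta$, you use the fact that left translation carries the horizontal triple $(p,q,w)$ to another horizontal triple, apply the h-quasiconvexity of $u$ there, and then return via the identity $\tilde{d}_H(p,h_0^{-1}\cdot p)=|h_0|_G$. All the individual steps check out (in particular $\H_{g\cdot p}=g\cdot\H_p$ and $g\cdot[p,q]=[g\cdot p,g\cdot q]$ are immediate from $\H_p=p\cdot\H_0$ and the form $w=p\cdot(tv)$ of points on a horizontal segment), and your closing remark correctly identifies why the right-invariant metric is essential. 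The paper instead proves the lemma through the viscosity characterization of Theorem \ref{thm char}: it introduces penalized approximations $u^\delta_\beta(p)=\sup_q\{u(q)-\tilde{d}_H(p,q)^\beta/\delta^\beta\}$, transfers a test function touching $u^\delta_\beta$ at $p_0$ to one touching $u$ at the maximizing point, and then recovers $u^\delta=\sup_\beta u^\delta_\beta$ via the stability of h-quasiconvexity under suprema (Remark \ref{rmk supremum}). Your argument buys brevity and transparency, and it exposes the underlying mechanism (sublevel sets of $u^\delta$ are intersections of left translates of sublevel sets of $u$, each h-convex); the paper's machinery is heavier but is the version that generalizes when one only has the viscosity subsolution property on a domain, where translated points may leave the domain and the direct definitional argument is unavailable. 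Since the lemma is stated for $u$ h-quasiconvex on all of $\H$, your proof suffices as written; the only cosmetic omission is the one-line observation that $u^\delta$ is bounded and continuous, which the lemma's statement presupposes.
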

\begin{proof}
Note that $u^\delta$ is bounded and continuous in $\H$, since $u$ is bounded and continuous. We approximate $u^\delta$ by 
\beq\label{sup approx}
u^\delta_\beta(p)=\sup_{q\in \H}\left\{u(q)-\frac{\tilde{d}_H(p, q)^\beta}{\delta^\beta}\right\}
\eeq
with $\beta>0$ large. It is easily seen that $u^\delta_\beta$ is also bounded and continuous in $\H$. For any $p_0\in \H$, we can find $q_{\delta, \beta}\in \H$ such that 
\beq\label{sep eq4}
u^\delta_\beta(p_0)=u(q_{\delta, \beta})-\frac{\tilde{d}_H(p_0, q_{\delta, \beta})^\beta}{\delta^\beta},
\eeq
which yields 
\[
 \frac{\tilde{d}_H(p_0, q_{\delta, \beta})^\beta}{\delta^\beta}\leq u(q_{\delta, \beta})-u^\delta_\beta(p_0).
 \]
Due to the boundedness of $u$, sending $\beta\to \infty$, we obtain 
\[
\limsup_{\beta\to \infty}\tilde{d}_H(p_0, q_{\delta, \beta})\leq \delta. 
\]
We thus can take a subsequence of $q_{\delta, \beta}$ converging to a point $q_\delta\in \H$, which satisfies $\tilde{d}_H(p_0, q_{\delta})\leq \delta$. It follows that 
\beq\label{sep eq1}
\limsup_{\beta\to \infty} u^\delta_\beta(p_0)\leq u(q_\delta)\leq \max_{\tilde{d}_H(p_0, q)\leq  \delta} u(q)=u^\delta(p_0).
\eeq
On the other hand, noticing that
\[
u^\delta_\beta(p_0)\geq \sup\left\{u(q)-\frac{\tilde{d}_H(p_0, q)^\beta}{\delta^\beta}: \tilde{d}_H(p_0, q)<\delta\right\},
\]
we deduce that
\beq\label{sep eq2}
\sup_{\beta\geq 1} u^\delta_\beta(p_0)\geq \sup\left\{u(q): \tilde{d}_H(p, q)<\delta\right\}=u^\delta(p_0).
\eeq
Combining \eqref{sep eq1} and \eqref{sep eq2} as well as the arbitrariness of $p_0$, we are led to 
\beq\label{sep eq3}
u^\delta(p)=\sup_{\beta\geq 1} u^\delta_\beta(p)\quad\text{for all $p\in \H$.}
\eeq

Suppose that there exist $\varphi\in C^1(\H)$ and $p_0\in \H$ such that $u^\delta_\beta-\varphi$ attains a local maximum at $p_0$. Then
\beq\label{sep eq5}
(p, q)\mapsto u(q)-\varphi(p)-\frac{\tilde{d}_H(p, q)^\beta}{\delta^\beta}
\eeq
attains a maximum at $(p_0, q_{\delta, \beta})$, where $q_{\delta, \beta}\in \H$ is the point satisfying \eqref{sep eq4}. The maximality of \eqref{sep eq5} implies that $u-\psi$ attains a maximum at $q_{\delta, \beta}$, where $\psi\in C^1(\H)$ is given by
\[
\psi(q)=\varphi(p_0)+\frac{\tilde{d}_H(p_0, q)^\beta}{\delta^\beta}.
\]
Since $u$ is h-quasiconvex, we have 
\beq\label{sep eq6}
\sup\left\{\la \nabla_H \psi(q_{\delta, \beta}),  (q_{\delta, \beta}^{-1}\cdot \eta)_h\ra:\ \eta\in S_{q_{\delta, \beta}}(u) \right\}\leq 0.
\eeq
Note that for any $\xi \in S_{p_0} (u^\delta_\beta)$, we can choose $\eta=q_{\delta, \beta}\cdot p_0^{-1}\cdot \xi$ so that $\eta\in S_{q_{\delta, \beta}} (u)$. Indeed, we can easily verify that $\eta\in \H_{q_{\delta, \beta}}$ and by \eqref{sup approx} and \eqref{sep eq4} we obtain
\[
u(\eta)-\frac{\tilde{d}_H(\xi, \eta)^\beta}{\delta^\beta}\leq u^\delta_{\beta}(\xi)<u^\delta_\beta(p_0)=u(q_{\delta, \beta})-\frac{\tilde{d}_H(p_0, q_{\delta, \beta})^\beta}{\delta^\beta}
\]
which reduces to  $u(\eta)<u^\delta_\beta(p_0)$
due to the fact that
\[
\tilde{d}_H(\xi, \eta)=\tilde{d}_H(p_0, q_{\delta, \beta}).
\]
Hence, we have $\eta\in S_{q_{\delta, \beta}} (u)$. 

Since $p_0^{-1}\cdot \xi=q_{\delta, \beta}^{-1}\cdot \eta$ and $\nabla_H\varphi(p_0)=\nabla_H \psi(q_{\delta, \beta})$ (similar to the calculations for \eqref{cp new5}), \eqref{sep eq6} can be rewritten as 
\[
\sup\left\{\la \nabla_H \varphi(p_0),  ({p_0}^{-1}\cdot \xi)_h\ra:\ \xi \in S_{p_0} (u^\delta_\beta) \right\}\leq 0.
\]
By Theorem \ref{thm char}, we see that $u^\delta_\beta$ is h-quasiconvex in $\H$. Thanks to \eqref{sep eq3}, we conclude the proof of the h-quasiconvexity of $u^\delta$ by applying the result in Remark \ref{rmk supremum}.
\end{proof}

\begin{proof}[Proof of Theorem \ref{thm sep}]
Since $E\subset \coh(E)$, we get $D\subset \coh(E)$. Due to the h-convexity of $\coh(E)$, by Definition \ref{def hull} we obtain $\coh(D)\subset \coh(E)$. We next prove \eqref{dist sep}, assuming that $E, D$ are open. The case for closed sets can be similarly treated. Take $\Omega=B_{R}(0)$ and $f\in C(\H)$ as in Theorem \ref{thm hull1}.  In this case, $Q(f)\in C(\H)$ and \eqref{appl2} hold. Since $Q(f)$ is constant outside $B_{R}(0)$, we can also obtain the boundedness of $Q(f)$. 

Adopting Lemma \ref{lem sup}, we see that the sup-convolution $Q(f)^\delta$ of $Q(f)$ is h-quasiconvex in $\H$ for any $\delta>0$.  
It follows that
\[
\coh(E)_\delta:=\{p\in \H: Q(f)^\delta(p)<0\}=\{p\in \H: \tilde{B}_p(\delta)\subset \coh(E)\}
\]
is h-convex, where we recall that $\tilde{B}_p(\delta)$ denotes the right-invariant metric ball centered at $p$ with radius $\delta$. 
By taking $\delta=\inf_{p\in \H\setminus E}\tilde{d}_H(p, D)$, we have 
\[
D\subset \{p\in \H: \tilde{B}_p(\delta)\subset E\}\subset \coh(E)_\delta,
\]
which yields $\coh(D)\subset \coh(E)_\delta$ by the h-convexity of $\coh(E)_\delta$. This gives \eqref{dist sep} immediately.
\end{proof}

\subsection{Continuity under star-shapedness}\label{sec:cont}
To conclude this work, we briefly mention the continuity of $\coh(E)$ with respect to $E$. Let us recall that for any two sets $D,E \subset \H$, the distance $d_H(D,E)$ is defined as 
\[
d_H(D,E)=\max \left\{\sup_{p \in D} d_H(p,E),\ \sup_{p \in E} d_H(p,D)\right\}.
\]
We are interested in the following question: for a bounded open or closed set $E\subset \H$ and a sequence of sets $E_j\subset \H$, is it true that 
\beq\label{hull conti}
d_H(\coh(E), \coh(E_j))\to 0\quad \text{if}\quad d_H(E, E_j)\to 0
\eeq
holds?

In contrast to the Euclidean case, where the answer is affirmative, the situation in the Heisenberg group is less straightforward. In general, we cannot expect \eqref{hull conti} to hold, as indicated by the following example.

\begin{example}\label{ex2}
We slightly change the set $E$ in Example \ref{ex1} by taking 
\[
E =(\pi(0, r) \times (-\delta, 0)) \cup (\pi(0, r) \times (t,t+ \delta))
\]
with $r, t, \delta>0$. In other words, $E$ is set to be the union of two circular cylinders of height $\delta>0$. We may use the same argument as in Example \ref{ex1} to show that $E$ is h-convex if $r^2\leq 2t$ but not h-convex if $r^2>2t$. 

Consider the critical case when $r^2=2t$. Since $E$ is h-convex in this case, we have $\coh(E)=E$. On the other hand, it is not difficult to see that any $\vep$-neighborhood of $E$ (with respect to $d_H$ or other equivalent metrics), denoted by $\N_\vep(E)$, is not h-convex. Its h-convex hull $\coh(\N_\vep(E))$ at least contains horizontal segments joining the upper and lower cylinders. Hence, we can get $c>0$ such that
\[
d_H(\coh(E), \coh(\N_\vep(E))>c
\]
for all $\vep>0$. This shows that \eqref{hull conti} fails to hold in general. 
\end{example}

We have seen that the h-convex hull $\coh(E)$ is not stable with respect to perturbations of the set $E$. It is however worth emphasizing that the h-quasiconvex envelope is stable, as shown in Proposition \ref{prop stability}. The main reason for such a discrepancy is that the quasi-convexification process may fatten the level sets of functions, that is, a level set of $Q(f)$ may contain interior points while the level set of $f$ at the same level initially does not. See for example \cite{BSS, So3, Gbook} on the fattening phenomenon for the level-set formulation of geometric evolution equations in different contexts. 

We need additional assumptions to guarantee \eqref{hull conti}. One sufficient condition is the following type of star-shapedness of the set $E$. 

\begin{defi}[Strict star-shapedness]
We say that a connected set $E\subset \H$ is strictly star-shaped with respect to the group identity $0\in H$ if for any $0\leq \lambda<1$ we have $\delta_\lambda (E)\subset E$ and 
\beq\label{strict star eq}
\inf_{x\in \delta_\lambda(E)} d_H(x, \H\setminus E)>0.
\eeq 
We say $E$ is strictly star-shaped if there exists a point $p_0\in \H$ such that 
the left translation $p_0^{-1} \cdot E$ of $E$ is strictly star-shaped with respect to $0$.  
\end{defi}
It is a stronger version than the star-shapedness property studied in \cite{DF, DGS}, where all conditions except \eqref{strict star eq} are required. Note that h-convex set can be disconnected and does not imply strict or weak star-shapedness, as observed in Example \ref{ex1}. Below, we present three examples of connected sets to compare these notions. We refer the interested reader to \cite{DF} for more examples of star-shaped sets.
\begin{example}
Let us take
\begin{align*}
 E_1 &= \textstyle\bigcup_{\lambda \in [0,1]} \delta_{\lambda}((1,1,1)) \cup \delta_{\lambda}((-1,-1,1)) ,\\ 
E_2 & =(\pi(0,1)\times \{0\})\cup (\pi(0,1)\times \{t\})\cup (\{(0,0)\}\times (0,1)),\\ 
E_3 &= \{(x, 0, 0): x\in (-1, 1)\},
\end{align*}
where $\pi(0, 1)$ denotes the unit disk centered at the origin as defined in \eqref{h-pi}.  

All of these sets are bounded and connected. The set $E_1$ is strictly star-shaped with respect to the group identity but not h-convex; it is easily seen that for any $\delta_s(E_1)\subset E_1$ for all $0\leq s<1$ but the horizontal segment between $(1, 1, 1)$ and $(-1, -1, 1)$ does not stay in $E_1$. In view of Example \ref{ex1}, we see that $E_2$ is h-convex. But it is not star-shaped, since for any $p_0\in E_2$, there exists $p\in E_2$ such that the curve $\{p_0\cdot \delta_\lambda(p_0^{-1}\cdot p): \lambda\in [0, 1)\}$
contains points outside $E_2$. 
The horizontal segment $E_3$ is obviously h-convex and star-shaped with respect to every point in $E_3$. It is however not star-shaped with respect to points in $\H\setminus E_3$. It is not strictly star-shaped in $\H$ either. Indeed, for any $p_0\in E_3$, we can find $\lambda<1$ close to $1$ such that $0\in \delta_\lambda(p_0^{-1}\cdot E_3)$ and $d_H(0, \H\setminus (p_0^{-1}\cdot E_3))=0$; in other words, $p_0^{-1}\cdot E_3$ is not strictly star-shaped with respect to $0$. 
\end{example}

 Assuming strict star-shapedness, we prove stability of h-convex hull.
\begin{prop}[Stability of h-convex hulls of star-shaped sets]\label{prop stable}
Let $E\subset \H$ be a bounded open (resp., closed) set that is strictly star-shaped. Let $E_j$ be a sequence of bounded open (resp. closed) sets in $\H$ such that $d_H(E, E_j)\to 0$ as $j\to \infty$. Then there holds 
\beq\label{set stability}
d_H(\coh(E), \coh(E_j))\to 0\quad \text{as $j\to \infty$.}
\eeq
\end{prop}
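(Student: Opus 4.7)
The plan is to establish the two one-sided Hausdorff inclusions $\coh(E)\subset \mathcal{N}_\vep(\coh(E_j))$ and $\coh(E_j)\subset \mathcal{N}_\vep(\coh(E))$ (where $\mathcal{N}_\vep$ denotes the $d_H$-open $\vep$-neighborhood) for all $j$ large, combining the PDE stability of the h-quasiconvex envelope with the rigidity provided by strict star-shapedness. Since left translation preserves both $d_H$ (left-invariant) and h-convexity, we may assume the star-point is $0$, so that $E$ itself is strictly star-shaped with respect to $0$. The first key step is to transfer strict star-shapedness to $\coh(E)$: non-isotropic dilations $\delta_\lambda$ send horizontal planes to horizontal planes and hence preserve h-convexity, and combined with monotonicity of $\coh(\cdot)$ this yields $\delta_\lambda(\coh(E))=\coh(\delta_\lambda(E))\subset \coh(E)$ for $\lambda\in[0,1)$. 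The required quantitative separation $d_H(\delta_\lambda(\coh(E)),\H\setminus\coh(E))>0$ then follows from Theorem \ref{thm sep} applied to $\delta_\lambda(E)\subset E$ (yielding positive $\tilde d_H$-separation) together with the topological equivalence of $d_H$ and $\tilde d_H$ on bounded sets.

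For the inclusion $\coh(E)\subset \mathcal{N}_\vep(\coh(E_j))$, I would pass to defining functions. Choose signed-distance defining functions $f,f_j$ of $E,E_j$, extended suitably so that Theorem \ref{thm scheme bdry} applies in a large enough ball $B_R(0)$; then $\|f-f_j\|_\infty\to 0$ as $d_H(E,E_j)\to 0$, and Proposition \ref{prop stability} gives $\|Q(f)-Q(f_j)\|_\infty\to 0$. Strict star-shapedness of $E$ implies $\delta_\lambda(E)\subset\{f\leq -c_\lambda\}$ for some $c_\lambda>0$; taking h-convex hulls and applying Theorem \ref{thm hull1} to the shifted defining function $f+c_\lambda$ (noting $Q(f+c_\lambda)=Q(f)+c_\lambda$) yields
\[
\delta_\lambda(\coh(E))=\coh(\delta_\lambda(E))\subset \coh(\{f\leq -c_\lambda\})=\{Q(f)\leq -c_\lambda\}.
\]
Once $j$ is large enough that $\|Q(f)-Q(f_j)\|_\infty<c_\lambda/2$, this set lies inside $\{Q(f_j)<0\}=\coh(E_j)$. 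Since $\delta_\lambda\to \id$ uniformly on the bounded set $\coh(E)$, we also have $\coh(E)\subset \mathcal{N}_\vep(\delta_\lambda(\coh(E)))$ as soon as $\lambda$ is close enough to $1$, and chaining the two inclusions completes this direction.

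For the reverse inclusion $\coh(E_j)\subset \mathcal{N}_\vep(\coh(E))$, I would argue purely geometrically from the strict star-shapedness of $\coh(E)$, bypassing the PDE. Given $\vep>0$, pick $\mu>1$ close to $1$ so that $\delta_\mu(\coh(E))\subset \mathcal{N}_\vep(\coh(E))$, which is possible by uniform continuity of $\delta_\mu$ on the bounded set $\coh(E)$. Applying strict star-shapedness of $\coh(E)$ with $\lambda=1/\mu<1$ produces a constant $c>0$ with $\mathcal{N}_c(\delta_{1/\mu}(\coh(E)))\subset \coh(E)$; since $\delta_\mu$ scales $d_H$-distances by the factor $\mu$, applying $\delta_\mu$ to both sides yields $\mathcal{N}_{\mu c}(\coh(E))\subset \delta_\mu(\coh(E))$. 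For $j$ large enough that $d_H(E,E_j)<\mu c$, we then have $E_j\subset \mathcal{N}_{\mu c}(E)\subset \mathcal{N}_{\mu c}(\coh(E))\subset \delta_\mu(\coh(E))$, and since $\delta_\mu(\coh(E))$ is h-convex this forces $\coh(E_j)\subset \delta_\mu(\coh(E))\subset \mathcal{N}_\vep(\coh(E))$, as required.

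The main obstacle is the first direction: unlike the Euclidean case, Hausdorff convergence $d_H(E,E_j)\to 0$ does \emph{not} imply the eventual inclusion $\delta_\lambda(E)\subset E_j$ (even for nice open sets $E_j$ obtained by removing a point from $E$), so it is essential to pass through the envelopes $Q(f)$ and $Q(f_j)$ and exploit the uniform stability of Proposition \ref{prop stability}. The closed-set case follows by the same argument after the standard adjustments: the level-set characterization from Theorem \ref{thm hull1} is used with $\{Q(\cdot)\leq 0\}$ in place of $\{Q(\cdot)<0\}$, and the strict inequalities for the level sets of $Q(f)$ are replaced by non-strict ones throughout.
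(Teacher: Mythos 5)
Your reverse inclusion $\coh(E_j)\subset\N_\vep(\coh(E))$ is sound, and the paper obtains it even more directly: from $E_j\subset\N_{\vep}(E)\subset\delta_{1/\lambda}(E)$ (the last inclusion being exactly your scaling computation, applied to $E$ rather than to its hull) one takes hulls and uses $\coh(\delta_{1/\lambda}(E))=\delta_{1/\lambda}(\coh(E))$, so your detour through Theorem \ref{thm sep} and the $\tilde d_H$/$d_H$ comparison to transfer strict star-shapedness to $\coh(E)$ is avoidable. The identity $\coh(\delta_\lambda E)=\delta_\lambda(\coh(E))$ is precisely the paper's key observation, and its entire proof is the geometric sandwich $\delta_\lambda(\coh(E))\subset\coh(E_j)\subset\delta_{1/\lambda}(\coh(E))$ followed by letting $\lambda\to 1$.

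The first direction, however, has a genuine gap. You correctly observe that $d_H(E,E_j)\to 0$ does not force $\delta_\lambda(E)\subset E_j$, but the proposed repair fails for the same underlying reason: Hausdorff convergence does not yield $\sup_\H|f-f_j|\to 0$ for signed-distance defining functions, nor for any other choice of continuous defining functions. Take $E=B_1(0)$ and $E_j=E\setminus\overline{B_{1/j}(q_0)}$ for a fixed interior point $q_0$; then $E_j$ is bounded and open, $d_H(E,E_j)\to 0$, yet every continuous $f_j$ with $\{f_j<0\}=E_j$ must satisfy $f_j(q_0)\geq 0$, while $f(q_0)$ is a fixed negative number. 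Hence $\sup_\H|f-f_j|$ is bounded away from zero and Proposition \ref{prop stability} gives no information; the entire chain $\delta_\lambda(\coh(E))\subset\{Q(f)\leq -c_\lambda\}\subset\{Q(f_j)<0\}$ collapses at its second link. (The conclusion may well still hold in such examples because convexification fills the hole, but that is essentially what needs to be proved and your argument does not deliver it.) For the record, the paper's own proof simply asserts $\delta_\lambda(E)\subset E_j$ for large $j$ ``owing to the strict star-shapedness condition,'' i.e.\ it takes for granted exactly the step you flagged as false; your skepticism there is well placed, but the substitute you propose does not close the gap, and as written the inclusion $\coh(E)\subset\N_\vep(\coh(E_j))$ remains unproved.
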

\begin{proof}
By definition of h-convex hull and $\delta_\lambda (p)\in \H_{\delta_\lambda (q)}$ provided $p\in \mathbb{H}_q$, it is not difficult to verify that 
\beq\label{starshape eq1}
\coh(\delta_\lambda E)=\delta_\lambda (\coh(E))
\eeq
for any $\lambda> 0$. 

Owing to the strict star-shapedness condition on $E$, for each fixed $0<\lambda<1$ we have 
$\delta_\lambda (E)\subset E_j$
when $j\geq 1$ is sufficiently large. It then follows that $\coh(\delta_\lambda (E))\subset \coh( E_j)$.
By \eqref{starshape eq1}, we get $\delta_\lambda (\coh(E))\subset \coh(E_j)$ for all $j\geq 1$ large. 

Since the strict star-shapedness also implies $E\subset \delta_{1/\lambda} (E)$ and 
\[
d_H(E, \delta_{1/\lambda} (E))>0
\]
for any $0<\lambda<1$, we can use a symmetric argument to show that 
$\coh(E_j)\subset \delta_{1/\lambda} (\coh(E))$ 
for $j\geq 1$ large. Hence, $\delta_{\lambda} (\coh(E))\subset \coh(E_j)\subset \delta_{1/\lambda} (\coh(E))$ and $\delta_{\lambda} (\coh(E))\subset \coh(E)\subset \delta_{1/\lambda} (\coh(E))$ imply that
\[
d_H(\coh(E), \coh(E_j)) \leq d_H\left(\delta_\lambda (\coh(E)), \delta_{1/\lambda}(\coh(E))\right)
\]
for $j\geq 1$ large. Letting $j\to \infty$ and then $\lambda\to 1$ yields \eqref{set stability}.
\end{proof}
The strict star-shapedness is only a sufficient condition to guarantee the stability of h-convex hull. It would be interesting to find a sufficient and necessary condition for this property. 

\subsection*{Acknowledgments}
The authors are grateful to the anonymous referees for valuable comments, especially to one of the referees for pointing out useful references that help us prove Lemma \ref{rmk convex boundary} and improve the result in Theorem \ref{thm comp2}.
The work of the second author was supported by JSPS Grants-in-Aid for Scientific Research (No.~19K03574, No.~22K03396) and by funding (Grant No. 205004) from Fukuoka University. The work of the third author was supported by JSPS Grant-in-Aid for Research Activity Start-up (No.~20K22315) and JSPS 	
Grant-in-Aid for Early-Career Scientists (No.~22K13947).

\bibliographystyle{abbrv}%

\end{document}